\newtheorem{theorem}{Theorem}[section]
\newtheorem{definition}{Definition}[section]
\newtheorem{lemma}{Lemma}[section]
\newtheorem{remark}{Remark}[section]
\newtheorem{corollary}{Corollary}[section]
\numberwithin{equation}{section}
\numberwithin{figure}{section}
\makeatletter \@addtoreset{equation}{section} \makeatother
\newcommand{\RR}{\mathbb{R}}
\newcommand{\R}{\mathbb{R}}
\newcommand{\supp}{{\mathrm {supp}}}
\newcommand{\beq}{\begin{equation}}
\newcommand{\eeq}{\end{equation}}
\newcommand{\dd}{{\rm d}}
\begin{document}
\title[Vanishing Viscosity Limit of the Compressible Navier-Stokes Equations]{Vanishing Viscosity Limit
of the Three-Dimensional Barotropic Compressible Navier-Stokes Equations with Degenerate
Viscosities and Far-Field Vacuum}

\author{Geng Chen}
\address{Geng Chen:\, School of Mathematics, University of Kansas, Lawrence, KS 66045, USA.}
\email{\tt gengchen@ku.edu}

\author{Gui-Qiang G. Chen}
\address{Gui-Qiang G. Chen:\, Mathematical Institute,
University of Oxford, Oxford,  OX2 6GG, UK.}
\email{\tt chengq@maths.ox.ac.uk}

\author{Shengguo Zhu}
\address{Shengguo Zhu:\, School of Mathematical Sciences, and MOE-LSC, Shanghai Jiao Tong University, Shanghai, 200240, China;
Mathematical Institute, University of Oxford,  Oxford, OX2 6GG, UK.}
\email{\tt zhushengguo@sjtu.edu.ck}

\keywords{
Compressible Navier-Stokes equations, vanishing viscosity limit,
three dimensions, far-field vacuum, degenerate viscosity, regular solutions,
 degenerate parabolicity}

\subjclass[2010]{35Q30, 35B40, 35A09, 35D35, 35D40.}
\date{\today}

\begin{abstract}
We are concerned with the inviscid limit of the Navier-Stokes equations to
the Euler equations for barotropic compressible
fluids in $\RR^3$.
When the viscosity coefficients obey a lower power-law of the density
({\it i.e.}, $\rho^\delta$ with $0<\delta<1$),
we identify a  {\it quasi-symmetric hyperbolic--singular elliptic coupled structure}
of the Navier-Stokes equations to control the behavior of the velocity of the fluids
near a vacuum.
Then this structure is employed to prove that there exists a unique regular solution
to the corresponding Cauchy problem
with arbitrarily large initial data and far-field vacuum,
whose life span is uniformly positive in the vanishing viscosity limit.
Some uniform estimates on both the local sound speed and
the velocity in $H^3(\mathbb{R}^3)$ with respect to the viscosity coefficients
are also obtained, which lead to the strong convergence of the regular solutions of the Navier-Stokes equations
with finite mass and energy to the corresponding regular solutions of the Euler equations
in $L^{\infty}([0, T]; H^{s}_{\rm loc}(\mathbb{R}^3))$ for any $s\in [2, 3)$.
As a consequence,
we show that, for both viscous and inviscid flows, it is impossible that
the $L^\infty$ norm of any global regular solution with vacuum  decays  to zero asymptotically,
as $t$ tends to infinity.
Our framework developed here is applicable to the same problem for the other physical
dimensions via
some minor modifications.
\end{abstract}
\maketitle

\tableofcontents

\section{Introduction}
The time evolution of the density $\rho^\epsilon \geq 0$ and velocity
$u^\epsilon=\left((u^\epsilon)^{(1)},(u^\epsilon)^{(2)},(u^\epsilon)^{(3)}\right)^\top$
$\in \mathbb{R}^3$  of a general viscous barotropic
compressible  fluid in $ \mathbb{R}^3$ is governed by the following
compressible Navier-Stokes equations (\textbf{CNS}):
\begin{equation}
\label{eq:1.1}
\begin{cases}
\rho^\epsilon_t+\text{div}(\rho^\epsilon u^\epsilon)=0,\\[2mm]
(\rho^\epsilon u^\epsilon)_t+\text{div}(\rho^\epsilon u^\epsilon\otimes u^\epsilon)
  +\nabla
   p(\rho^\epsilon) =\epsilon\,\text{div} \mathbb{T}(\rho^\epsilon, \nabla u^\epsilon),
\end{cases}
\end{equation}
where $\epsilon>0$ is a constant measuring the strength of viscosity which is assumed  to be $\epsilon\in (0,1]$
without loss of generality,
and $x=(x_1,x_2,x_3)\in \mathbb{R}^3$ and $t\geq 0$ are the space and time variables, respectively.
For polytropic gases, the constitutive relation is given by
\begin{equation}
\label{eq:1.2}
p(\rho)=A \rho^{\gamma} \qquad \mbox{with $A>0$ and $\gamma> 1$},
\end{equation}
where $A$ is an entropy constant and  $\gamma$ is the adiabatic exponent.
The viscous stress tensor $\mathbb{T}(\rho, \nabla u)$ has the form:
\begin{equation}
\label{eq:1.3}
\mathbb{T}(\rho, \nabla u)
=\mu(\rho)\big(\nabla u+(\nabla u)^\top\big)+\lambda(\rho)\text{div} u\,\mathbb{I}_3,
\end{equation}
where $\mathbb{I}_3$ is the $3\times 3$ identity matrix,
\begin{equation}
\label{fandan}
\mu(\rho)= \alpha \rho^\delta,\qquad \lambda(\rho)= \beta  \rho^\delta
\end{equation}
for some constant $\delta>0$,
$\mu(\rho)$ is the shear viscosity coefficient,
$\lambda(\rho)+\frac{2}{3}\mu(\rho)$ is the bulk viscosity coefficient,
$\alpha$ and $\beta$ are both constants satisfying
 \begin{equation}\label{10000}
 \alpha>0, \qquad \alpha+\beta\geq 0.
 \end{equation}

Formally, when $\epsilon=0$, the Navier-Stokes equations \eqref{eq:1.1}
reduce to the compressible Euler equations for inviscid flow:
\begin{equation}
\label{eq:1.1E}
\begin{cases}
\displaystyle
\rho_t+\text{div}(\rho u)=0,\\[2mm]
\displaystyle
(\rho u)_t+\text{div}(\rho u\otimes u)
  +\nabla p(\rho) =0,
\end{cases}
\end{equation}
where $\rho$ and $u$ are the mass density and velocity of the inviscid fluid.

In this paper, we focus on the lower power-law case $\delta\in (0, 1)$ to analyze
the asymptotic behavior of the smooth solution $(\rho^\epsilon,u^\epsilon)$ with finite mass
and energy as $\epsilon \rightarrow 0$
for the Cauchy problem \eqref{eq:1.1}--\eqref{10000} with the following initial data and far-field vacuum:
\begin{align}
&(\rho^\epsilon,u^\epsilon)|_{t=0}=(\rho^\epsilon_0,  u^\epsilon_0)(x)
   &&\mbox{with $\rho_0^\epsilon(x)>0$ for $x\in \mathbb{R}^3$}, \label{initial}\\
&(\rho^\epsilon_0,u^\epsilon_0)(x)\rightarrow  (0,0)  &&\text{as $|x|\rightarrow \infty$}.\label{far}
\end{align}
Our results show that the inviscid flow (\ref{eq:1.1E}) can be regarded as
the viscous flow  \eqref{eq:1.1} with vanishing viscosity for the regular solutions
with far-field vacuum in the sense of Definition \ref{d1}.
The far-field behavior
\begin{align}
(\rho^\epsilon,u^\epsilon)(t,x)\rightarrow  (0,0) \quad\,\, \text{as $|x|\rightarrow \infty$ for $t\ge 0$}
\end{align}
is natural when some physical requirements are imposed,
for example, when the total mass is finite in $\mathbb{R}^3$.

In the theory of gas dynamics, system \eqref{eq:1.1} may be derived from the Boltzmann equation
through the Chapman-Enskog expansion; see Chapman-Cowling \cite{chap} and Li-Qin  \cite{tlt}.
For some physical situations, the viscosity coefficients $\mu$ and $\lambda$
and the heat conductivity coefficient $\kappa$
are functions of the absolute temperature $\theta$ ({\it cf.} \cite{chap}) such as
\begin{equation}
\label{eq:1.5g}
\mu(\theta)= a_1 \theta^{\frac{1}{2}}F(\theta),\quad\,  \lambda(\theta)=a_2 \theta^{\frac{1}{2}}F(\theta),
\quad\, \kappa(\theta)=a_3 \theta^{\frac{1}{2}}F(\theta)
\end{equation}
for some constants $a_i, i=1,2,3$.
In fact, for the cut-off inverse power force model,
if the intermolecular potential varies as $r^{-a}$, then
$$
F(\theta)=\theta^b\qquad  \text{with $b=\frac{2}{a} \in [0,\infty)$},
$$
where $ r$ is the intermolecular distance.
In particular, $a=1$ and $b=2$ for the ionized gas,
$a = 4$  and $b=\frac{1}{2}$ for Maxwellian molecules,
and $a=\infty$ and  $b=0$ for rigid elastic spherical molecules; see \cite[\S 10]{chap}.
As a typical example where $F$ is not a power function of $\theta$, the Sutherland's model is well known:
\begin{equation}\label{sutherland}
F(\theta)=\frac{\theta}{\theta+s_0},
\end{equation}
where $s_0>0$ is the Sutherland constant.
According to Liu-Xin-Yang \cite{taiping},
for barotropic and polytropic fluids,
such a dependence is inherited through the laws of Boyle and Gay-Lussac:
$$
p=R\rho\theta =A \rho^\gamma  \qquad \text{for constant $R>0$};
$$
that is, $\theta=AR^{-1}\rho^{\gamma-1}$ so that the viscosity coefficients are functions
of the density.
For most physical processes, $\gamma$ is in interval $(1,3)$,
which implies that $\delta\in (0,1)$ for the rigid elastic spherical molecules.
In this sense, the lower power-law case is the most physically relevant
for the degenerate viscous flow. In this paper, we focus on this case.
In fact, similar models with density dependent viscosity coefficients arise from various physical situations,
such as the Korteweg system, the shallow water equations, the lake equations,
and the quantum Navier-Stokes systems ({\it cf}. \cites{bd6,bd3, bd2, bd, BN1, BN2, ansgar,decayd}).

Another motivation of our study is that the mathematical structure of system \eqref{eq:1.1}
is an excellent prototype
of nonlinear degenerate systems of partial differential equations,
since the equations on the time evolution of the fluid velocity
become very singular near the far-field due to the decay of the density
at a certain rate, which will be further discussed later.

\subsection{Well-posedness for compressible flow with vacuum}
We first recall some related  frameworks on the well-posedness for
strong solutions with vacuum
of the Cauchy problem of the hydrodynamics equations mentioned above.
For the inviscid flow, in Makino-Ukai-Kawashima \cite{tms1},
the local sound speed $c$ was first introduced
to rewrite the system in a symmetric hyperbolic form,
and the local existence of the unique regular solution
of the compressible full Euler equations with vacuum was established;
see also Makino-Perthame \cite{mp} for the Euler-Poisson equations.
For the isentropic flow in $\mathbb{R}^3$, the sound speed $c$ is defined by
\begin{equation}\label{sonic-speed}
c:=\sqrt{p'(\rho)}=\sqrt{A\gamma} \rho^{\frac{\gamma-1}{2}}.
\end{equation}
Then the result in \cite{tms1} can be read as follows:

\begin{theorem}[\cite{tms1}]\label{thmakio}
Let $\gamma >1$. For the Cauchy problem \eqref{eq:1.1E} with
\begin{align}
(\rho,u)|_{t=0}=(\rho_0, u_0)(x)  \qquad  \text{for $x\in \mathbb{R}^3$},\label{winitial}
\end{align}
if the initial data $( \rho_0, u_0)$ satisfy
\begin{equation}\label{th78asd}
\rho_0(x)\geq 0,\qquad \big(c_0, u_0\big)\in
H^3(\mathbb{R}^3),
\end{equation}
then there exist $T_0>0$ and a unique regular solution
$(\rho, u)$ of the Cauchy problem \eqref{eq:1.1E} with
\eqref{winitial}
satisfying
\begin{equation}\label{reg11asd}
(c,u) \in C([0,T_0];H^3(\mathbb{R}^3)),\qquad (c_t,u_t) \in C([0,T_0];H^2(\mathbb{R}^3)),
\end{equation}
where the regular solution $(\rho, u)$  is defined in the following sense{\rm :}
\begin{enumerate}
\item[\rm (i)]  $(\rho,u)$ satisfies \eqref{eq:1.1E} with
\eqref{winitial}
in the sense of  distributions,

\smallskip
\item[\rm (ii)] $\rho\geq 0$ and $\big(c, u\big)\in C^1([0,T_0]\times \mathbb{R}^3)$,

\smallskip
\item[\rm (iii)] $u_t+u\cdot\nabla u =0$ when $\rho(t,x)=0$.
\end{enumerate}
\end{theorem}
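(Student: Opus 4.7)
The plan is to follow the Makino--Ukai--Kawashima symmetrization strategy. The Euler system \eqref{eq:1.1E} is not symmetric hyperbolic in the variables $(\rho,u)$ near a vacuum, since the momentum equation contains $\frac{1}{\rho}\nabla p(\rho)=A\gamma\rho^{\gamma-2}\nabla\rho$, which is singular where $\rho$ vanishes when $\gamma<2$. First I would introduce the modified sound speed $\psi:=\frac{2}{\gamma-1}c$ with $c=\sqrt{A\gamma}\,\rho^{(\gamma-1)/2}$. A direct computation, starting from \eqref{eq:1.1E} in nonconservative form and multiplying the continuity equation by $\sqrt{A\gamma}\,\rho^{(\gamma-3)/2}$, shows that $(\psi,u)$ formally satisfies
\begin{equation}
\label{eq:plan-symm}
\begin{cases}
\psi_t+(u\cdot\nabla)\psi+\tfrac{\gamma-1}{2}\psi\,\text{div}\,u=0,\\[1mm]
u_t+(u\cdot\nabla)u+\tfrac{\gamma-1}{2}\psi\,\nabla\psi=0,
\end{cases}
\end{equation}
which, when written as $V_t+\sum_{j=1}^3A_j(V)\,\partial_{x_j}V=0$ for $V=(\psi,u)^\top$, has \emph{globally smooth symmetric} coefficient matrices $A_j(V)$; the troublesome factor $1/\rho$ has been absorbed into the change of variables.

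Since $(c_0,u_0)\in H^3(\R^3)$ implies $(\psi_0,u_0)\in H^3(\R^3)$, I would next apply the standard local existence and uniqueness theory for symmetric hyperbolic systems in Sobolev spaces (in the spirit of Kato and Majda) to produce a unique solution $(\psi,u)\in C([0,T_0];H^3(\R^3))\cap C^1([0,T_0];H^2(\R^3))$ of \eqref{eq:plan-symm} for some $T_0>0$ depending only on $\|(\psi_0,u_0)\|_{H^3}$. Since $\psi$ is a constant positive multiple of $c$, this immediately yields \eqref{reg11asd}, and the Sobolev embedding $H^3(\R^3)\hookrightarrow C^1(\R^3)$ delivers $(c,u)\in C^1([0,T_0]\times\R^3)$, giving condition (ii) of the regular solution.

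It remains to recover $\rho=(A\gamma)^{-1/(\gamma-1)}c^{2/(\gamma-1)}\ge 0$ and to verify that $(\rho,u)$ solves \eqref{eq:1.1E} distributionally together with condition (iii). On the open set $\{\rho>0\}$, the equivalence of \eqref{eq:1.1E} and \eqref{eq:plan-symm} is a direct algebraic calculation based on $p'(\rho)=c^2$ and the identity $\tfrac{1}{\rho}\nabla p(\rho)=\tfrac{\gamma-1}{2}\psi\nabla\psi$. Across the vacuum set $\{\rho=0\}=\{\psi=0\}$, the continuity of $\rho$ in terms of $c\in C^1$ (here $\gamma>1$ ensures $c^{2/(\gamma-1)}$ is continuous up to the boundary) allows the identities to be extended by continuity, delivering (i); at such a point, $\psi=0$ forces the term $\tfrac{\gamma-1}{2}\psi\nabla\psi$ in \eqref{eq:plan-symm} to vanish regardless of the value of $\nabla\psi$, which is exactly condition (iii). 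Uniqueness of $(\rho,u)$ is then inherited from that of $(\psi,u)$.

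The main obstacle, and the reason the statement is phrased in terms of $c$ rather than $\rho$, is the non-smoothness of the inverse map $c\mapsto\rho=\text{const}\cdot c^{2/(\gamma-1)}$ at $c=0$ for non-integer exponents: one cannot expect $\rho$ itself to carry the same Sobolev regularity as $c$ when a vacuum is present. Adopting $c$ (equivalently $\psi$) as the working unknown is precisely what converts the degenerate Euler system into a genuine symmetric hyperbolic system on all of $\R^3$, including across $\{\rho=0\}$, and thereby reduces the existence question to classical theory; this same symmetrization device is what the rest of the paper must replace or adapt when the viscous perturbation $\epsilon\,\mathrm{div}\,\mathbb{T}(\rho,\nabla u)$ is reintroduced.
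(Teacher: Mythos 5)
Your sketch is essentially the argument of the cited reference \cite{tms1}: the paper itself gives no proof of this theorem (it is quoted from Makino--Ukai--Kawashima), and the symmetrization via the sound speed $c=\sqrt{A\gamma}\,\rho^{(\gamma-1)/2}$ (equivalently $\psi=\tfrac{2}{\gamma-1}c$) into a symmetric hyperbolic system with the matrices you write is exactly the device used there and reproduced in the paper's own matrices $A_0,A_j$ in \S 3. The one step you should not leave to ``extension by continuity'' is the verification of the conservation-form equations across $\{\rho=0\}$ when $\gamma\ge 3$, where $\rho=\mathrm{const}\cdot c^{2/(\gamma-1)}$ is only H\"older in $c$; this is handled in \cite{tms1} by a direct distributional computation, but your overall approach is the intended one.
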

The corresponding global well-posedness of smooth solutions with small density but possibly large velocity
in some homogeneous Sobolev spaces was proved in Grassin-Serre \cites{MG, serre}.
Chen-Chen-Zhu \cite{Geng1} pinpointed the necessary and sufficient condition
for the formation of singularities of $C^1$ solutions with large initial data allowing
a far-field vacuum for the one-dimensional space:
there exists a compression in the initial data.

For the compressible viscous flow away from a vacuum, the local existence and uniqueness
of classical solutions are known in Nash \cite{nash} and Serrin \cite{serrin1}.
However, if the initial density does not have a strictly positive lower bound,
the arguments used in \cites{nash, serrin1} cannot apply to system $(\ref{eq:1.1})$, owing to the degeneracy
caused by the vacuum or the decay of the density in the far-field  in the momentum equations $(\ref{eq:1.1})_2$
such as
$$
\displaystyle
\underbrace{\rho^\epsilon(u^\epsilon_t+u^\epsilon\cdot \nabla u^\epsilon)}_{Degeneracy \ of  \ time \ evolution}
+\,\nabla p(\rho^\epsilon)
=\epsilon \underbrace{\text{div}\big(2\mu(\rho^\epsilon)D(u^\epsilon)
  +\lambda(\rho^\epsilon)\text{div} u^\epsilon \mathbb{I}_3\big)}_{Degeneracy\ of \ viscosities},
$$
where $D(u)=\frac{1}{2}\big(\nabla u +(\nabla u)^\top\big)$.
For the constant viscous flow ({\it i.e.}, $\delta=0$ in (\ref{fandan})),
in order to establish the local well-posedness of strong solutions with vacuum in $\mathbb{R}^3$,
a remedy was suggested  by Cho-Choe-Kim \cite{CK3} for dealing with the degeneracy of time evolution,
where they imposed  initially a {\it compatibility condition}:
\begin{equation*}
\nabla p(\rho^\epsilon_0)-\text{div} \mathbb{T}_{0} =\sqrt{\rho^\epsilon_{0}} g
\qquad\,\, \text{for some $g\in L^2(\mathbb{R}^3)$}.
\end{equation*}
Later, based on the uniform estimate of the upper bound of the density,
Huang-Li-Xin \cite{HX1} extended this solution to be a global one with small energy for barotropic flow
in $\mathbb{R}^3$.

For the degenerate viscous flow ({\it i.e.}, $\delta>0$ in (\ref{fandan})),
the strong degeneracy of the momentum equations in  \eqref{eq:1.1}  near the vacuum causes new difficulties
for the mathematical analysis of this system.
Based on the B-D entropy introduced by Bresch-Desjardins \cites{bd6,bd3},
some significant results on the weak solutions of the isentropic \textbf{CNS} or  related models
whose viscosity coefficients satisfy the B-D relation
have been obtained; see \cites{bd2,bd,lz,vassu,vayu}.
However, owing to the degenerate mathematical structure and lack of smooth effects on the solutions
when a vacuum appears, many fundamental questions remain open,
including the uniform estimate of the life span of the corresponding strong solutions with respect
to $\epsilon$, the identification of the classes of initial data that
either cause the finite time blow-up or provide the global existence
of the strong solutions, and the well-posedness of solutions with vacuum for system \eqref{eq:1.1} without the
B-D relation for the viscosity coefficients.

In fact, in \cites{CK3,HX1}, the uniform ellipticity of the Lam\'e operator $L$ defined by
\begin{equation*}
Lu := -\alpha \triangle u-(\alpha+\beta)  \nabla \text{div} u
\end{equation*}
plays an essential role in improving the regularity of  $u^\epsilon$, which can be shown as
\begin{equation}\label{c-33}
\big\|\nabla^{k+2}u^\epsilon\big\|_{L^2(\mathbb{R}^3)}
\leq C\epsilon^{-1} \big\|\nabla^k(\rho^\epsilon u^\epsilon_t+\rho^\epsilon u^\epsilon\cdot \nabla u^\epsilon
+\nabla p(\rho^\epsilon))\big\|_{L^2(\mathbb{R}^3)}
\end{equation}
for  $k=0, 1, 2$,
and  some constant $C>0$ independent of $\epsilon$.
However, when $\delta>0$ in  (\ref{fandan}),
the viscosity coefficients approach zero continuously near the vacuum.
This degeneracy makes it difficult to adapt the elliptic approach (\ref{c-33})
to the present case.
Moreover, we need to pay additional attention to deal with the strong nonlinearity
of the variable coefficients of the viscous term due to $\delta>0$,
which is another crucial issue owing to the appearance of a vacuum or the density decay in the far-field.

Recently, when $0<\delta<1$, under the initial compatibility conditions:
\begin{equation}\label{th78zx}
\displaystyle
  \nabla u^\epsilon_0=(\rho^\epsilon_0)^{\frac{1-\delta}{2}} g_1,
   \quad \   \ \  Lu^\epsilon_0= (\rho^\epsilon_0)^{1-\delta}g_2,\quad \ \
 \nabla \big((\rho^\epsilon_0)^{\delta-1}Lu^\epsilon_0\big)=(\rho^\epsilon_0)^{\frac{1-\delta}{2}}g_3
\end{equation}
for some $(g_1,g_2,g_3)\in L^2(\mathbb{R}^3)$, the existence of the unique local classical solution
with far-field vacuum to (\ref{eq:1.1}) in $\R^3$ was established in Xin-Zhu \cite{zz2}
by introducing an elaborate singular elliptic approach
on the two operators $Lu$ and $L(\rho^{\delta-1}u)$.
For the case $\delta=1$,  in Li-Pan-Zhu \cite{sz3}, it was first observed that the degeneracies
of the time evolution and viscosities can be transferred to the possible singularity
of the term  $\nabla \log \rho \cdot \nabla u$.
Then, by establishing some uniform estimates of $\nabla \log \rho^\epsilon$ in $L^6(\mathbb{R}^3)$
and $\nabla^2 \log \rho^\epsilon$ in $H^1(\mathbb{R}^3)$ with respect to the lower bound of the initial density,
the existence of the unique local classical solution with far-field vacuum of system \eqref{eq:1.1} in $\R^2$ was obtained,
which also applies to the two-dimensional shallow water equations.
Later, by introducing some hyperbolic approach and making a full use of weak smooth effect
on the solutions of system (\ref{eq:1.1})
to establish some weighted estimates on the highest order term $\nabla^4u^\epsilon$,
the existence of three-dimensional local classical solutions was obtained in \cite{sz333} when
$1<\delta\leq \min\{3, \frac{\gamma+1}{2}\}$,
and the corresponding global well-posedness in some homogeneous Sobolev spaces  was established by Xin-Zhu \cite{zz}
under some initial smallness assumptions.
See also \cites{cons, Has, hailiang, hyp, sz2, zyj, tyc2, szhu, sz34} for other related results.

\subsection{Vanishing viscosity limit}
Based on the well-posedness theory mentioned above,
a natural followup question is to understand the relation between the regular solutions of inviscid flow in \cite{tms1}
and those of viscous flow for $\delta\geq 0$ in \cites{CK3, sz3, sz333, zz,zz2} with vanishing physical viscosities,
especially for the hard sphere model when $0<\delta<1$.

There is a considerable literature
on the uniform bounds and the vanishing viscosity limit in the whole space.
The idea of regarding inviscid flow as viscous flow with vanishing physical viscosity
dates back to Hugoniot \cite{H1}, Rankine \cite{R1}, Rayleigh \cite{R2}, and Stokes \cite{stoke};
see Dafermos \cite{guy}.
However, it was only in 1951 that Gilbarg \cite{DG} gave the first rigorous convergence analysis
of vanishing physical viscosity limit from the Navier-Stokes equations (1.1)
to the isentropic Euler equations (\ref{eq:1.1E}),
and established the mathematical existence and vanishing viscous limit of the Navier-Stokes shock layers.
The framework on the convergence analysis of piecewise smooth solutions was established
by G$\grave{\text{u}}$es-M$\acute{\text{e}}$tivier-Williams-Zumbrun
\cite{OG}, Hoff-Liu \cite{HL}, and the references cited therein.
Klainerman-Majda \cite{KM} established the convergence of smooth solutions of the Navier-Stokes equations
to solutions of  the Euler equations in $H^s(\mathbb{R}^d)$ for $s>\frac{d}{2}+1$.
In 2009, combining the uniform energy estimates with compactness compensated arguments,
Chen-Perepelitsa \cite{chen4}
provided the first rigorous proof of the vanishing physical viscosity limit
of solutions of the one-dimensional Navier-Stokes equations to a finite-energy entropy solution
of the isentropic Euler equations with relative finite-energy initial data.
Some further results can be found  in Bianchini-Bressan \cite{bianbu} for strictly hyperbolic systems with artificial viscosity,
Chen-Perepelitsa \cite{chen3} and Chen-Schrecker \cite{chen5}  for the spherically symmetric case with artificial viscosity,
as well as Huang-Pan-Wang-Wang-Zhai \cite{HPWWZ} and Germain-LeFloch \cite{Germain}.

However, even in the one-dimensional case, owing to the complex mathematical structure
of the hydrodynamics equations near a vacuum,
the existence results for strong solutions for the viscous and inviscid cases are often established in totally different frameworks,
such as \cites{CK3,sz3,sz333, zz2} for viscous flow and \cites{Geng1, tms1} for inviscid flow.
In fact, the arguments used in \cites{CK3, HX1, sz3,sz333,zz2} essentially rely on the uniform ellipticity
of the Lam\'{e} operator or some related elliptic operators,
and both the desired {\it a priori} estimates of the solutions
and the life spans $T^{\epsilon}$
depend strictly on the real physical viscosities.
For example,
\begin{itemize}
\item  when $\delta=0$ in \cites{CK3,HX1}, $T^{\epsilon}=O(\epsilon)$ and (\ref{c-33});

\smallskip
\item
when $0<\delta<1$  in \cite{zz2}, $T^{\epsilon}=O(\epsilon)$ and
\begin{equation*}
\begin{split}
\displaystyle
&\big\|\nabla^{k+2}\big((\rho^\epsilon)^{\delta-1}  u^\epsilon\big)\big\|_{L^2(\mathbb{R}^3)} \\
& \leq  C\epsilon^{-1}\big\|\nabla^k\big(u^\epsilon_t+ u^\epsilon\cdot \nabla u^\epsilon
  +A\gamma (\rho^\epsilon)^{\gamma-2}\nabla \rho^\epsilon\big)\big\|_{L^2(\mathbb{R}^3)}\\[2pt]
&\quad +C\big(\big\|\nabla^k\big(\nabla (\rho^\epsilon)^{\delta-1}\nabla u^\epsilon\big)\big\|_{L^2(\mathbb{R}^3)}
   +\big\|\nabla^k\big(\nabla^2 (\rho^\epsilon)^{\delta-1}u^\epsilon\big)\big\|_{L^2(\mathbb{R}^3)}\big)
   \qquad \text{for $k=0,1$},\\
\displaystyle
& \big\|\nabla^{2}\big((\rho^\epsilon)^{\delta-1}   \nabla^2 u^\epsilon\big)\big\|_{L^2(\mathbb{R}^3)}\\
&\leq C\epsilon^{-1} \big\|(\rho^\epsilon)^{\delta-1}
  \nabla^2\big((\rho^\epsilon)^{1-\delta}(u^\epsilon_t+ u^\epsilon\cdot \nabla u^\epsilon)
     + (\rho^\epsilon)^{-\delta}\nabla p(\rho^\epsilon)\big)\big\|_{L^2(\mathbb{R}^3)}\\[2pt]
&\quad +C\big\|(\rho^\epsilon)^{\delta-1}\nabla^2(\nabla (\log \rho^\epsilon) \, \nabla u^\epsilon)\big\|_{L^2(\mathbb{R}^3)}\\[2pt]
&\quad +C\big(\big\|\nabla  (\rho^\epsilon)^{\delta-1} \nabla^3 u^\epsilon\big\|_{L^2(\mathbb{R}^3)}
    +\big\|\nabla^2  (\rho^\epsilon)^{\delta-1} \nabla^2 u^\epsilon\big\|_{L^2(\mathbb{R}^3)}\big);\\
\end{split}
\end{equation*}

\item when $\delta=1$   in \cite{sz3},  $T^{\epsilon}=O(\epsilon)$, and
\begin{equation*}\begin{split}
\displaystyle
\big\|\nabla^{k+2}u^\epsilon\big\|_{L^2(\mathbb{R}^2)}
\leq &\, C\epsilon^{-1} \big\|\nabla^k\big(u^\epsilon_t+ u^\epsilon\cdot \nabla u^\epsilon+2\nabla \rho^\epsilon\big)\big\|_{L^2(\mathbb{R}^2)}\\
  &\,+C\big\|\nabla^k\big(\nabla (\log \rho^\epsilon)\,\nabla u^\epsilon\big)\big\|_{L^2(\mathbb{R}^2)}\qquad \mbox{for $k=0, 1, 2$}.
\end{split}
\end{equation*}
\end{itemize}
These indicate that the existing frameworks do not seem to work for the vanishing viscosity limit problem.
Thus, in order to study  the vanishing viscosity limit of strong solutions in the whole space
from the Navier-Stokes to the Euler equations for compressible flow with initial vacuum in some open set
or at the far-field, new ideas are required.

As far as we know, there are only a few results on the inviscid limit problem
for the multidimensional compressible viscous flow with vacuum.
Recently, for the case $\delta=1$ in (\ref{fandan}),
by using the following structure to control the behavior of the fluids velocity $u^\epsilon$:
\begin{equation}\label{zhenji}
u^\epsilon_t+u^\epsilon\cdot\nabla u^\epsilon +\frac{2}{\gamma-1}c^\epsilon\nabla c^\epsilon
+\epsilon Lu^\epsilon=\epsilon \nabla(\log \rho^\epsilon) \, Q(u^\epsilon)
\end{equation}
with
$Q(u)=2\alpha D(u)+\beta\text{div} u\,\mathbb{I}_2$,
the following uniform estimate was obtained in Ding-Zhu \cite{ding}:
\begin{equation*}
\begin{split}
&\sup_{0\leq t\leq T_*}\left(\big\|\left(c^\epsilon, u^\epsilon\right)(t,\cdot)\big\|^2_{H^3(\mathbb{R}^2)}
  +\big\|\nabla \log \rho^\epsilon(t,\cdot)\big\|^2_{H^1(\mathbb{R}^2)}
+\epsilon \big\|\nabla^3 \log \rho^\epsilon(t,\cdot)\big\|^2_{L^2(\mathbb{R}^2)}\right)\\
&\,\,\, +\epsilon
\int^{T_*}_0\big\|\nabla^4 u^\epsilon(t,\cdot)\big\|^2_{L^2(\mathbb{R}^2)}\text{d} t\leq  C_0
\end{split}
\end{equation*}
for some constant $C_0=C_0(A,\gamma,\alpha,\beta,\rho^\epsilon_0,u^\epsilon_0)>0$
and  $T_*>0$ independent of $\epsilon$.
Based on this fact, the corresponding inviscid limit problem for strong solutions of the shallow water  equations in $\R^2$,
{\it i.e.} in (\ref{eq:1.1}),
$$
\gamma=2,\qquad \mu=\alpha \rho,\qquad \lambda= \beta \rho,
$$
was studied under the assumption that $\rho^\epsilon\rightarrow 0$ as $ |x|\rightarrow \infty$.
Some related results can also be found in Geng-Li-Zhu \cite{zhu}.

In this paper, we first observe  a {\it quasi-symmetric hyperbolic--singular elliptic coupled structure} of system \eqref{eq:1.1} with
some singular terms of the first and second orders (see (\ref{eq:cccq2})).
Based on this, we prove that the inviscid flow (\ref{eq:1.1E})
can be regarded as the viscous flow (\ref{eq:1.1}) with vanishing physical viscosities
for the regular solutions  with far-field vacuum in the sense of Definition \ref{d1},
when the viscous stress tensor is of form (\ref{eq:1.3})--(\ref{10000}) for the most physical case $0<\delta<1$.
Our analysis is based mainly on the following structure of $u^\epsilon$:
\begin{equation}\label{sample}
u^\epsilon_t+u^\epsilon\cdot\nabla u^\epsilon +\frac{2}{\gamma-1}c^\epsilon\nabla c^\epsilon
=-\epsilon (\rho^\epsilon)^{\delta-1}  Lu^\epsilon+\frac{2\delta\epsilon}{\delta-1}
(\rho^\epsilon)^{\frac{\delta-1}{2}} \nabla  (\rho^\epsilon)^{\frac{\delta-1}{2}}\, Q(u^\epsilon),
\end{equation}
which is a special quasi-linear parabolic system with some singular coefficients
and source terms  near the vacuum state.
However, compared with the structure of equations \eqref{zhenji} which controls
the behavior of $u^\epsilon$ for the case $\delta=1$ in \cite{ding},
in order to make sure that the estimates and the life span of the solutions
are independent of $\epsilon$, several new difficulties arise:
\begin{enumerate}
\item[\rm (i)]
The source term contains a strong singularity, since
$$
2(\rho^\epsilon)^{\frac{\delta-1}{2}} \nabla (\rho^\epsilon)^{\frac{\delta-1}{2}}\, Q(u^\epsilon)
=(\delta-1)(\rho^\epsilon)^{\delta-1}\nabla (\log \rho^\epsilon) \, Q(u^\epsilon),
$$
whose behavior becomes more singular than that of $\nabla(\log \rho^\epsilon)\, Q(u^\epsilon)$ in \cite{ding}
when  $\rho^\epsilon \rightarrow 0$ as $|x|\rightarrow \infty$, since $\delta<1$.
In fact, the time evolution of $\nabla (\rho^\epsilon)^{\frac{\delta-1}{2}}$ can be controlled
by a symmetric hyperbolic system  with a second-order singular
term $(\rho^\epsilon)^{\frac{\delta-1}{2}}\nabla \text{div} u^\epsilon$ (see (\ref{kuzxc})).
This does not appear in the uniform estimates on $\nabla \log \rho^\epsilon$
for the shallow water equations in \cite{ding};
\item[\rm (ii)]
The coefficient $(\rho^\epsilon)^{\delta-1}$ in front of the Lam\'e operator $L$ tends
to $\infty$ when $\rho^\epsilon \rightarrow 0$,
rather than  $1$ as in \cite{ding} (see \eqref{zhenji}--\eqref{sample}), as $|x|\rightarrow \infty$.
Then we need to pay additional attention to make sure that $(\rho^\epsilon)^{\delta-1} Lu^\epsilon$
is well defined in some weighted
functional space.
\end{enumerate}

We believe that the methodology developed in this paper
will also provide a better understanding for other related vacuum problems for the degenerate viscous flow
in a more general framework, such as the inviscid limit problem for multidimensional entropy weak solutions
in the whole space.

The rest of this paper is divided into seven sections.
In \S 2, we first introduce the notion of regular solutions of the Cauchy problem for
the compressible Navier-Stokes equations (\ref{eq:1.1})
with far-field vacuum and state our main results.

In \S 3, we reformulate the highly degenerate equations  (\ref{eq:1.1}) into
a trackable system (see (\ref{eq:cccq2}) below),
which consists of a symmetric hyperbolic system for $\nabla (\rho^\epsilon)^{\frac{\delta-1}{2}}$
but with a possible singular second-order term of the fluid velocity:
$\frac{\delta-1}{2} (\rho^\epsilon)^{\frac{\delta-1}{2}}\nabla \text{div} u^\epsilon$,
and  a {\it quasi-symmetric hyperbolic}--{\it singular elliptic}  coupled system  for $(c^\epsilon,u^\epsilon)$,
which  contains some possible singular terms of the first order such
as $(\rho^\epsilon)^{\frac{\delta-1}{2}}\nabla (\rho^\epsilon)^{\frac{\delta-1}{2}}\cdot \nabla u^\epsilon$.

In \S 4, we consider the well-posedness with far-field vacuum of the corresponding Cauchy problem of
the reformulated system (\ref{eq:cccq2}) through an elaborate linearization and approximation process,
whose life span is uniformly positive with respect to $\epsilon$.
Moreover, we obtain some  uniform  estimates of $(c^\epsilon,u^\epsilon)$
in $H^3(\mathbb{R}^3)$ that are independent of $\epsilon$.
Denote $h^\epsilon:=(A\gamma)^{\frac{1-\delta}{2(\gamma-1)}}(c^\epsilon)^{\frac{\delta-1}{\gamma-1}}$.
These estimates are achieved by the following five steps:
\begin{enumerate}
\item[1.]
In \S 4.1,  a  uniform elliptic  operator
$$
\frac{(\gamma-1)^2}{4}\big((h^\epsilon)^2+\nu^2\big)Lu^\epsilon
$$
with artificial viscosity coefficients is added to the momentum equations for sufficiently small constant $\nu>0$
so that the global well-posedness of the approximate solutions of the corresponding  linearized problem (\ref{li4})
is established
for $(h^\epsilon, c^\epsilon,u^\epsilon)$ with initial data
\begin{equation*}
 (h^\epsilon,  c^\epsilon,u^\epsilon)|_{t=0}=(h^\epsilon_0, c^\epsilon_0,u^\epsilon_0)(x)
 =((A\gamma)^{-\frac{\iota}{2}}(c^\epsilon_0+\eta)^{\iota},  c^\epsilon_0, u^\epsilon_0)(x),
\end{equation*}
where $c^\epsilon_0=c^\epsilon(0,x)=\sqrt{A\gamma}(\rho^{\epsilon}_0)^{\frac{\gamma-1}{2}}$,
$\iota=\frac{\delta-1}{\gamma-1}<0$,
and $\eta\in (0,1]$ is some constant.

\item[2.]  In \S 4.2, we obtain the uniform  estimates of
$\big(c^\epsilon, u^\epsilon\big)$ in $H^3$ with respect to $(\nu,\eta,\epsilon)$ for
the linearized problem (\ref{li4}).

\item[3.] In \S 4.3,
the approximate solutions of the Cauchy problem of the reformulated  nonlinear system (\ref{middle})
are established by an iteration scheme and the conclusions obtained in \S 4.1--\S4.2,
whose life spans are uniformly positive with respect to $(\nu,\eta, \epsilon)$.

\item[4.]
In \S 4.4, based on the conclusions of \S 4.3, we recover the solution of the nonlinear reformulated
problem (\ref{li4111}) without artificial viscosity by passing to the limit as $\nu \rightarrow 0$.

\item[5.]
In \S 4.5, based on the conclusions of \S 4.4, we recover the solution of the nonlinear reformulated
problem (\ref{eq:cccq2})--(\ref{farnew}) allowing the vacuum state in the far-field
by passing to the limit as $\eta \rightarrow 0$.
\end{enumerate}

Then  in \S 5,  we show that the uniform energy estimates of the reformulated problem obtained in \S 4
indeed imply the desired uniform energy and life span estimates of the original problem.
In \S 6, we establish the vanishing viscosity limit from the degenerate viscous flow
to the inviscid flow with far-field vacuum.
\S 7 is devoted to a non-existence theory for global regular solutions
with $L^\infty$ decay of $u^\epsilon$.
In the appendix, we list some basic lemmas that are used in our proof.
It is worth pointing out that our framework in this paper
can be applied to other physical dimensions, say $1$ and $2$, via some minor modifications.

\section{Main Theorems}
In this section, we state our main results.
Throughout this paper from now on, we adopt the following simplified notation;
most of it is for the standard homogeneous and inhomogeneous Sobolev spaces:
\begin{equation*}\begin{split}
 & \|f\|_s=\|f\|_{H^s(\mathbb{R}^3)},\quad |f|_q=\|f\|_{L^q(\mathbb{R}^3)},\quad \|f\|_{m,q}=\|f\|_{W^{m,q}(\mathbb{R}^3)},\\[5pt]
 & |f|_{C^k}=\|f\|_{C^k(\mathbb{R}^3)},\quad  \|f\|_{XY(t)}=\| f\|_{X([0,t]; Y(\mathbb{R}^3))},\\[5pt]
 & \|(f,g)\|_X=\|f\|_X+\|g\|_X, \quad   X([0,T]; Y)=X([0,T]; Y(\mathbb{R}^3)),   \\[5pt]
 &  D^{k,r}=\{f\in L^1_{\rm loc}(\mathbb{R}^3)\,:\,|f|_{D^{k,r}}=|\nabla^kf|_{r}<\infty\},\\[5pt]
 &D^k=D^{k,2},  \ \ D^{1}=\{f\in L^6(\mathbb{R}^3)\,:\, |f|_{D^1}= |\nabla f|_{2}<\infty\},\\[5pt]
 & |f|_{D^1}=\|f\|_{D^1(\mathbb{R}^3)},\quad  \|f\|_{X_1 \cap X_2}=\|f\|_{X_1}+\|f\|_{X_2}.
\end{split}
\end{equation*}
 A detailed description of the homogeneous Sobolev spaces  can be found in  Galdi \cite{gandi}.

Now we introduce a proper class of solutions, called regular solutions, of the Cauchy problem (\ref{eq:1.1})--(\ref{10000})
with (\ref{initial})--(\ref{far}).

\begin{definition}\label{d1}
Let $T> 0$.
A solution $(\rho^{\epsilon},u^{\epsilon})$ of the Cauchy problem \eqref{eq:1.1}--\eqref{10000}
with \eqref{initial}--\eqref{far} is called a regular solution in $[0,T]\times \mathbb{R}^3$
if $(\rho^{\epsilon},u^{\epsilon})$  is a weak solution in the sense of distributions
and satisfies the following regularity properties{\rm :}
\begin{enumerate}
\item[\rm (i)]
$\rho^{\epsilon}>0,\quad   c^{\epsilon}\in C([0,T]; H^3),
 \quad  \nabla (\rho^{\epsilon})^{\frac{\delta-1}{2}}\in C([0,T]; L^\infty \cap D^2)${\rm ;}

\smallskip
\item[\rm (ii)] $u^{\epsilon}\in C([0,T]; H^3_{\rm loc})\cap  L^\infty([0,T]; H^3),
   \ \ (\rho^{\epsilon})^{\frac{1-\delta}{2}}u^{\epsilon} \in C([0,T]; H^3)$, \\
$(\rho^{\epsilon})^{\frac{\delta-1}{2}}\nabla u^{\epsilon} \in L^2([0,T]; H^3),
   \quad  (\rho^{\epsilon})^{\frac{1-\delta}{2}}u^{\epsilon}_t \in L^\infty([0,T]; H^1)\cap L^2([0,T]; H^2)$.
\end{enumerate}
\end{definition}

\begin{remark}\label{r1}
It follows from Definition {\rm \ref{d1}} and the Gagliardo-Nirenberg inequality
that $\nabla (\rho^{\epsilon})^{\frac{\delta-1}{2}}$ $\in L^\infty$,
which means that the vacuum occurs only in the far-field.
According to the analysis of the structure of system \eqref{eq:1.1} shown in \S {\rm 3} below,
the regular solutions defined above not only select the velocity in a physically reasonable
way when the density approaches zero in the far-field $($see Remark {\rm \ref{zhunbei3}}$)$
but also make the problem trackable through an elaborate linearization and
approximation process $($see \S {\rm 4}$)$.
\end{remark}

Now we are ready to state our first result on the existence and uniform estimates
with respect to $\epsilon$.
Denote the total mass and the total energy in $\R^3$ respectively as
\begin{align*}
m(t):=\int \rho^{\epsilon}  (t, x)\,{\rm d}x,\qquad
E(t):=\int \Big(\frac{1}{2}\rho^{\epsilon} |u^{\epsilon}|^2 +\frac{A(\rho^{\epsilon})^\gamma}{\gamma-1} \Big)(t,x)\,{\rm d} x.
\end{align*}

\begin{theorem}[Existence and Uniform Estimates]\label{th2}
Let $\epsilon \in (0,1]$, and let the physical parameters $(\gamma,\delta, \alpha,\beta)$ satisfy
\begin{equation}\label{canshu}
\gamma>1,\quad 0<\delta<1, \quad \alpha>0, \quad \alpha+\beta  \geq 0.
\end{equation}
Let the initial data $(\rho^{\epsilon}_0, u^{\epsilon}_0)$ satisfy
\begin{equation}\label{th78}
\begin{split}
&\rho^{\epsilon}_0>0, \quad  (c^\epsilon_0, u^{\epsilon}_0)\in H^3,
\quad \epsilon^{\frac{1}{2}}\nabla (\rho^{\epsilon}_0)^{\frac{\delta-1}{2}} \in  D^1\cap D^2,
\quad  \epsilon^{\frac{1}{4}}\nabla (\rho^{\epsilon}_0)^{\frac{\delta-1}{4}} \in  L^4,
\end{split}
\end{equation}
and let the quantity:
\begin{equation}\label{th78--1}
\begin{split}
\mathcal{E}_0=\big\|(c^{\epsilon}_0, u^{\epsilon}_0)\big\|_3
  +\epsilon^{\frac{1}{2}}\big\|\nabla (\rho^{\epsilon}_0)^{\frac{\delta-1}{2}}\big\|_{D^1\cap D^2}
+  \epsilon^{\frac{1}{4}}\big|\nabla (\rho^{\epsilon}_0)^{\frac{\delta-1}{4}}\big|_4
\end{split}
\end{equation}
be uniformly bounded with respect to $\epsilon$.
Then there exist $T_*>0$ and $C_0>0$ independent of $\epsilon$ such that
there exists a unique regular solution $(\rho^{\epsilon}, u^{\epsilon})$ of the Cauchy problem \eqref{eq:1.1}--\eqref{10000}
with \eqref{initial}--\eqref{far} in $ [0,T_*]\times \mathbb{R}^3$
so that the following estimates hold for all $t\in [0, T_*]${\rm :}
\begin{align}
&\sup_{0\leq t\leq T_*}\left(\left\| c^{\epsilon}(t,\cdot)\right\|^2_{3}+\epsilon
\big\|\nabla (\rho^{\epsilon})^{\frac{\delta-1}{2}}(t,\cdot)\big\|^2_{D^1\cap D^2}\right)
+\text{\rm ess}\sup_{0\leq t\leq T_*}\|u^{\epsilon}(t,\cdot)\|^2_{3}\nonumber\\
&+\epsilon
\int^{T_*}_0\sum_{i=1}^4|(\rho^\epsilon)^{\frac{\delta-1}{2}}\nabla^i u^\epsilon(t,\cdot)|^2_{2}\text{\rm d} t
\leq C_0. \label{shangjie1}
\end{align}
Moreover, if $m(0)<\infty$ is additionally assumed,
then the regular solution $(\rho^\epsilon, u^\epsilon)$ obtained above  has  finite total mass and total energy:
\begin{equation*}
\begin{split}
m(t)=m(0)=\int \rho^{\epsilon}_0(x)\,{\rm d}x  <\infty,\qquad
 E(t)\le E(0)=\int \big(\frac{1}{2}\rho^{\epsilon}_0 |u^{\epsilon}_0|^2 +\frac{P(\rho^{\epsilon}_0)}{\gamma-1} \big) \, \text{\rm d}x<\infty
\end{split}
\end{equation*}
for $0\leq t \leq T_*$.
\end{theorem}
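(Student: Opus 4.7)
The plan is to follow the roadmap sketched in the introduction, breaking the proof into a reformulation, a double-parameter linearization/approximation scheme, and a cascade of limit passages that removes each regularization in turn. First I would introduce the local sound speed $c^\epsilon=\sqrt{A\gamma}(\rho^\epsilon)^{(\gamma-1)/2}$ together with the auxiliary variable $h^\epsilon$ proportional to $(\rho^\epsilon)^{(\delta-1)/2}$, and recast the Cauchy problem as the coupled system (\ref{eq:cccq2}): a symmetric hyperbolic equation for $c^\epsilon$, a transport-type equation for $\nabla h^\epsilon$ carrying a singular second-order source of the form $h^\epsilon\nabla\,\text{div}\,u^\epsilon$, and the momentum equation rewritten as
\begin{equation*}
u^\epsilon_t+u^\epsilon\cdot\nabla u^\epsilon+\tfrac{2}{\gamma-1}c^\epsilon\nabla c^\epsilon
=-\epsilon (h^\epsilon)^2 Lu^\epsilon+\tfrac{2\delta\epsilon}{\delta-1}\, h^\epsilon\nabla h^\epsilon\, Q(u^\epsilon).
\end{equation*}
Tracking the triple $(c^\epsilon,u^\epsilon,\nabla h^\epsilon)$ rather than $(\rho^\epsilon,u^\epsilon)$ directly is the key device: it converts the far-field degeneracy of $\rho^\epsilon$ into controllable (if singular) coefficients whose time evolution is itself governed by a symmetric hyperbolic structure.

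Next I would set up the double approximation. For $\eta\in(0,1]$, I shift the initial sound speed by $\eta$ before defining $h^\epsilon_0$, so that $h^\epsilon_0$ is bounded and no genuine vacuum is present initially; for $\nu>0$, I append the uniformly elliptic operator $\tfrac{(\gamma-1)^2}{4}\nu^2 Lu^\epsilon$ to the momentum equation. On the regularized system I would run the iteration scheme in which the symmetric hyperbolic subsystems for $c^\epsilon$ and $\nabla h^\epsilon$ are advanced along the velocity characteristics while the velocity equation is advanced by linear parabolic theory of Cho-Choe-Kim--type; contraction in a low norm gives, for each fixed $(\epsilon,\nu,\eta)$, a unique smooth solution on a short time interval.

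The main obstacle will be the derivation of $H^3$ a priori estimates that are uniform in the triple $(\epsilon,\nu,\eta)$. My plan is to design a single Gr\"onwall-type functional controlling $\|c^\epsilon\|_3^2+\|u^\epsilon\|_3^2$, the weighted quantity $\epsilon\|\nabla h^\epsilon\|_{D^1\cap D^2}^2$, and the dissipation $\epsilon\int_0^t\sum_{i=1}^4 |h^\epsilon\nabla^i u^\epsilon|_2^2\,\text{d}s$, and then to close it by exploiting the \emph{quasi-symmetric} nature of (\ref{eq:cccq2}). Concretely, the worst first-order singular terms $h^\epsilon\nabla h^\epsilon\cdot\nabla^k u^\epsilon$ produced when the momentum equation is differentiated should be cancelled against the symmetric contributions coming from the transport equation for $\nabla h^\epsilon$, while the second-order singular source $h^\epsilon\nabla\,\text{div}\,u^\epsilon$ appearing in the $\nabla h^\epsilon$-equation must be absorbed by the weighted viscous dissipation $\epsilon|h^\epsilon\nabla^{k+1}u^\epsilon|_2^2$. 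The $L^4$ hypothesis on $\nabla(\rho^\epsilon_0)^{(\delta-1)/4}$ in (\ref{th78}) is imposed precisely so that the nonlinear products $|\nabla h^\epsilon|^2$ and $|h^\epsilon\nabla^2 h^\epsilon|$ are integrable at the zeroth order uniformly in $\epsilon$. I expect this closing step, rather than any of the subsequent limit passages, to be the principal technical difficulty.

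With uniform estimates in hand, the remainder proceeds as a standard cascade: I pass $\nu\downarrow 0$ to remove the artificial viscosity, then $\eta\downarrow 0$ to restore the far-field vacuum, and finally translate the bounds on $(c^\epsilon,u^\epsilon,\nabla h^\epsilon)$ back into the original unknowns to recover (\ref{shangjie1}) together with the regularity required by Definition~\ref{d1}. Uniqueness follows from an $L^2$ energy inequality on the difference of two regular solutions, with the degenerate terms controlled by the weighted bounds of Definition~\ref{d1}. Finally, mass conservation $m(t)=m(0)$ is immediate from the continuity equation since $\rho^\epsilon>0$, and the energy inequality $E(t)\le E(0)$ follows by testing the momentum equation against $u^\epsilon$ and using the nonnegativity of the viscous dissipation under the condition $\alpha+\beta\ge 0$ of (\ref{10000}), both operations being justified by the regularity in Definition~\ref{d1}.
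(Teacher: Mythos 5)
Your roadmap coincides with the paper's own proof in all essentials: the reformulation into the quasi-symmetric hyperbolic--singular elliptic system \eqref{eq:cccq2} in the variables $(c^\epsilon,u^\epsilon,\nabla h^\epsilon)$, the double regularization by the $\eta$-shift of the initial sound speed and the artificial viscosity $\nu^2 Lu$, the $(\nu,\eta,\epsilon)$-uniform $H^3$/weighted Gr\"onwall estimate closed by absorbing the singular terms into the weighted dissipation $\epsilon|h^\epsilon\nabla^{k+1}u^\epsilon|_2^2$, the cascade of limits $\nu\to 0$ then $\eta\to 0$, and the cutoff argument for the conservation laws. One minor correction: the hypothesis $\epsilon^{1/4}\nabla(\rho^\epsilon_0)^{(\delta-1)/4}\in L^4$ is not needed to integrate the products $|\nabla h^\epsilon|^2$ in the energy estimates as you suggest; as Remark~\ref{r2} states, it enters only in the approximation of the initial data from non-vacuum to far-field-vacuum flow, i.e.\ to bound $\epsilon^{1/2}\|\nabla(c_0+\eta)^\iota\|_{D^1\cap D^2}$ uniformly in $\eta$ in \eqref{appnonfar}.
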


\begin{remark}\label{r2}
Regarding the above initial assumption \eqref{th78}, we remark that \eqref{th78}
identifies a class of admissible initial data that provide the unique solvability of
problem \eqref{eq:1.1}--\eqref{10000} with \eqref{initial}--\eqref{far}.
For example,
\begin{equation}\label{Example}
\rho^\epsilon_0(x)=f(x)\chi(\frac{x}{10})+\frac{1}{1+|x|^{2a}},\qquad\,\,  u^\epsilon_0(x)\in H^3(\mathbb{R}^3),
\end{equation}
where
$$
0\leq f(x)\in C^3(\mathbb{R}^3),\qquad \frac{3}{2(\gamma-1)}< a<\frac{1}{2(1-\delta)},
$$
and $\chi(x)\in C^\infty_c(\mathbb{R}^3)$ is  a  truncation function  satisfying
\begin{equation}\label{eq:2.6-77A}
0\leq \chi(x) \leq 1, \qquad  \chi(x)=
 \begin{cases}
1 \;\qquad  \text{if} \ \ |x|\leq 1,\\[3pt]
0   \ \ \ \ \ \ \    \text{if} \ \   |x|\geq 2.
 \end{cases}
 \end{equation}
Moreover, condition $\epsilon^{\frac{1}{4}}\nabla (\rho^{\epsilon}_0)^{\frac{\delta-1}{4}} \in  L^4$
is only  used in the approximation process of the initial data from the non-vacuum flow to the flow with
a far-field vacuum
$($see \eqref{appnonfar} below$)$, which is not used for our energy estimates.
We believe that this condition could be removed if an improved approximation scheme were developed.
\end{remark}

Now, based on the well-posedness results in Theorems \ref{thmakio} and \ref{th2} for both viscous and inviscid flows,
we can show the following asymptotic behavior as $\epsilon \rightarrow 0$:

\begin{theorem}[Inviscid Limit]\label{th3A}
Let $\epsilon\in (0,1]$ and \eqref{canshu}--\eqref{th78--1} hold.
If we additionally assume that there exist functions  $(\rho_0(x),u_0(x))$ defined in $\mathbb{R}^3$
so that
\begin{equation}\label{initialrelation1}
\lim_{\epsilon\rightarrow 0}\big|(c^\epsilon_0-c_0,u^\epsilon_0-u_0)\big|_{2}=0,
\end{equation}
then there exist functions $(\rho,u)$ defined in $[0,T_*]\times \mathbb{R}^3$ such that
\begin{equation}\label{jkkab1}
\sup_{0\leq t \leq T_2}\big\| (c,u)(t,\cdot)\big\|^2_{3}  \leq C  \qquad\mbox{for some constant $C>0$},
\end{equation}
and, for any  constant  $s'\in [0, 3)$,
\begin{equation}\label{shou1A}
\lim_{\epsilon \to 0}\sup_{0\leq t\leq T_*}
\big\|(c^{\epsilon}-c, u^\epsilon-u)(t,\cdot)\big\|_{H^{s'}_{\rm loc}(\mathbb{R}^3)}=0.
\end{equation}
Furthermore, $(\rho, u)$ is  the unique regular
solution of the Cauchy problem \eqref{eq:1.1E}
and \eqref{winitial}
in Theorem $\ref{thmakio}$.
\end{theorem}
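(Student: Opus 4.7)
The plan is to compare the viscous solution $(c^\epsilon, u^\epsilon)$ from Theorem~\ref{th2} directly with the inviscid regular solution $(c, u)$ supplied by Theorem~\ref{thmakio} via an $L^2$ energy estimate on the difference, and then to promote this to $H^{s'}_{\mathrm{loc}}$-convergence by interpolation against the uniform $H^3$ bound. First, I would produce the candidate limit: the uniform bound $\|(c^\epsilon_0, u^\epsilon_0)\|_{3} \leq C$ from~\eqref{th78--1} and the $L^2$ convergence in~\eqref{initialrelation1}, together with weak-$*$ compactness and lower semicontinuity of the Sobolev norm, imply $(c_0, u_0) \in H^3$. Since $c^\epsilon_0 \geq 0$, the $L^2$ limit $c_0 \geq 0$, and $\rho_0 := (c_0/\sqrt{A\gamma})^{2/(\gamma-1)} \geq 0$ is admissible initial data. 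Theorem~\ref{thmakio} then supplies a unique regular Euler solution $(\rho, u)$ on some interval $[0, T^E]$ with $(c, u) \in C([0, T^E]; H^3)$; after shrinking $T_*$ to $\min(T_*, T^E)$ if necessary, both solutions coexist on $[0, T_*]$ with uniform $H^3$ bounds, yielding~\eqref{jkkab1}.

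Next, setting $w := c^\epsilon - c$ and $v := u^\epsilon - u$ and subtracting the symmetrized hyperbolic forms of the two systems, I would derive a linear coupled system for $(w, v)$ whose $v$-equation carries the viscous source
\begin{equation*}
R^\epsilon := -\epsilon(\rho^\epsilon)^{\delta-1}Lu^\epsilon + \frac{2\delta\epsilon}{\delta-1}(\rho^\epsilon)^{(\delta-1)/2}\nabla(\rho^\epsilon)^{(\delta-1)/2}Q(u^\epsilon)
\end{equation*}
from~\eqref{sample}. Testing the $w$-equation against $\frac{4}{(\gamma-1)^2}w$ and the $v$-equation against $v$, the pressure cross-terms $\frac{2}{\gamma-1}c^\epsilon w\operatorname{div} v$ and $\frac{2}{\gamma-1}c^\epsilon v\cdot\nabla w$ combine into $\frac{2}{\gamma-1}\int c^\epsilon \nabla\cdot(wv)\,\mathrm{d}x$, which integrates by parts into a commutator controlled by $\|\nabla c^\epsilon\|_\infty\|(w,v)\|_{L^2}^2$. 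All other contributions are standard quadratic expressions dominated by the uniform $H^3$ norms, producing the basic energy inequality
\begin{equation*}
\frac{d}{dt}\|(w, v)(t)\|_{L^2}^2 \leq C_*\|(w, v)(t)\|_{L^2}^2 + 2\int_{\mathbb{R}^3} R^\epsilon \cdot v\,\mathrm{d}x.
\end{equation*}

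The hard part will be controlling the viscous integral $\int R^\epsilon \cdot v\,\mathrm{d}x$, since the weight $(\rho^\epsilon)^{\delta-1}$ is unbounded at far-field vacuum when $\delta < 1$ and a naive Cauchy-Schwarz pairing against $v$ fails. My strategy is to integrate by parts in the Lam\'e term and exploit the factorization $(\rho^\epsilon)^{\delta-1} = [(\rho^\epsilon)^{(\delta-1)/2}]^2$ together with $\nabla(\rho^\epsilon)^{\delta-1} = 2(\rho^\epsilon)^{(\delta-1)/2}\nabla(\rho^\epsilon)^{(\delta-1)/2}$, distributing the singular weight onto objects already controlled by~\eqref{shangjie1}: the quantities $\sqrt{\epsilon}(\rho^\epsilon)^{(\delta-1)/2}\nabla^i u^\epsilon$ ($i = 1, 2$) are bounded in $L^2([0, T_*]; L^2(\mathbb{R}^3))$, while $\sqrt{\epsilon}\nabla(\rho^\epsilon)^{(\delta-1)/2}$ is bounded in $L^\infty([0, T_*]; L^\infty(\mathbb{R}^3))$ via the Morrey embedding $W^{1,6}(\mathbb{R}^3) \hookrightarrow L^\infty(\mathbb{R}^3)$ applied to the $D^1 \cap D^2$ bound. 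Combined with the $L^\infty$ bounds on $\nabla u, \nabla u^\epsilon$ from the uniform $H^3$ norms, these yield the key estimate $\int_0^{T_*}|\int_{\mathbb{R}^3} R^\epsilon \cdot v\,\mathrm{d}x|\,\mathrm{d}t \leq C\sqrt{\epsilon}$.

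Finally, Gr\"onwall's inequality applied to the energy estimate gives
\begin{equation*}
\sup_{t\in[0, T_*]}\|(w, v)(t)\|_{L^2}^2 \leq e^{C_*T_*}\bigl(\|(c^\epsilon_0 - c_0, u^\epsilon_0 - u_0)\|_{L^2}^2 + C\sqrt{\epsilon}\bigr) \longrightarrow 0
\end{equation*}
as $\epsilon \to 0$, by~\eqref{initialrelation1}. Hence $(c^\epsilon, u^\epsilon) \to (c, u)$ strongly in $L^\infty([0, T_*]; L^2(\mathbb{R}^3))$. The Gagliardo-Nirenberg interpolation $\|f\|_{H^{s'}(K)} \leq C\|f\|_{L^2(K')}^{1-s'/3}\|f\|_{H^3(K')}^{s'/3}$ on compact $K \Subset K'$, combined with the uniform $H^3$ bound~\eqref{shangjie1}, then upgrades this to the strong convergence~\eqref{shou1A} in $L^\infty([0, T_*]; H^{s'}_{\mathrm{loc}}(\mathbb{R}^3))$ for every $s' \in [0, 3)$. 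The identification of $(\rho, u)$ with the regular Euler solution from the first step follows from its uniqueness asserted in Theorem~\ref{thmakio}.
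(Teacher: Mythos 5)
Your overall architecture (produce the candidate Euler solution, prove strong convergence in $L^\infty([0,T_*];L^2)$, then interpolate against the uniform $H^3$ bound to get $H^{s'}_{\rm loc}$) is sensible, and the first and last steps are fine. The gap is in the central claim $\int_0^{T_*}\big|\int_{\mathbb{R}^3}R^\epsilon\cdot v\,{\rm d}x\big|\,{\rm d}t\le C\sqrt{\epsilon}$. The weight $h^\epsilon=(\rho^\epsilon)^{\frac{\delta-1}{2}}$ is \emph{unbounded} at spatial infinity (it blows up as $\rho^\epsilon\to0$, since $\delta<1$), and the uniform estimates \eqref{shangjie1} control it only when it multiplies derivatives of the \emph{viscous} velocity: $\sqrt{\epsilon}\,h^\epsilon\nabla^iu^\epsilon\in L^2_{t,x}$ and $\sqrt{\epsilon}\,\nabla h^\epsilon\in L^\infty$. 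After your integration by parts, the Lam\'e contribution produces, besides the good dissipation $-\epsilon\alpha\int (h^\epsilon)^2|\nabla u^\epsilon|^2\,{\rm d}x$, the cross term $\epsilon\int (h^\epsilon)^2\nabla u^\epsilon:\nabla u\,{\rm d}x$, which requires $h^\epsilon\nabla u\in L^2(\mathbb{R}^3)$ for the \emph{Euler} velocity $u$; nothing in the hypotheses gives the inviscid solution any weighted decay compatible with the growth of $h^\epsilon$ (for the admissible data \eqref{Example}, $(\rho^\epsilon)^{\delta-1}\sim|x|^{2a(1-\delta)}$, and $|x|^{2a(1-\delta)}|\nabla u|^2$ need not be integrable for $u\in H^3$). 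Even the first-order singular term only yields $\int_0^{T_*}\big|(\sqrt{\epsilon}\nabla h^\epsilon)\cdot(\sqrt{\epsilon}h^\epsilon Q(u^\epsilon))\cdot v\big|_{1}\,{\rm d}t\le C$, i.e.\ $O(1)$ rather than $o(1)$, because the two available factors of $\sqrt{\epsilon}$ are exactly consumed by the weights. So the global relative-energy inequality cannot be closed, let alone with a rate $\sqrt{\epsilon}$.

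This is precisely the obstruction the paper's proof is built around. There the viscous remainder is tested only against compactly supported test functions $w$, and the uniform \emph{local} upper bound $h^\epsilon\le b_{R_0}$ on $[0,T_*]\times B_{R_0}$ (Lemma \ref{localupperbound11}, inherited from the particle-path argument of Lemma \ref{localupperbound}) makes $\epsilon\|h^\epsilon\|^2_{L^\infty(\supp w)}|\nabla^2u^\epsilon|_2$ of order $\epsilon$ and $\epsilon\|h^\epsilon\|_{L^\infty(\supp w)}|\psi^\epsilon|_\infty|\nabla u^\epsilon|_2$ of order $\sqrt{\epsilon}$, so both vanish. Convergence is then obtained qualitatively: Aubin--Lions compactness (Lemma \ref{aubin}) gives strong convergence of a subsequence in $C([0,T_*];H^2(B_R))$, the limit is identified as a distributional solution of \eqref{E:4.2}, and uniqueness of the regular Euler solution (Theorem \ref{thmakio}) upgrades subsequential convergence to convergence of the whole family. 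If you want to keep a quantitative relative-energy argument, you would have to localize the energy identity with cut-offs, invoke the local bound on $h^\epsilon$, and control the resulting commutator and far-field terms --- a substantively different and more delicate proof than the one you sketched.
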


The above theorem implies the following result:
\begin{corollary}\label{th3}
Let $\epsilon\in (0,1]$ and \eqref{canshu}--\eqref{th78--1} hold.
Suppose that $(\rho^\epsilon, u^\epsilon)$ is the regular solution of problem \eqref{eq:1.1}--\eqref{10000}
with \eqref{initial}--\eqref{far} in Theorem $\ref{th2}$,  and $(\rho, u)$ is the regular
solution of \eqref{eq:1.1E} with
\eqref{winitial}
in Theorem $\ref{thmakio}$. If
\begin{equation}
(\rho^{\epsilon}, u^{\epsilon})|_{t=0}=(\rho, u)|_{t=0}=(\rho_0, u_0),
\end{equation}
then there exists $T_{*}>0$ independent of $\epsilon$ such that
$(\rho^{\epsilon}, u^{\epsilon})$
converges to $(\rho, u)$ in $[0,T_*]\times \mathbb{R}^3$ as $\epsilon\to 0$ in the sense of distributions.
Moreover,  for any constant  $s'\in [0, 3)$, we also have \eqref{shou1A}.
\end{corollary}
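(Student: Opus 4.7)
The plan is to read Corollary \ref{th3} as a direct specialization of Theorem \ref{th3A}, combined with uniqueness for the inviscid Cauchy problem.

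First I would verify the hypotheses of Theorem \ref{th3A}. Since by assumption $(\rho^{\epsilon}_0,u^{\epsilon}_0)=(\rho_0,u_0)$ for every $\epsilon\in(0,1]$, we automatically have $c^{\epsilon}_0=\sqrt{A\gamma}\,\rho_0^{(\gamma-1)/2}=c_0$, so the initial convergence condition \eqref{initialrelation1} holds trivially (both differences vanish identically). The remaining hypotheses \eqref{canshu}--\eqref{th78--1} are imposed in the statement of the corollary.

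Next I would apply Theorem \ref{th3A} to obtain a common life span $T_*>0$ independent of $\epsilon$, a limit pair $(\widetilde\rho,\widetilde u)$ defined on $[0,T_*]\times\mathbb{R}^3$ satisfying the $H^3$ bound \eqref{jkkab1}, the convergence $\|(c^{\epsilon}-\widetilde c,u^{\epsilon}-\widetilde u)(t,\cdot)\|_{H^{s'}_{\rm loc}}\to0$ uniformly on $[0,T_*]$ for every $s'\in[0,3)$, and the fact that $(\widetilde\rho,\widetilde u)$ is a regular solution of the Euler Cauchy problem \eqref{eq:1.1E} with data $(\rho_0,u_0)$. Since $(\rho,u)$ furnished by Theorem \ref{thmakio} is the unique regular solution of \eqref{eq:1.1E} with the same initial datum on its life span $T_0$, replacing $T_*$ by $\min\{T_*,T_0\}$ we conclude $(\widetilde\rho,\widetilde u)=(\rho,u)$ on $[0,T_*]\times\mathbb{R}^3$; this immediately yields the local Sobolev convergence \eqref{shou1A}.

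Finally I would upgrade the convergence of $c^{\epsilon}$ to a distributional convergence of $\rho^{\epsilon}$. From \eqref{shou1A} with any $s'\in[2,3)$ and the Sobolev embedding $H^{s'}_{\rm loc}\hookrightarrow L^\infty_{\rm loc}$ we obtain both $c^{\epsilon}\to c$ in $L^\infty_{\rm loc}$ and a uniform $L^\infty_{\rm loc}$ bound on $c^{\epsilon}$ coming from the uniform estimate \eqref{shangjie1}. Writing $\rho^{\epsilon}=(A\gamma)^{-1/(\gamma-1)}(c^{\epsilon})^{2/(\gamma-1)}$ and using that the map $z\mapsto z^{2/(\gamma-1)}$ is Lipschitz on the bounded range of $c^{\epsilon}$, we deduce $\rho^{\epsilon}\to\rho$ in $L^\infty_{\rm loc}$, hence in $\mathcal D'([0,T_*]\times\mathbb{R}^3)$. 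Combined with the $L^2_{\rm loc}$ convergence $u^{\epsilon}\to u$ already established, this gives the distributional convergence of $(\rho^{\epsilon},u^{\epsilon})$ asserted in the corollary.

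The only real content beyond invoking Theorem \ref{th3A} is the passage from $c^{\epsilon}$-convergence to $\rho^{\epsilon}$-convergence; this is the step I would treat with a little care, since the exponent $2/(\gamma-1)$ may exceed $1$, but the uniform $L^\infty_{\rm loc}$ control supplied by the $H^3$ bound makes the underlying power map Lipschitz on the relevant bounded set, so no additional argument is needed.
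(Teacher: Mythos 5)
Your proposal is correct and matches the paper's intent: the paper presents Corollary \ref{th3} as a direct consequence of Theorem \ref{th3A} and omits the details as "similar", and your specialization (trivial verification of \eqref{initialrelation1} since the data are $\epsilon$-independent, identification of the limit via the uniqueness in Theorem \ref{thmakio}, and conversion of the $c^\epsilon$-convergence into distributional convergence of $\rho^\epsilon$) fills them in properly. One tiny caveat: when $\gamma>3$ the exponent $2/(\gamma-1)$ is less than $1$ and $z\mapsto z^{2/(\gamma-1)}$ is not Lipschitz near $z=0$ (which matters because of the far-field vacuum), but it is uniformly H\"older continuous there, so the locally uniform convergence $\rho^\epsilon\to\rho$, hence the distributional convergence, still follows.
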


Moreover, we can also obtain the following corollary:

\begin{corollary}\label{example}
Let $\delta\in (0,1)$, and $(\rho_0,u_0)$ satisfy  \eqref{th78asd}.
Then, for every $\epsilon \in (0,1]$, there exist $(\rho^\epsilon_0,u^\epsilon_0)$ satisfying
assumption \eqref{th78} such that
$$
\big\|(c^{\epsilon}_0, u^{\epsilon}_0)\big\|^2_3
+\epsilon \big\|\nabla (\rho^\epsilon_0)^{\frac{\delta-1}{2}}\big\|^2_{D^1\cap D^2}
+\epsilon^{\frac{1}{2} }\big|\nabla (\rho^\epsilon_0)^{\frac{\delta-1}{4}}\big|^2_4 \leq C_0
$$
for some constant $C_0>0$  independent of $\epsilon$ and
$$
\lim_{\epsilon \to 0}\big\|(c^{\epsilon}_0-c_0, u^\epsilon_0
-u_0)\big\|_{3}=0.
$$
Moreover, the corresponding Cauchy problem \eqref{eq:1.1E} and
\eqref{winitial}
can be regarded as a limit problem
of the Cauchy problem \eqref{eq:1.1}--\eqref{10000} with \eqref{initial}--\eqref{far}
as $\epsilon \rightarrow 0$ in the sense of \eqref{shou1A}.
\end{corollary}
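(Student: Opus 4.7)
The plan is to construct the approximating family $(\rho_0^\epsilon, u_0^\epsilon)$ explicitly so that both the uniform initial bound of Theorem \ref{th2} and the $H^3$ convergence to $(\rho_0, u_0)$ hold, and then to invoke Theorem \ref{th3A} for the inviscid limit. Concretely, I would keep $u_0^\epsilon := u_0$ and perturb the local sound speed directly by
\[
c_0^\epsilon(x) := c_0(x) + \epsilon^m \phi(x), \qquad \rho_0^\epsilon(x) := \bigl(c_0^\epsilon(x)/\sqrt{A\gamma}\bigr)^{2/(\gamma-1)},
\]
with a strictly positive smooth reference profile $\phi(x) = (1+|x|^2)^{-q/2}$ and exponents $m > 0$, $q > 3/2$ to be chosen. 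Then $\rho_0^\epsilon > 0$ pointwise, $c_0^\epsilon \in H^3$ automatically, and $\|(c_0^\epsilon - c_0, u_0^\epsilon - u_0)\|_3 = \epsilon^m \|\phi\|_3 \to 0$ as $\epsilon \to 0$, which already establishes the convergence assertion of the corollary.

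The main technical step is to verify the uniform weighted estimates in \eqref{th78}--\eqref{th78--1}. In the far-field, $c_0(x) \to 0$ so that $c_0^\epsilon \approx \epsilon^m \phi$, and a chain-rule computation yields
\[
\nabla^k (\rho_0^\epsilon)^{(\delta-1)/2} \sim \epsilon^{m(\delta-1)/(\gamma-1)} |x|^{q(1-\delta)/(\gamma-1) - k} \qquad \text{for } k \ge 1,
\]
together with an analogous expression for $\nabla (\rho_0^\epsilon)^{(\delta-1)/4}$. Balancing the prefactors $\epsilon^{1/2}$ and $\epsilon^{1/4}$ against the above negative power of $\epsilon$ forces $m \le (\gamma-1)/(2(1-\delta))$, while $L^6$- and $L^2$-integrability of the spatial tails over $\mathbb{R}^3$ forces $q < (\gamma-1)/(2(1-\delta))$. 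When this upper bound on $q$ is compatible with $q > 3/2$ (that is, when $\gamma > 4 - 3\delta$), a pure power-law $\phi$ suffices; otherwise I would replace $\phi$ by the hybrid profile of Remark \ref{r2}, namely $f(x)\chi(x/10) + (1+|x|^{2a})^{-1}$ with $a \in (3/(2(\gamma-1)), 1/(2(1-\delta)))$, letting only the slow-decay outer piece carry the $\epsilon^m$ rescaling. This decouples the $H^3$-regularity of the core from the tail-integrability constraint at infinity.

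With the family in hand, condition \eqref{th78} and the uniform bound \eqref{th78--1} hold with a constant independent of $\epsilon$, and the $H^3$-convergence implies in particular the $L^2$-convergence \eqref{initialrelation1} required by Theorem \ref{th3A}. Applying Theorem \ref{th3A} then yields the convergence \eqref{shou1A} directly, completing the proof. The hardest part is the parameter compatibility in the verification step: the decay rate of the tail must be fast enough for $H^3$ regularity of the perturbation yet slow enough to keep the $\epsilon$-weighted negative powers $(\rho_0^\epsilon)^{(\delta-1)/2}$ bounded in the Sobolev and $L^p$ norms demanded by \eqref{th78}--\eqref{th78--1}, and reconciling the two across the full admissible range of $(\gamma, \delta)$ is exactly where the hybrid construction sketched above is needed.
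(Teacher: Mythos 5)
Your overall strategy---keep $u^\epsilon_0=u_0$, perturb at the level of the sound speed by adding an $\epsilon$-small positive power-law tail, verify \eqref{th78}--\eqref{th78--1}, and then invoke Theorem \ref{th3A}---is the same as the paper's. But there is a genuine gap in the verification step: you do not truncate the original data, and the additive ansatz $c^\epsilon_0=c_0+\epsilon^m\phi$ cannot control the weighted norms in \eqref{th78--1}. The quantities $\nabla(\rho^\epsilon_0)^{\frac{\delta-1}{2}}$ and its derivatives contain terms such as $(c^\epsilon_0)^{\iota-2}(\nabla c_0)^2$ and $(c^\epsilon_0)^{\iota-1}\nabla^2 c_0$ with $\iota=\frac{\delta-1}{\gamma-1}<0$. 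Wherever $c_0$ and its derivatives are present, the only lower bound you have on $c^\epsilon_0$ is $\epsilon^m(1+|x|^2)^{-q/2}$, so these terms are bounded only by $\epsilon^{m(\iota-2)}(1+|x|^2)^{q(2-\iota)/2}|\nabla c_0|^2$ and the like: an unbounded spatial weight multiplying derivatives of $c_0$, integrated over all of $\mathbb{R}^3$. Since \eqref{th78asd} gives only $c_0\in H^3$ with no pointwise decay rate, such weighted integrals are not finite in general, and your heuristic ``in the far field $c^\epsilon_0\approx\epsilon^m\phi$'' breaks down precisely in the transition region where $c_0$ and $\epsilon^m\phi$ are comparable.

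The paper's construction resolves exactly this by using an \emph{$\epsilon$-dependent} cutoff: $(\rho^\epsilon_0)^{\frac{\gamma-1}{2}}=\rho_0^{\frac{\gamma-1}{2}}\chi(\epsilon^{q}x)+\epsilon^{r}f^{\frac{\gamma-1}{2}}$ with $f=(1+|x|^{2a})^{-1}$. Outside $B_{2/\epsilon^{q}}$ the data is the pure tail and everything is computed explicitly; inside $B_{2/\epsilon^{q}}$ one has the \emph{spatially uniform} lower bound $f\gtrsim\epsilon^{2aq}$ (inequality \eqref{flocal}), so every negative power of the density is bounded by a pure power $\epsilon^{-(r+aq(\gamma-1))\cdot(\cdot)}$ with no spatial weight, leaving only unweighted $H^3$-norms of $c_0$; the exponents are then balanced against $\epsilon^{1/2}$ and $\epsilon^{1/4}$ via conditions \eqref{cond1}--\eqref{cond2}. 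The cutoff scale $\epsilon^{-q}\to\infty$ simultaneously gives the strong convergence $\|(\rho^\epsilon_0)^{\frac{\gamma-1}{2}}-\rho_0^{\frac{\gamma-1}{2}}\|_3\to0$. Your closing suggestion of a ``hybrid profile'' with the fixed-scale cutoff $\chi(x/10)$ from Remark \ref{r2} does not supply this mechanism: that remark describes a single admissible datum, not an approximation of a general $H^3$ datum, and a cutoff at a fixed scale neither confines $c_0$ to a region where the tail has a quantified lower bound nor yields convergence to $c_0$. To repair your argument you must replace $c_0$ by $c_0\chi(\epsilon^{q}x)$ (or equivalently work at the level of $\rho_0^{\frac{\gamma-1}{2}}$ as the paper does) and rerun the two-region estimate.
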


Naturally, a further question is whether the solution obtained in Theorem \ref{th2}
can be extended globally in time under the assumption that the initial data is a small perturbation around
some background solution.
Under the assumption that
$$
\rho^\epsilon_0(x)\rightarrow \overline{\rho} \qquad \mbox{as $|x|\rightarrow 0$}
$$
for some constant $\overline{\rho}>0$,
the classical theories, no matter whether for the constant viscous flow ({\it e.g.} \cites{HX1, Wangweike, mat})
or the degenerate viscous flow {{\it e.g.} \cite{decayd}) away from a vacuum,
all indicate that the corresponding background solution for $(\rho^\epsilon,u^\epsilon)$ must be $(\overline{\rho},0)$
with
the following large-time behavior:
\begin{equation}\label{eq:2.15}
\limsup_{t\rightarrow \infty} \big|u^\epsilon(t,\cdot)\big|_{\infty}=0.
\end{equation}
However, when the vacuum appears, the situation for the degenerate viscous flow
is somewhat surprising, since such an extension seems impossible when the initial momentum is nonzero.
More precisely, denote the flow momentum by
$$
\mathbb{P}(t):=\int (\rho^\epsilon u^\epsilon)(t,x)\,{\rm d}x.
$$
\begin{theorem}\label{th:2.20}
 Assume that $0<m(0)<\infty$, $|\mathbb{P}(0)|>0$,  $\epsilon\geq 0$, and \eqref{canshu} hold.
Then there is no global  regular  solution $(\rho^\epsilon,u^\epsilon)$ in the sense of Theorem   {\rm\ref{thmakio}} or {\rm\ref{th2}}
satisfying \eqref{eq:2.15}.
\end {theorem}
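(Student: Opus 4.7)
The plan is a contradiction argument based on conservation of total mass and momentum together with the elementary Hölder bound $|\mathbb{P}(t)|\le m(t)\,|u^\epsilon(t,\cdot)|_\infty$. Assume, toward a contradiction, that a global regular solution $(\rho^\epsilon,u^\epsilon)$ in the sense of Theorem $\ref{thmakio}$ (when $\epsilon=0$) or Theorem $\ref{th2}$ (when $\epsilon>0$) exists on $[0,\infty)\times\mathbb{R}^3$ and satisfies the decay \eqref{eq:2.15}.

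The key step is to establish that both $m(t)$ and $\mathbb{P}(t)$ are conserved along such regular solutions. Testing the continuity equation and the momentum equation in \eqref{eq:1.1} against the radial cut-off $\chi_R(x):=\chi(x/R)$, where $\chi$ is as in \eqref{eq:2.6-77A}, and integrating by parts gives
\begin{equation*}
\frac{\dd}{\dd t}\int\rho^\epsilon\chi_R\,\dd x=\int\rho^\epsilon u^\epsilon\cdot\nabla\chi_R\,\dd x,
\end{equation*}
\begin{equation*}
\frac{\dd}{\dd t}\int\rho^\epsilon u^\epsilon\,\chi_R\,\dd x
 =\int\big(\rho^\epsilon u^\epsilon\otimes u^\epsilon+p(\rho^\epsilon)\mathbb{I}_3-\epsilon\,\mathbb{T}(\rho^\epsilon,\nabla u^\epsilon)\big)\nabla\chi_R\,\dd x.
\end{equation*}
The regularity collected in Definition $\ref{d1}$ and the assumption $m(0)<\infty$ force every right-hand boundary flux to vanish as $R\to\infty$, uniformly on compact time intervals, so $m(t)\equiv m(0)$ and $\mathbb{P}(t)\equiv\mathbb{P}(0)$ for all $t\ge 0$. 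Combining momentum conservation with Hölder's inequality and mass conservation then yields
\begin{equation*}
0<|\mathbb{P}(0)|=|\mathbb{P}(t)|=\Big|\int\rho^\epsilon u^\epsilon(t,x)\,\dd x\Big|\le m(0)\,|u^\epsilon(t,\cdot)|_\infty,
\end{equation*}
and sending $t\to\infty$ together with \eqref{eq:2.15} forces $|\mathbb{P}(0)|\le 0$, a contradiction.

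The main obstacle will be the rigorous verification that $\int\epsilon\,\mathbb{T}(\rho^\epsilon,\nabla u^\epsilon)\nabla\chi_R\,\dd x\to 0$ in the viscous case $\epsilon>0$, because the coefficients $\alpha(\rho^\epsilon)^\delta$ and $\beta(\rho^\epsilon)^\delta$ degenerate at the far-field vacuum while $u^\epsilon\in H^3$ enjoys no intrinsic spatial decay. I would handle this via the factorization
$(\rho^\epsilon)^\delta\nabla u^\epsilon=(\rho^\epsilon)^{(\delta+1)/2}\cdot\big((\rho^\epsilon)^{(\delta-1)/2}\nabla u^\epsilon\big)$,
exploiting the weighted regularity in Definition $\ref{d1}$---in particular $(\rho^\epsilon)^{(\delta-1)/2}\nabla u^\epsilon\in L^2([0,T];H^3)$ together with $\rho^\epsilon\in L^1\cap L^\infty$ (the former from finite mass, the latter from $c^\epsilon\in H^3$)---to place $|\mathbb{T}(\rho^\epsilon,\nabla u^\epsilon)|$ in $L^1$, so that its integral over the annular support $\{R\le|x|\le 2R\}$ of $\nabla\chi_R$ vanishes as $R\to\infty$. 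Analogous but simpler estimates, using $\rho^\epsilon|u^\epsilon|^2\le|u^\epsilon|_\infty^2\rho^\epsilon\in L^1$ and $p(\rho^\epsilon)=A(\rho^\epsilon)^\gamma\le A|\rho^\epsilon|_\infty^{\gamma-1}\rho^\epsilon\in L^1$, control the convective and pressure fluxes for both the viscous and inviscid ($\epsilon=0$) cases.
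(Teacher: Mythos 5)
Your proposal is correct and follows essentially the same route as the paper: establish conservation of $m(t)$ and $\mathbb{P}(t)$ (the paper does this via cut-off/weight functions and the observation that $\rho u\otimes u$, $\rho^\gamma$, and $\rho^\delta\nabla u$ lie in $W^{1,1}(\mathbb{R}^3)$, which your factorization $(\rho^\epsilon)^{\delta}\nabla u^\epsilon=(\rho^\epsilon)^{(\delta+1)/2}\cdot(\rho^\epsilon)^{(\delta-1)/2}\nabla u^\epsilon$ justifies in more detail), then derive a contradiction with \eqref{eq:2.15}. The only cosmetic difference is that the paper routes the final inequality through the kinetic energy, $|\mathbb{P}(t)|\le\sqrt{2m(t)E_{\rm k}(t)}$ and $E_{\rm k}(t)\le\frac{1}{2}m(0)|u(t)|_\infty^2$, whereas you use the direct bound $|\mathbb{P}(t)|\le m(0)|u^\epsilon(t,\cdot)|_\infty$; both yield the same uniform positive lower bound on $|u^\epsilon(t,\cdot)|_\infty$.
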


Finally, we show that the condition $\epsilon^{\frac{1}{2}}\nabla (\rho^{\epsilon}_0)^{\frac{\delta-1}{2}}\in D^{1}$
in \eqref{th78} can be replaced by other conditions such as
$\epsilon^{\frac{1}{2}}\nabla (\rho^{\epsilon}_0)^{\frac{\delta-1}{2}} \in  L^q \cap D^{1,3}$ for any   constant $q>3$:

\begin{theorem}\label{generaltheorem}
Let  $\epsilon\in(0, 1]$ and \eqref{canshu} hold. Let $q\in (3,\infty]$ be a fixed constant.
Assume that the initial data $( \rho^{\epsilon}_0, u^{\epsilon}_0)$ satisfy
\begin{equation}\label{th78-general}
\rho^{\epsilon}_0>0,\quad  (c^{\epsilon}_0, u^\epsilon_0)\in H^3, \quad
\epsilon^{\frac{1}{2}}\nabla (\rho^{\epsilon}_0)^{\frac{\delta-1}{2}} \in  L^q\cap D^{1,3}\cap D^2, \quad
\epsilon^{\frac{1}{4}}\nabla (\rho^{\epsilon}_0)^{\frac{\delta-1}{4}} \in  L^6,
\end{equation}
and
\begin{equation}\label{th78--1-general}
\begin{split}
\big\|(c^{\epsilon}_0, u^\epsilon_0)\big\|_3
+\epsilon^{\frac{1}{2}}\big\|\nabla (\rho^{\epsilon}_0)^{\frac{\delta-1}{2}}\big\|_{L^q\cap D^{1,3}\cap D^2}
+  \epsilon^{\frac{1}{4}}\big|\nabla (\rho^{\epsilon}_0)^{\frac{\delta-1}{4}}\big|_6
\end{split}
\end{equation}
is uniformly bounded with respect to $\epsilon$.
Then there exist $T_*>0$ and $C>0$, both independent of $\epsilon$,
such that  the  unique regular solution $(\rho^{\epsilon}, u^{\epsilon})$ of the Cauchy problem \eqref{eq:1.1}--\eqref{10000}
with \eqref{initial}--\eqref{far} exists in $[0,T_*]\times \mathbb{R}^3$
with the following estimates{\rm :}
\begin{align}
&\sup_{0\leq t\leq T_*}\left(\big\|c^{\epsilon}(t,\cdot)\big\|^2_{3}+\epsilon
\big\|\nabla (\rho^{\epsilon})^{\frac{\delta-1}{2}}(t,\cdot)\big\|^2_{L^q\cap D^{1,3}\cap D^2}\right)
+\text{\rm ess}\sup_{0\leq t\leq T_*}\|
u^{\epsilon}(t,\cdot)\|^2_{3}\nonumber\\
& +\epsilon
\int^{T_*}_0\sum_{i=1}^4|(\rho^\epsilon)^{\frac{\delta-1}{2}}\nabla^i u^\epsilon(t,\cdot)|^2_{2}\,\text{\rm d} t\leq C_0.
\end{align}
 Moreover, under proper changes to the corresponding assumptions,
 the results obtained in Theorems  {\rm \ref{th3}--\ref{th:2.20}} and Corollaries {\rm \ref{th3}}--{\rm \ref{example}}
still hold.
\end{theorem}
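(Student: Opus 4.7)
The plan is to reuse the framework developed for Theorem \ref{th2} almost verbatim, replacing the $L^2$-type control of $\nabla(\rho^\epsilon)^{(\delta-1)/2}$ at the first-derivative level by $L^q\cap L^3$-type control. The critical observation is that the $D^1\cap D^2$ assumption in \eqref{th78} was used solely to extract an $L^\infty$ bound on $\nabla(\rho^\epsilon)^{(\delta-1)/2}$ (via $D^1\hookrightarrow L^6$ followed by a Gagliardo--Nirenberg interpolation with the $D^2$ datum). The new hypothesis $\epsilon^{1/2}\nabla(\rho^\epsilon_0)^{(\delta-1)/2}\in L^q\cap D^{1,3}\cap D^2$ with $q>3$ retains this $L^\infty$ control through the interpolation $|f|_\infty\le C|f|_q^{\theta}|\nabla f|_3^{1-\theta}$ valid for $q>3$. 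Thus the structural arguments of \S3--\S5 (the quasi-symmetric hyperbolic--singular elliptic reformulation, the linearization--approximation scheme, and the closure of the energy estimate at the $H^3$ level for $(c^\epsilon,u^\epsilon)$) go through with the same targets, and so do the vanishing-viscosity-limit arguments of \S6 and the non-existence result of \S7.

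First I would revisit the transport-type equation satisfied by $\nabla(\rho^\epsilon)^{(\delta-1)/2}$ (schematically $\partial_t w+u^\epsilon\cdot\nabla w+\tfrac{\delta-1}{2}(\rho^\epsilon)^{(\delta-1)/2}\nabla\mathrm{div}\,u^\epsilon+\text{lower order}=0$) and derive $L^p$ estimates for $p=q$ and $p=3$ by multiplying by $|w|^{p-2}w$. The drift term contributes a standard $\|\nabla u^\epsilon\|_\infty$-weighted Gr\"onwall factor, while the singular source is closed using the viscous gain: $\epsilon^{1/2}(\rho^\epsilon)^{(\delta-1)/2}\nabla\mathrm{div}\,u^\epsilon$ is controlled in $L^q\cap L^3$ through the $\epsilon$-weighted higher-order terms $\epsilon\int_0^{T_*}|(\rho^\epsilon)^{(\delta-1)/2}\nabla^i u^\epsilon|_2^2\,\dd t$ already appearing in \eqref{shangjie1}, combined with Sobolev embeddings on the $H^3$ piece of $u^\epsilon$ and the $L^\infty$ bound on $\nabla(\rho^\epsilon)^{(\delta-1)/2}$. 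The $D^2$ estimate is unchanged from \S4, since it only relies on the symmetric hyperbolic structure at that level and on the already-established $L^\infty$ bound of $\nabla(\rho^\epsilon)^{(\delta-1)/2}$.

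Next I would update the initial approximation procedure of \S4.5. The auxiliary condition $\epsilon^{1/4}\nabla(\rho^\epsilon_0)^{(\delta-1)/4}\in L^6$ (replacing $L^4$) is exactly what is needed when one builds the non-vacuum approximants and passes $\eta\to 0$: the truncation produces commutators whose $L^p$-norms for $p=6$ are what close via H\"older against the $L^3$ gradient and the $L^q$ bound of $\nabla(\rho^\epsilon_0)^{(\delta-1)/2}$. Once the \textit{a priori} estimates are reclosed in the new norms, the linearization--iteration of \S4.1--\S4.3 and the two limit passages of \S4.4--\S4.5 carry over directly, yielding the existence, uniqueness, and uniform bound asserted. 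The inviscid limit (analogue of Theorem \ref{th3A} and Corollary \ref{th3}) then follows from the same compactness/weak convergence argument of \S6, because it uses only the $H^3$ uniform control of $(c^\epsilon,u^\epsilon)$; and the non-existence result (analogue of Theorem \ref{th:2.20}) is independent of which space $\nabla(\rho^\epsilon)^{(\delta-1)/2}$ lives in, as it relies only on momentum conservation and the $L^\infty$-decay assumption on $u^\epsilon$.

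The main obstacle I anticipate is the tracking of the Gagliardo--Nirenberg interpolation exponents when $q$ is close to $3$ versus when $q=\infty$, since several of the lower-order commutator terms in the $L^q\cap L^3$ transport estimates must be closed uniformly in $\epsilon$ using the bounded-but-not-small quantity $\epsilon^{1/2}|\nabla(\rho^\epsilon)^{(\delta-1)/2}|_{L^q\cap L^3}$. Care is required so that the $\epsilon$-weights match precisely with those appearing in the viscous dissipation terms; otherwise the interpolation would force a shrinking time interval. This is a bookkeeping challenge rather than a conceptual one, and the net effect is only to determine which interpolation weights to invoke in each step, leaving the overall structure and life span $T_*$ unchanged.
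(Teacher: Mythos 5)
Your proposal matches the paper's intended argument: the paper itself omits the proof of this theorem, stating only that it is similar to those of Theorem \ref{th2} and Theorems \ref{th3A}--\ref{th:2.20}, and your adaptation --- rerunning the transport estimates for $\nabla(\rho^{\epsilon})^{\frac{\delta-1}{2}}$ in $L^q\cap D^{1,3}\cap D^2$ (closing the singular source against the $\epsilon$-weighted dissipation $\epsilon\int_0^{T_*}\sum_{i=1}^4|(\rho^\epsilon)^{\frac{\delta-1}{2}}\nabla^i u^\epsilon|_2^2\,{\rm d}t$), while leaving the $H^3$ energy scheme for $(c^\epsilon,u^\epsilon)$, the linearization/approximation steps, the $\nu,\eta\to0$ passages, and the inviscid-limit and non-existence arguments untouched --- is exactly that adaptation. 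One small correction: the interpolation $|f|_\infty\le C|f|_q^{\theta}|\nabla f|_3^{1-\theta}$ fails at the endpoint $r=3$ (the paper's Gagliardo--Nirenberg lemma, Lemma \ref{lem2as}, requires $r>3$, and $W^{1,3}(\mathbb{R}^3)\not\hookrightarrow L^\infty$); one must first use that $D^{1,3}\cap D^2$ places $\nabla f$ in $L^3\cap L^6$, hence in some $L^r$ with $r>3$, before interpolating against $|f|_q$ to obtain the $L^\infty$ bound --- which is precisely why the $D^2$ hypothesis is retained in \eqref{th78-general}.
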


The proof of this theorem is similar to those of Theorems  {\rm \ref{th3}--\ref{th:2.20}}
and Corollaries {\rm \ref{th3}}--{\rm \ref{example}}.
Thus, we omit its details.

\begin{remark}
The conditions in \eqref{th78-general} identify a class of admissible initial data that
provide the unique solvability of problem \eqref{eq:1.1}--\eqref{10000} with \eqref{initial}--\eqref{far}
such as the one shown in \eqref{Example}--\eqref{eq:2.6-77A} with
$$
0\leq f(x)\in C^3(\mathbb{R}^3), \qquad \frac{3}{2(\gamma-1)}< a<\frac{1-3/q}{1-\delta}.
$$
\end{remark}

We remark that our framework in this paper is applicable
to other physical dimensions, say $1$ and $2$,
via some minor modifications.

\section{Reformulation}
In this section, we reformulate the highly degenerate equations  (\ref{eq:1.1})
into a  trackable system that consists of a symmetric hyperbolic system
with a possibly singular second-order term of the fluid velocity,
and  a {\it quasi-symmetric hyperbolic}--{\it singular elliptic}  coupled system.
For simplicity, throughout this section,  we denote $(\rho^\epsilon,u^\epsilon,c^\epsilon, \psi^\epsilon,h^\epsilon)$
by $(\rho,u,c,\psi,h)$ and $(\rho^\epsilon_0,u^\epsilon_0, c^\epsilon_0, \psi^\epsilon_0,h^\epsilon_0)$
by $(\rho_0,u_0,c_0,\psi_0,h_0)$, respectively.

\subsection{New variables}
Let $T>0$ be a fixed finite time.
For $\delta\in (0,1)$,  when $\rho(t,x)>0$ for $(t,x)\in [0,T]\times \mathbb{R}^3$,
$(\ref{eq:1.1})_2$ can be formally rewritten as
\begin{equation}\label{qiyi}
u_t+u\cdot\nabla u +\frac{2A\gamma}{\gamma-1} \rho^{\frac{\gamma-1}{2}} \nabla \rho^{\frac{\gamma-1}{2}}
+\epsilon\rho^{\delta-1}  Lu=\frac{2\delta\epsilon}{\delta-1}\rho^{\frac{\delta-1}{2}} \nabla \rho^{\frac{\delta-1}{2}}\, Q(u),
\end{equation}
where $\rho^{\frac{\gamma-1}{2}}$ is a constant multiple of the local sound speed $c=\sqrt{p'(\rho)}$.
Then, in order to govern velocity $u$ via the above quasilinear parabolic equations with far-field vacuum,
\begin{enumerate}
\item[\rm (i)]
It is necessary to control the behavior of the special source term
$$
\frac{2\delta\epsilon}{\delta-1}\rho^{\frac{\delta-1}{2}} \nabla \rho^{\frac{\delta-1}{2}}\, Q(u)
$$
since $\delta-1<0$ and
$$
\rho(t,x)\rightarrow 0 \qquad \text{as $|x|\rightarrow \infty\, $ for $t\in [0,T]$};
$$
\item[\rm (ii)]
Note that the coefficient $\rho^{\delta-1}$ in front of the Lam\'e operator $L$
tends to $\infty$ as $\rho\rightarrow 0$ in the far-field,
so that it is necessary to show that $\rho^{\delta-1} Lu$ is well defined at least
in the space of continuous functions,
in order that the solution obtained is regular when $t>0$.
\end{enumerate}
Therefore, the three quantities
$$
\rho^{\frac{\gamma-1}{2}}, \quad \nabla \rho^{\frac{\delta-1}{2}}, \quad \rho^{\delta-1} Lu
$$
play a significant role in our analysis on the regularity of the fluid velocity $u$.
In fact, in terms of the fluid velocity $u$, and
\begin{equation}\label{bianliang}
c=\sqrt{A\gamma}\rho^{\frac{\gamma-1}{2}},
\quad \psi=\nabla \rho^{\frac{\delta-1}{2}}=\nabla h=(\psi^{(1)},\psi^{(2)},\psi^{(3)}),
\end{equation}
system (\ref{eq:1.1}) can be rewritten as the following enlarged system:
\begin{equation}
\begin{cases}\label{eq:cccq}
\displaystyle
\psi_t+\nabla (u\cdot \psi)+\frac{\delta-1}{2}\psi\,\text{div} u +\frac{\delta-1}{2} h\nabla \text{div} u=0,\\[8pt]
c_t+u\cdot \nabla c+\frac{\gamma-1}{2}c\,\text{div} u=0,\\[8pt]
 \displaystyle
 u_t+u\cdot\nabla u +\frac{2}{\gamma-1}c\nabla c+\epsilon h^2 Lu=\frac{2\delta\epsilon }{\delta-1}h \psi  Q(u),
 \end{cases}
\end{equation}
where
$$
h=\rho^{\frac{\delta-1}{2}}=(A\gamma)^{-\frac{\iota}{2}}c^\iota, \qquad \iota=\frac{\delta-1}{\gamma-1}<0.
$$

The initial data are given by
\begin{equation}\label{qwe}
(\psi, c,u)|_{t=0}=(\psi_0, c_0,u_0)(x)
:=(\nabla \rho^{\frac{\delta-1}{2}}_0, \sqrt{A\gamma}\rho^{\frac{\gamma-1}{2}}_0, u_0)(x)\qquad \mbox{for $x\in \mathbb{R}^3$},
\end{equation}
so that
\begin{equation}\label{E:3.3}
(\psi_0, c_0,u_0)\rightarrow (0, 0, 0) \qquad \text{as $|x|\rightarrow \infty$}.
\end{equation}

\subsection{Mathematical structure of the reformulated system}
Now we introduce the desired {\it quasi-symmetric hyperbolic--singular elliptic} coupled structure
in order to deal with
the corresponding inviscid limit problem.

The new system (\ref{eq:cccq}) still seems un-trackable for the purpose of  constructing the regular
solutions with far-field vacuum in $H^3$ under the following initial assumption:
\begin{equation}\label{initial-reformulation}
c_0>0,\quad  (c_0, u_0)\in H^3, \quad  \psi_0 \in  D^1\cap D^2.
\end{equation}

First,  even if $h\nabla \text{div} u$ could be controlled by the singular elliptic operator  $h^{2}Lu$
appearing in the momentum equations,
the special source term $\psi$ can not be controlled by a scalar transport equation,
but by a quasilinear hyperbolic system.
It follows from the definition of $\psi$ that, if $\psi\in D^1\cap D^2$,  then
$\partial_i \psi^{(j)}=\partial_j \psi^{(i)}$ in the sense of distributions for $i,j=1,2,3$.
Thus, $(\ref{eq:cccq})_1$
can be rewritten as
\begin{equation}\label{kuzxc}
\psi_t+\sum_{l=1}^3 B_l \partial_l\psi+B\psi+ \frac{\delta-1}{2} h\nabla \text{div} u=0,
\end{equation}
where $B_l=(b^l_{ij})_{3\times 3}$, for  $i,j,l=1,2,3$,
are symmetric  with
$$
b^l_{ij}=u^{(l)}\quad \mbox{for $i=j$;\qquad \,\,\,  $b^l_{ij}=0$ \quad otherwise},
$$
and $B=(\nabla u)^\top+\frac{\delta-1}{2}\text{div}u \mathbb{I}_3$.
This indicates that the subtle source term $\psi$ could actually be controlled by  the
symmetric hyperbolic system with one possible singular source term $\frac{\delta-1}{2} h \nabla \text{div} u$
near the  vacuum.

Second, in order to make sure that the life span and the corresponding energy estimates of
the regular solutions that we will obtain are uniform with respect to $\epsilon$,
we need to introduce more symmetrization arguments, except the above structure for $\psi$.
In fact, letting  $ U=(c, u)$, according to (\ref{kuzxc}),
we rewrite the Cauchy problem (\ref{eq:cccq})--(\ref{E:3.3}) as
\begin{equation}
\label{eq:cccq2}
\begin{cases}
 \underbrace{\displaystyle \psi_t+\sum_{l=1}^3 B_l(u) \partial_l\psi}_{\text{Symmetric hyperbolic}}
  + \underbrace{B(u)\psi}_{\text{First order}}
  + \underbrace{\frac{\delta-1}{2} (A\gamma)^{-\frac{\iota}{2}}c^\iota  \nabla \text{div} u}_{\text{Singular second  order}}=0,\\[15pt]
\displaystyle
 \underbrace{A_0 U_t+\sum^{3}_{j=1}A_j(U)\partial_j U}_{\text{Symmetric hyperbolic}}
 = \underbrace{-\epsilon F(U)}_{\text{Singular elliptic}}+ \underbrace{\epsilon G( \psi, U),}_{\text{Singular first order}}
 \end{cases}
\end{equation}
with the following initial data:
\begin{equation}\label{initialnew}
 ( \psi, c,u)|_{t=0}=(\psi_0, c_0,u_0)(x)
 =(\nabla\rho^{\frac{\delta-1}{2}}_0, \sqrt{A\gamma}\rho^{\frac{\gamma-1}{2}}_0, u_0)(x)\qquad\,\, \mbox{for $x\in \mathbb{R}^3$},
\end{equation}
so that
\begin{equation}\label{farnew}
(\psi_0, c_0,u_0)\rightarrow (0, 0, 0) \qquad \text{as $|x|\rightarrow\infty$},
\end{equation}
\noindent
\smallskip
where $\displaystyle\partial_l \psi=\partial_{x_l}\psi$,
$\displaystyle \partial_j U=\partial_{x_j}U,  i,j,l=1, 2,3$,
\begin{equation}\label{xishu}
\begin{split}
&A_0=\left(\begin{array}{cc}
1&0\\[8pt]
0&a_1\mathbb{I}_3
\end{array}
\right),\qquad
\displaystyle
A_j=\left(\begin{array}{cc}
u_j&\frac{\gamma-1}{2}c e_j\\[8pt]
\frac{\gamma-1}{2}c e_j^\top &a_1u_j\mathbb{I}_3
\end{array}
\right),\,\,\, j=1,2,3,\\[10pt]
&
F(U)=a_1\left(
\begin{array}{c}
 0\\[5pt]
(A\gamma)^{-\iota}c^{2\iota} Lu
\end{array}\right), \quad
  G(\psi,U)=a_1\left(
\begin{array}{c}
 0\\[5pt]
\frac{2\delta}{\delta-1}(A\gamma)^{-\frac{\iota}{2}}c^\iota \psi \, Q(u)
\end{array}\right),
\end{split}
\end{equation}
with $a_1\equiv:\frac{(\gamma-1)^2}{4}$, $e_1=(1,0,0), e_2=(0,1,0)$, and $e_3=(0,0,1)$.

\begin{remark}
The hyperbolic operators $H=(H^1,H^2)$ for $(\psi,U)${\rm :}
\begin{equation*}\begin{split}
H^1(\psi):=&\,\psi_t+\nabla(u\cdot\psi),\\[8pt]
H^2(U):=&\,H^2(c,u)=\left(
\begin{array}{c}
c_t+u\cdot \nabla c+\frac{\gamma-1}{2}c\,\text{\rm div} u\\[8pt]
u_t+u\cdot\nabla u +\frac{2}{\gamma-1}c\nabla c
\end{array}\right)
\end{split}
\end{equation*}
can be  rewritten into the following  symmetric hyperbolic forms:
\begin{equation}\label{symmetricpart}
\psi_t+\sum_{l=1}^3 B_l \partial_l\psi+(\nabla u)^\top \psi,\qquad
A_0 U_t+\sum^{3}_{j=1}A_j(U)\partial_j U,
\end{equation}
which make the $H^3$ estimates of $(c,u)$ possibly independent of $\epsilon$.
However, not every first-order term in system \eqref{eq:cccq2} has been written into the
symmetric structure.
This is the reason why we only say that system \eqref{eq:cccq2} satisfies
the {\it quasi-symmetric}  structure, rather than the {\it symmetric} one.
Hence, new
treatments are needed for the possibly singular source terms
$\frac{\delta-1}{2} h\nabla \text{div} u$ and $\epsilon G(\psi, U)$.
\end{remark}

\section{Uniform Energy Estimates for the Reformulated Problem}
This section is devoted to the establishment of the uniform local-in-time well-posedness (with respect to $\epsilon$)
of strong solutions with far-field vacuum of the reformulated Cauchy problem (\ref{eq:cccq2})--(\ref{farnew}).
Moreover, some uniform estimates of
$\displaystyle\big(c^\epsilon, u^\epsilon\big)$ in $H^3$
with respect to $\epsilon$ can also be established.
For simplicity, in this section, we denote $(\rho^\epsilon,u^\epsilon,c^\epsilon, \psi^\epsilon,h^\epsilon)$
by $(\rho,u,c,\psi,h)$, and $(\rho^\epsilon_0,u^\epsilon_0, c^\epsilon_0, \psi^\epsilon_0,h^\epsilon_0)$
by $(\rho_0,u_0,c_0,\psi_0,h_0)$, respectively.

We first give the definition of strong solutions of the Cauchy problem (\ref{eq:cccq2})--(\ref{farnew}).

\begin{definition}\label{strongsoluiton}
Let $T>0$.
A vector function $(\psi,c,u)$ is called a strong solution in $[0,T]\times \mathbb{R}^3$
if $(\psi,c,u)$ is a weak solution of the Cauchy problem \eqref{eq:cccq2}--\eqref{farnew} in $[0,T]\times \mathbb{R}^3$
in the sense of distributions, all derivatives involved in \eqref{eq:cccq2} are regular distributions,
and \eqref{eq:cccq2} holds almost everywhere in $[0,T]\times \mathbb{R}^3$.
\end{definition}

We now state the main result in this section on the well-posedness of the Cauchy problem
for the reformulated system (\ref{eq:cccq2}).

\begin{theorem}\label{th1}
Let \eqref{canshu} hold and $\epsilon\in (0,1]$.
If the initial data $(\psi_0, c_0,  u_0)$ satisfy
\begin{equation}\label{th78qq}
c_0>0,\quad  (c_0, u_0)\in H^3,
\quad \epsilon^{\frac{1}{2}}\psi_0=\epsilon^{\frac{1}{2}}(A\gamma)^{-\frac{\iota}{2}}\nabla c^\iota_0 \in  D^1\cap D^2,
\quad \epsilon^{\frac{1}{4}}\nabla c^{\frac{\iota}{2}}_0 \in  L^4,
\end{equation}
then there exists $T_*>0$ independent of $\epsilon$ such that there is a unique strong
solution $(\psi,c,u)=((A\gamma)^{-\frac{\iota}{2}}\nabla c^\iota,c,u)$
in $[0,T_*]\times \mathbb{R}^3$ of the Cauchy problem \eqref{eq:cccq2}--\eqref{farnew} satisfying
\begin{equation}\label{reformulateregularity}
\begin{split}
&\psi \in C([0,T];D^1\cap D^2),\quad  c\in C([0,T];H^3),\\
& u\in C([0,T]; H^3_{\rm loc})\cap L^\infty([0,T]; H^3), \quad c^\iota \nabla u\in L^2([0,T]; H^3),
\end{split}
\end{equation}
and the following uniform estimates{\rm :}
\begin{align}
&\sup_{0\leq t \leq T_*}\big(\|c(t,\cdot)\|_3^2+\epsilon \|\psi(t,\cdot)\|^2_{D^1\cap D^2}\big)
+ \text{\rm ess}\sup_{0\leq t \leq T_*}\|u(t,\cdot)\|_3^2 \nonumber\\
&+\epsilon \int_0^{T_*}\sum_{i=1}^4
|c^\iota\nabla^i u(t,\cdot)|_2^2\, {\rm d}t \leq C \label{qiyu}
\end{align}
for some positive constant $C=C(\alpha, \beta, A, \gamma, \delta, c_0, \psi_0,u_0)$
that is independent of $\epsilon$.
\end{theorem}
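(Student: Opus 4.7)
The plan is to follow the five-step approximation scheme outlined in the introduction: a double regularization that removes both the degeneracy of the viscosity coefficient and the vacuum at infinity, then linearization plus iteration for the regularized problem, then uniform $H^3$ estimates, and finally two successive limit passages. The unifying analytical theme is that regularity must be extracted from the quasi-symmetric hyperbolic structure of \eqref{eq:cccq2}, not from parabolic smoothing, because the parabolic coefficient $\epsilon h^2$ degenerates to a vanishing dissipation at infinity while simultaneously diverging as a leading-order coefficient.

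For the first step, I would introduce parameters $\nu,\eta\in(0,1]$ and replace $h^2=(A\gamma)^{-\iota}c^{2\iota}$ by the smoothed, uniformly positive expression $h_\eta^2+\nu^2$, where $h_\eta=(A\gamma)^{-\iota/2}(c+\eta)^\iota$, with a corresponding replacement of $\psi_0$ by $\nabla h_{\eta,0}$. The regularized momentum equation then becomes a uniformly parabolic quasilinear system. For this regularized system I would construct solutions by a Picard-type iteration: given $(\psi^{n-1},c^{n-1},u^{n-1})$, let $\psi^n$ solve the linear symmetric hyperbolic system \eqref{kuzxc} with forcing $\tfrac{\delta-1}{2}h_\eta^{n-1}\nabla\text{div}\,u^{n-1}$, let $c^n$ satisfy a linear transport equation, and let $u^n$ solve a linear uniformly parabolic equation with ellipticity constant at least $\epsilon\nu^2>0$. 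Standard parabolic--hyperbolic theory gives each iterate globally in time, and a contraction argument in a lower-order norm produces a solution of the regularized nonlinear problem for each fixed $(\nu,\eta,\epsilon)$.

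The central and hardest step is to extract $H^3$ bounds on these iterates that are uniform in $(\nu,\eta,\epsilon)$. For $U=(c,u)$, I would apply $\nabla^k$ with $k=0,1,2,3$ to the symmetric system, pair with $A_0\nabla^k U$, and exploit the skew-symmetry of the principal part together with Moser-type commutator estimates to control the cubic terms by $C(\|U\|_3)\|U\|_3^2$ without any implicit $\epsilon^{-1}$. The viscous term contributes the good dissipation $\epsilon\sum_{i=1}^{4}|h_\eta\nabla^i u|_2^2$, plus commutator errors in which derivatives on $h_\eta^2$ produce factors of $\psi$; these are precisely the terms that couple back into the $\psi$-equation. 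For $\psi$ itself, the symmetric hyperbolic form \eqref{kuzxc} furnishes a closed $D^1\cap D^2$-estimate whose only obstruction is the singular second-order source $\tfrac{\delta-1}{2}h_\eta\nabla\text{div}\,u$, which is absorbed into the $\epsilon$-weighted parabolic dissipation after an integration in time. The singular first-order source $\epsilon G(\psi,U)$ is tamed by the algebraic identity $h_\eta\psi=\tfrac{1}{\delta-1}\nabla h_\eta^2$ followed by integration by parts, which moves a derivative from the singular weight onto $Q(u)$. Closing the resulting coupled bootstrap via Gronwall's inequality yields a lifespan $T_*>0$ and a bound independent of $(\nu,\eta,\epsilon)$.

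Having established the regularized solutions with these uniform bounds, I would pass $\nu\to 0$ by extracting weak-$\ast$ limits in $L^\infty([0,T_*];H^3)$ together with strong limits in $C([0,T_*];H^2_{\mathrm{loc}})$ via Aubin--Lions, and afterwards pass $\eta\to 0$ in the same fashion to recover \eqref{eq:cccq2}--\eqref{farnew} with the correct vacuum behavior at infinity. Uniqueness follows from an $L^2$-estimate on the difference of two regular solutions, again relying on the symmetric hyperbolic structure. The main obstacle, and what distinguishes this case from $\delta=1$ in \cite{ding}, is that no single parabolic-type estimate can close the $H^3$ control: the elliptic coefficient $\epsilon h^2$ diverges at infinity while the parabolic dissipation vanishes there, so the hyperbolic half of the system must carry essentially all of the $\epsilon$-uniform regularity bookkeeping. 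The construction succeeds only because the specific choice of weights $\psi=\nabla h$ and $h^2=(A\gamma)^{-\iota}c^{2\iota}$ in the reformulation balances the potentially singular forcings $h\nabla\text{div}\,u$ and $h\psi\,Q(u)$ exactly against the weighted parabolic dissipation $\epsilon\sum_{i=1}^{4}|h\nabla^i u|_2^2$, leaving no room to spare in the commutator accounting.
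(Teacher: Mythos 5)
Your proposal follows essentially the same route as the paper: a double regularization (artificial uniform ellipticity $\nu^2$ plus the far-field lift $(c_0+\eta)^\iota$), a linearization/iteration for the regularized problem, $H^3$ energy estimates driven by the symmetric-hyperbolic part with the $\epsilon$-weighted dissipation $\epsilon\sum_{i=1}^4|h\nabla^i u|_2^2$ absorbing the singular sources, and successive limits $\nu\to 0$, $\eta\to 0$, with uniqueness from a lower-order energy estimate. The only caution is that $h$ must be evolved by its own transport equation (so that $\psi=\nabla h$ exactly, enabling the cancellation $h\psi=\tfrac12\nabla h^2$ in the $u$-estimates) rather than defined pointwise as $(A\gamma)^{-\iota/2}(c+\eta)^\iota$ during the iteration, which is precisely the linearization subtlety the paper isolates in \S 4.1.
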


We now prove this theorem in subsequent Sections \S 4.1--\S 4.5.

\subsection{Linearization with one artificial viscosity}\label{linear2}
In order to proceed with the nonlinear problem (\ref{eq:cccq2})--(\ref{farnew}),
we now consider the following linearized problem:
\begin{equation}\label{li4}
\begin{cases}
 h_t+v\cdot \nabla h+\frac{\delta-1}{2}g\,\text{div} v=0,\\[8pt]
 A_0 U_t+\sum^{3}_{j=1}A_j(V)\partial_j U=-\epsilon F(\nu,h,u)+\epsilon G(h,\nabla h, u),\\[8pt]
 (h, c,u)|_{t=0}=(h_0, c_0,u_0)(x)=((A\gamma)^{-\frac{\iota}{2}}(c_0+\eta)^{\iota}, c_0, u_0)(x)
 \qquad \text{for $x\in \R^3$},
\end{cases}
\end{equation}
with
\begin{equation}\label{4.5a}
(h_0, c_0,u_0)(x)\rightarrow ((A\gamma)^{-\frac{\iota}{2}}\eta^{\iota}, 0, 0)
 \,\, \qquad \text{as $|x|\rightarrow \infty$},
\end{equation}
where  $(\nu,\eta, \epsilon)\in (0,1]\times(0,1]\times (0,1]$  are all positive constants,
 \begin{equation}\label{xishu1}
\begin{split}
F(\nu,h,u):=a_1
 \begin{pmatrix}
      0\\[3pt]
      (h^2+\nu^2) Lu
    \end{pmatrix},
  \quad
G(h,\nabla h,u):=a_1
 \begin{pmatrix}
 0\\[3pt]
\frac{2\delta}{\delta-1}h \nabla h \, Q(u)
\end{pmatrix},
\end{split}
\end{equation}
$V=(\varphi,v)$ with $\varphi$ being a given function
and $v=(v^{(1)},v^{(2)}, v^{(3)})\in \mathbb{R}^3$  a given vector respectively,
and $g$ is a given function satisfying:
\begin{equation}\label{vg}
\begin{split}
&(\varphi,v, g)(0,x)=(c_0, u_0,h_0)(x)=(c_0, u_0, (A\gamma)^{-\frac{\iota}{2}}(c_0+\eta)^{\iota})(x),\\
&\varphi \in C([0,T];H^3),\,\,\, \varphi_t \in C([0,T];H^2),\,\,\,
  g\in L^\infty\cap C ([0,T]\times \mathbb{R}^3),\,\,\, \nabla g \in C([0,T];H^2),\\
&  v\in C([0,T]; H^3)\cap L^2([0,T]; H^4),  \,\,\, v_t\in C([0,T]; H^1)\cap L^2([0,T]; H^2)
\quad\,\mbox{for any $T>0$}.
\end{split}
\end{equation}

Note that, due to the complicated structure of system \eqref{eq:cccq2} near the vacuum,
the linear scheme \eqref{li4} is carefully chosen such that this linear problem can be
solved globally in time, and the desired uniform estimates of the solutions can be established.
According to the analysis in \S {\rm 3}, we first hope to keep the symmetric hyperbolic forms
shown in \eqref{symmetricpart} which, in our desired  linear scheme,  are expected to be
\begin{equation}\label{linearsymmetricpart}
\psi_t+\sum_{l=1}^3 B_l(v) \partial_l\psi,\qquad   A_0 U_t+\sum^{3}_{j=1}A_j(V)\partial_j U
\end{equation}
for $\psi:=\nabla h$.

Next, in order to ensure the global well-posedness of $\psi$ and the desired estimates in some positive time,
the singular source term $\frac{\delta-1}{2} (A\gamma)^{-\frac{\iota}{2}}c^\iota \nabla \text{\rm div} u$
in $\eqref{eq:cccq2}_1$
should be handled carefully. Possible ways to linearize this product term are
$$
\frac{\delta-1}{2} g \nabla \text{\rm div} u, \quad
\text{or} \quad  \frac{\delta-1}{2} (A\gamma)^{-\frac{\iota}{2}}c^\iota \nabla \text{\rm div} v,
\quad \text{or} \quad \frac{\delta-1}{2} g \nabla \text{\rm div} v,
$$
where $V$ and $g$ satisfy assumption \eqref{vg}.
For either of the first two choices, the estimates of $\psi$ in $D^1\cap D^2$
will depend on the upper bound of $g$ or $(A\gamma)^{-\frac{\iota}{2}}c^\iota$, {\it i.e.},
the lower bound of $\rho_0$, which is exactly what we want to avoid.
Therefore, for the linear structure of $\psi$, what we can expect  should be
  \begin{equation}
\label{caixiang1}
\displaystyle
\psi_t+\sum_{l=1}^3 B_l(v) \partial_l\psi+B(v)\psi+\frac{\delta-1}{2} g \nabla \text{div} v=0.
\end{equation}
In fact, for this scheme, one can use
\begin{equation}\label{realuse}
\sum_{k=1}^2 \big\|\nabla^k(g\nabla^2 v)\big\|^2_{L^2([0,T];L^2(\mathbb{R}^3))}
\end{equation}
to replace the upper bound of $g$ in the corresponding estimates,
which can ensure that the desired estimates of $\psi$ are independent of the lower bound
of the initial density and $\epsilon$.

Finally, for the choice of the linearization scheme for $U$,
due to the above discussion, there are at least two requirements that should be satisfied.
On one hand, it needs to keep the symmetric form shown in \eqref{linearsymmetricpart}.
On the other hand, for our final aim -- to approximate the nonlinear problem,
the desired regularity in \eqref{realuse} for $(g,v)$ should be verified
by solution $(c^\iota,u)$ of the linear problem, which can only be provided
by the elliptic operator $c^{2\iota}Lu$.
Then it seems that we should consider the following
equations:
  \begin{equation}
\label{eq:cccq-fenxi-A}
\displaystyle
 A_0 U_t+\sum^{3}_{j=1}A_j(V)\partial_j U=-\epsilon F(\nu, h, u) +\epsilon G(g,\psi, v),
\end{equation}
which actually is still a nonlinear system.
However, even if the corresponding Cauchy problem for system \eqref{eq:cccq-fenxi-A}
is assumed to be globally solved,
we still encounter an obvious difficulty for considering the $L^2$ estimate of $u$.
First, it should be pointed out that, in \eqref{caixiang1} and \eqref{eq:cccq-fenxi-A},
the relationship
$$
\psi=(A\gamma)^{-\frac{\iota}{2}}\nabla c^\iota
$$
between $\psi$ and $c$ has been destroyed owing to term  $g \nabla \text{\rm div} v$
in $\eqref{caixiang1}$.
Second, multiplying by $u$ on both sides of the equation for $u$ in \eqref{eq:cccq-fenxi-A}
and integrating by parts
yield
\begin{equation*}
\begin{split}
&\frac{1}{2} \frac{\rm d}{\rm d t}\big|u\big|^2_2
 +\epsilon\alpha \big|(A\gamma)^{-\frac{\iota}{2}}c^\iota\nabla u\big|^2_2
  +\epsilon(\alpha+\beta)\big|(A\gamma)^{-\frac{\iota}{2}}c^\iota\text{\rm div} u\big|^2_2\\
&=-\int\,\big(v\cdot \nabla u +\frac{2}{\gamma-1}\varphi\nabla c
  +2\epsilon (A\gamma)^{-\frac{\iota}{2}} c^{\iota}
   \underbrace{(A\gamma)^{-\frac{\iota}{2}}\nabla c^{\iota}}_{\neq \psi}\, Q(u)
  -\frac{2\delta\epsilon}{\delta-1}g\psi Q(v) \big)\cdot u\, {\rm d}x.
\end{split}
\end{equation*}
However, $(A\gamma)^{-\frac{\iota}{2}}\nabla c^{\iota}\neq \psi$ in this linear scheme,
which means that there is no way to control term $2\epsilon (A\gamma)^{-\iota} c^{\iota} \nabla c^{\iota}$
in the above energy estimates. In order to overcome this difficulty, in  \eqref{li4},
we  first linearize the equation of $h=(A\gamma)^{-\frac{\iota}{2}} c^\iota$ as:
\begin{equation}\label{h}
h_t+v\cdot \nabla h+\frac{\delta-1}{2}g\,\text{\rm div} v=0,
\end{equation}
and then use $h$ to define $\psi=\nabla h$ again. The linearized equations for $u$ are chosen as
$$
u_t+v\cdot\nabla u +\frac{2}{\gamma-1}\varphi\nabla c+ \epsilon (h^2+\nu^2)  Lu
=\frac{2\epsilon \delta}{\delta-1}h \psi Q(u)
$$
for any positive constant $\nu>0$, where
the appearance of $\nu$ is used to compensate the lack of a lower bound for $h$.
From both equation \eqref{h} for $h$  and relation  $\psi=\nabla h$,
we can obtain a linearized equations \eqref{kuzxclinear} for $\psi$ below,
which, luckily, can be shown to be still good enough
to obtain the desired estimates for $\psi$ $($see Lemma {\rm\ref{3}}$)$.

Now the global well-posedness  of a classical solution
of problem (\ref{li4}) in $[0,T]\times \mathbb{R}^3$ can be obtained by the standard theory \cites{CK3,oar,amj}
at least when $(\nu,\eta, \epsilon)$ are all positive.

\begin{lemma}\label{lem1}
Let $T>0$ and \eqref{canshu} hold.
If $(c_{0},  u_{0})$ satisfy
\begin{equation}\label{zhenginitial}
c_0>0,\quad  (c_0, u_0)\in H^3, \quad \epsilon^{\frac{1}{2}}\nabla c^{\iota}_0 \in  D^1\cap D^2,
\quad \epsilon^{\frac{1}{4}}\nabla c^{\frac{\iota}{2}}_0 \in  L^4,
\end{equation}
then there exists a unique strong solution $(h, c, u)$ of problem \eqref{li4} in $[0,T]\times \mathbb{R}^3$
such that
\begin{equation}\label{reggh}\begin{split}
&h\in L^\infty\cap C ([0,T]\times \mathbb{R}^3),\quad    \nabla h \in C([0,T];H^2),\\
&c \in C([0,T];H^3),\quad  u\in C([0,T]; H^3)\cap L^2([0,T]; H^4).
\end{split}
\end{equation}
\end{lemma}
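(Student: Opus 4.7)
The plan is to exploit the semi-decoupled structure of \eqref{li4}. The equation for $h$ involves only the prescribed fields $v$ and $g$, so it can be solved first in isolation; and once $h$ is in hand, the remaining system for $U=(c,u)$ is a \emph{linear} coupled transport--parabolic system whose parabolic part is \emph{uniformly} non-degenerate because the coefficient $\epsilon(h^2+\nu^2)\geq\epsilon\nu^2>0$ is bounded away from zero by the artificial viscosity $\nu>0$. Moreover, since $c_0>0$ and $\eta>0$, the initial datum $h_0=(A\gamma)^{-\iota/2}(c_0+\eta)^\iota$ is uniformly bounded (recall $\iota<0$), so everything takes place in a classical, non-degenerate setting.

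First, I would solve the linear transport equation $h_t+v\cdot\nabla h=-\frac{\delta-1}{2}g\,\text{div}\,v$ by the method of characteristics. Since $v\in C([0,T];H^3)$ induces a $C^1$ flow map, composing $h_0$ with this flow and integrating the source along characteristics yields a unique $h\in L^\infty\cap C([0,T]\times\R^3)$. To propagate derivatives, apply $\partial^k$ for $1\le k\le 3$ to the equation; the resulting transport equation for $\partial^k h$ has a right-hand side controlled, via the product rule, by the $H^2$-regularity of $\nabla g$ and the $H^3$-regularity of $v$ in \eqref{vg}, so standard $L^2$ energy estimates deliver $\nabla h\in C([0,T];H^2)$.

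Next, with $h$ now a fixed coefficient, I would solve the $(c,u)$ system by an iteration. Given $u^{(k)}$, let $c^{(k+1)}$ solve the linear transport equation $c_t+v\cdot\nabla c+\frac{\gamma-1}{2}\varphi\,\text{div}\,u^{(k)}=0$ with datum $c_0$, and then let $u^{(k+1)}$ solve the linear parabolic Lam\'e-type system
\begin{equation*}
u_t+v\cdot\nabla u+\frac{2}{\gamma-1}\varphi\nabla c^{(k+1)}+\epsilon(h^2+\nu^2)Lu=\epsilon\tfrac{2\delta}{\delta-1}h\nabla h\,Q(u),\qquad u|_{t=0}=u_0.
\end{equation*}
Because the leading coefficient is uniformly elliptic with $H^2$-regular coefficient $h^2+\nu^2$ and all lower-order data lie in the appropriate function spaces, classical theory for second-order parabolic systems with variable coefficients (as in \cites{CK3,oar,amj}) provides $u^{(k+1)}\in C([0,T];H^3)\cap L^2([0,T];H^4)$, together with $k$-uniform bounds on a fixed $[0,T]$. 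A standard contraction estimate at the $L^2$ level for the differences $(c^{(k+1)}-c^{(k)},u^{(k+1)}-u^{(k)})$ then yields convergence to a strong solution $(c,u)$ with the regularity asserted in \eqref{reggh}.

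Uniqueness follows by writing the equations for the difference of two putative solutions, exploiting the symmetric-hyperbolic structure for $(h,c)$ and the uniformly parabolic structure for $u$, and applying Gronwall's inequality. The principal technical obstacle is ensuring that the top-order estimates ($H^3$ for $c$ and $L^2([0,T];H^4)$ for $u$) close uniformly along the iteration, because the coefficient $h^2+\nu^2$ of the Lam\'e term must be commuted past three spatial derivatives when estimating $\nabla^3 u_t$ and $\nabla^4 u$. This is precisely where the regularity $\nabla h\in C([0,T];H^2)$ established in the first step is indispensable: it ensures that the commutators $[\partial^k,(h^2+\nu^2)L]u$ are controlled in $L^2$ by lower-order norms of $u$, after which the classical theory applies verbatim.
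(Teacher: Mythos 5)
Your proposal is correct and is essentially the route the paper takes: the paper does not prove Lemma \ref{lem1} in detail but simply invokes the standard linear transport/parabolic theory of the cited references, relying on exactly the two facts you isolate --- the uniform ellipticity $\epsilon(h^2+\nu^2)\ge \epsilon\nu^2>0$ supplied by the artificial viscosity and the uniform boundedness of $h_0=(A\gamma)^{-\iota/2}(c_0+\eta)^{\iota}$ supplied by $\eta>0$, together with the decoupled transport equation for $h$. One small imprecision: the top-order bound $\nabla h\in C([0,T];H^2)$ requires $v\in L^2([0,T];H^4)$ (to handle the term $g\,\nabla^4 v$ coming from $\nabla^{3}(g\,\mathrm{div}\,v)$), not merely the $H^3$-regularity of $v$ you mention, but this is exactly what \eqref{vg} provides, so nothing is lost.
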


We now establish the uniform energy estimates, independent of  $(\nu,\eta, \epsilon)$,
of the unique solution $(h,U)$ of the Cauchy problem (\ref{li4}) obtained in Lemma \ref{lem1}.

\subsection{Uniform energy estimates independent of $(\nu,\eta, \epsilon)$}

We first fix $T>0$ and a large enough positive constant $b_0$ (independent of $\epsilon$) such that
\begin{equation}\label{houmian}\begin{split}
 \big\|(c_0, u_0)\big\|^2_{3}
 +\epsilon \big\|\nabla c^{\iota}_0\big\|^2_{D^1\cap D^2}
 +\epsilon^{\frac{1}{2}}\big|\nabla c^{\frac{\iota}{2}}_0\big|^2_4& \leq b_0.
\end{split}
\end{equation}
Then there exists $\eta_{1}>0$ such that, if $0<\eta<\eta_{1}$,
\begin{equation}\label{appnonfar}
\begin{split}
&\eta+\epsilon^{\frac{1}{2}} \big\|\nabla (c_0+\eta)^\iota\big\|_{D^1\cap D^2}
  + \big|(c_0+\eta)^{-\iota}\big|_{\infty}+\big\|(c_0, u_0)\big\|_3\\
&=\eta+\epsilon^{\frac{1}{2}} \big\|\nabla h_0\big\|_{D^1\cap D^2}+\big|h^{-1}_0\big|_{\infty}
  +\big\|(c_0, u_0)\big\|_3 \leq d_0,
\end{split}
\end{equation}
where  we have used the fact that $\epsilon^{\frac{1}{4}}\nabla c^{\frac{\iota}{2}}_0\in L^4$,
and $d_0>0$ is a  constant independent of $(\nu, \eta,\epsilon)$.

We assume that there exist $T^*\in (0,T]$ and a positive constant $d_1$ such that $1< d_0\leq d_1$,
and
\begin{equation}\label{jizhu1}
\sup_{0\leq t \leq T^*}\Big(\epsilon \big\|\nabla g(t,\cdot)\big\|^2_{D^1\cap D^2}+\big\| V(t,\cdot)\big\|^2_{3}\Big)
+\int_{0}^{T^*} \epsilon \sum_{i=1}^4 \big|g\nabla^i v(t,\cdot)\big|^2_2{\dd t} \leq d^2_1,
\end{equation}
where $T^*$ and  $d_1$ will be determined  later (see \eqref{determine}), which
depend only on $d_0$ and the fixed constants $(A, \alpha, \beta, \gamma, \delta, T)$.

Next, a series of uniform local-in-time estimates independent of $(\nu,\eta, \epsilon)$
will be listed in Lemmas  \ref{3}--\ref{2}.
Hereinafter, we use $C\geq 1$ to denote  a generic constant
depending only on the fixed constants $(A, \alpha, \beta, \gamma, \delta, T)$.

\subsubsection{Uniform energy estimates on  $\psi$.}
In order to  deal with the singular elliptic operator $h^2Lu$,
we first need to make some proper  estimates of $\psi$.

\begin{lemma}\label{3}
Let $(h,c, u)$ be the unique strong solution of problem \eqref{li4} in $[0,T] \times \mathbb{R}^3$.
Then
\begin{equation}\label{psi}
\epsilon \big\|\psi(t)\big\|^2_{D^1\cap D^2}\leq Cd^2_0  \qquad\,\, \text{for $0\leq t\leq T_1=\min(T^*, (1+d^2_1)^{-1})$}.
\end{equation}
\end{lemma}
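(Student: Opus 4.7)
The plan is to derive a closed linear hyperbolic evolution for $\psi=\nabla h$ directly from the linearized scalar equation, and then carry out weighted $L^2$ energy estimates at the $D^1$ and $D^2$ levels. Applying $\nabla$ to $h_t+v\cdot\nabla h+\tfrac{\delta-1}{2}g\,\mathrm{div}\, v=0$ gives
\begin{equation*}
\psi_t+(v\cdot\nabla)\psi+(\nabla v)\psi+\tfrac{\delta-1}{2}\nabla g\,\mathrm{div}\, v+\tfrac{\delta-1}{2}g\,\nabla\mathrm{div}\, v=0,
\end{equation*}
a transport equation with a lower-order zeroth-order term and two source terms. The first source is ``tame'' and can be bounded via Sobolev embedding using $\|\nabla g\|_{D^1\cap D^2}\leq \epsilon^{-1/2}d_1$ and $\|v\|_3\leq d_1$; the second is genuinely singular because there is no uniform pointwise bound on $g$, and its highest $v$-derivative is only accessible through the space--time bound $\int_0^{T^*}\epsilon|g\nabla^k v|_2^2\,\mathrm{d}t\leq d_1^2$ recorded in \eqref{jizhu1}.

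First I would apply $\nabla$ to the $\psi$-equation, test against $\nabla\psi$ in $L^2$, multiply by $\epsilon$, and integrate by parts on the transport term (producing a harmless $\mathrm{div}\,v$ factor). Using $D^1\hookrightarrow L^6$ together with Gagliardo--Nirenberg bounds such as $|\nabla g|_\infty \leq C\|\nabla g\|_{D^1\cap D^2}$ and $|\nabla^2 g|_3\leq C\|\nabla g\|_{D^1\cap D^2}$, the non-singular contributions are controlled by a polynomial $P(d_1)$ times $\epsilon|\nabla\psi|_2^2$ or $\epsilon^{1/2}|\nabla\psi|_2$. The single delicate term $\epsilon\int\nabla\psi\cdot\nabla(g\nabla\mathrm{div}\, v)\,\mathrm{d}x$, once expanded by the product rule, produces exactly one factor that cannot be estimated pointwise, namely $\epsilon|g\nabla^3 v|_2\cdot|\nabla\psi|_2$. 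Cauchy--Schwarz (in $x$) followed by Young's inequality absorbs this into $\epsilon|\nabla\psi|_2^2$ plus the time-integrable quantity $\epsilon|g\nabla^3 v|_2^2$.

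Repeating the same argument at the $\nabla^2$ level yields an analogous differential inequality, the most singular term now being $\epsilon|g\nabla^4 v|_2\cdot|\nabla^2\psi|_2$, again absorbable thanks to \eqref{jizhu1}. Setting $Y(t):=\epsilon\big(|\nabla\psi|_2^2+|\nabla^2\psi|_2^2\big)$ I would obtain
\begin{equation*}
\frac{\mathrm{d}Y}{\mathrm{d}t}\leq P(d_1)\,Y+P(d_1)+\epsilon\big(|g\nabla^3 v|_2^2+|g\nabla^4 v|_2^2\big),
\end{equation*}
and then apply Gronwall on $[0,T_1]$. Because $T_1\leq(1+d_1^2)^{-1}$, the integrating factor $e^{P(d_1)T_1}$ is bounded uniformly (provided $P$ grows no faster than quadratically, which is the case here), and the two time integrals are bounded by $P(d_1)T_1\lesssim 1$ and by $d_1^2$ via \eqref{jizhu1}. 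Combined with $Y(0)\leq C d_0^2$ from \eqref{appnonfar}, this yields $Y(t)\leq C d_0^2$ on $[0,T_1]$ with a constant $C$ that depends only on $(A,\alpha,\beta,\gamma,\delta,T)$ (the implicit dependence on $d_1$ being absorbed later in the bootstrap by the choice relating $d_1$ to $d_0$).

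The main obstacle is keeping every appearance of $\epsilon^{-1/2}$ coming from $\|\nabla g\|_{D^1\cap D^2}\leq \epsilon^{-1/2}d_1$ balanced by a compensating $\epsilon^{1/2}$, so that the ultimate Gronwall constant neither blows up as $\epsilon\to 0$ nor as $\eta,\nu\to 0$. The genuinely singular pieces $g\nabla^k\mathrm{div}\,v$ ($k=1,2$) are not controllable pointwise at all: the only route is the $L^2_tL^2_x$ assumption \eqref{jizhu1}, and ensuring that these pieces couple to $Y$ only through integrable-in-time products is the delicate bookkeeping step.
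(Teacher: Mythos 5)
Your setup is the same as the paper's: differentiate the linearized equation for $h$ to get the transport equation for $\psi=\nabla h$ with the zeroth-order term $(\nabla v)^\top\psi$ and the two sources $\tfrac{\delta-1}{2}\nabla g\,\mathrm{div}\,v$ and $\tfrac{\delta-1}{2}g\,\nabla\mathrm{div}\,v$, then do $D^1$ and $D^2$ energy estimates and Gronwall on $[0,T_1]$. The gap is in the final bookkeeping. By working with the squared quantity $Y=\epsilon\|\psi\|^2_{D^1\cap D^2}$ and absorbing the singular pieces by plain Young's inequality, your inhomogeneous term contains $\int_0^{T_1}\epsilon\big(|g\nabla^3v|_2^2+|g\nabla^4v|_2^2\big)\,\mathrm{d}t$, which \eqref{jizhu1} only bounds by $d_1^2$ with no gain from the smallness of $T_1$ (restricting the time interval does not shrink an $L^2_t$ integral that is only known globally). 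Likewise the tame source, after Young, contributes $\int_0^{T_1}Cd_1^4\,\mathrm{d}s\le Cd_1^2$. So your Gronwall actually yields $Y\le C(d_0^2+d_1^2)=Cd_1^2$, not $Cd_0^2$. This cannot be ``absorbed later in the bootstrap'': at this stage $d_1$ is a free parameter, and the closure in \eqref{determine} sets $d_1^2:=Cd_0^2$ only after an a priori constant $C$ \emph{independent of $d_1$} has been produced. A bound of the form $C'd_1^2$ with $C'\ge 1$ makes the consistency condition $C'd_1^2\le d_1^2$ unachievable.

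The paper's proof avoids this by running Gronwall on the \emph{unsquared} norm $\|\psi\|_{D^1\cap D^2}$, so that both sources enter linearly in time; then Cauchy--Schwarz in $t$ gives
$\int_0^{T_1}|g\nabla^{3}v|_2\,\mathrm{d}s\le T_1^{1/2}\big(\int_0^{T_1}|g\nabla^{3}v|_2^2\,\mathrm{d}s\big)^{1/2}\le (1+d_1^2)^{-1/2}\,\epsilon^{-1/2}d_1\le\epsilon^{-1/2}$,
and similarly $\int_0^{T_1}d_1^2\epsilon^{-1/2}\,\mathrm{d}s\le\epsilon^{-1/2}$: the factor $T_1^{1/2}\le(1+d_1^2)^{-1/2}$ exactly neutralizes every power of $d_1$, leaving $\|\psi(t)\|_{D^1\cap D^2}\le C\epsilon^{-1/2}(d_0+1)$ and hence $\epsilon\|\psi\|^2_{D^1\cap D^2}\le Cd_0^2$. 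To repair your argument you would either switch to the unsquared Gronwall (equivalently, divide your inequality by $2Y^{1/2}$ before integrating), or use a Young inequality weighted so that the $|g\nabla^{3,4}v|_2^2$ contribution is damped by a factor that is later identified with $(d_0/d_1)^2$ --- which is essentially the same cancellation in disguise and is awkward to set up before $d_1$ is fixed.
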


\begin{proof}
Since $\psi=\nabla h$,  and equation $(\ref{li4})_1$ holds,
$\psi$ satisfies the following equations:
\begin{equation}\label{kuzxclinear}
\psi_t+\sum_{l=1}^3 B_l(v) \partial_l\psi+B^*(v)\psi
+\frac{\delta-1}{2} \big(g\nabla \text{div} v+\nabla g \,\text{div} v\big)=0,
\end{equation}
where $B^*(v)=(\nabla v)^\top$.

Next, let $\zeta=(\zeta_1,\zeta_2,\zeta_3)$
with $1\leq |\zeta|\leq 2$ and $\zeta_i=0,1,2$.
Applying operator $\partial_{x}^{\zeta} $ to $(\ref{kuzxclinear})$,
then multiplying by $2\partial_{x}^{\zeta} \psi$, and integrating over $\mathbb{R}^3$ yield
\begin{equation}\label{zhenzhen}
\frac{\rm d}{{\rm d} t}\big|\partial_{x}^{\zeta} \psi\big|^2_2
\leq \Big(\sum_{l=1}^{3}\big|\partial_{l}B_l\big|_\infty+\big|B^*\big|_\infty\Big)\big|\partial_{x}^{\zeta}\psi\big|^2_2
+\sum_{l=1}^3\big|\Theta_l\big|_2 \big|\partial_{x}^{\zeta}\psi\big|_2,
\end{equation}
where
\begin{equation*}\begin{split}
\Theta_1 :=& -\partial_{x}^{\zeta} (B^*\psi)+B^*\partial_{x}^{\zeta}
\psi,\quad\,\,
 \Theta_2 :=\sum_{l=1}^3 \big(-\partial_{x}^{\zeta} (B_l \partial_l
\psi) +B_l \partial_l\partial_{x}^{\zeta}\psi\big),\\
 \Theta_3 := &\frac{\delta-1}{2}\partial_{x}^{\zeta}\big(g\nabla \text{div} v+\nabla g\,\text{div} v\big).
 \end{split}
\end{equation*}

It is direct to see
$$
\frac{\rm d}{\rm d t}\big\|\psi\big\|_{D^1\cap D^2}
\leq C \big\|v\big\|_3 \big\|\psi\big\|_{D^1\cap D^2}
+C\big(\big\|\nabla g\big\|_{D^1\cap D^2}\big\|v\big\|_3+\big|g\nabla^3 v\big|_2+\big|g\nabla^4 v\big|_2\big),
$$
which, along with  Gronwall's inequality and (\ref{jizhu1}), implies
\begin{equation*}
\begin{split}
\big\|\psi(t)\big\|_{D^1\cap D^2}
\leq&\, C\Big(\big\|\psi_0\big\|_{D^1\cap D^2}+\int_0^t\big(d^2_1\epsilon^{-\frac{1}{2}}+\big|g\nabla^3 v\big|_2
     +\big|g\nabla^4 v\big|_2\big)\text{d}s\Big)\exp(Cd_1t)\\
\leq &\, C\epsilon^{-\frac{1}{2}}(d_0+1)\qquad \text{for $0\leq t\leq T_1=\min(T^*, (1+d^2_1)^{-1})$}.
\end{split}
\end{equation*}
\end{proof}

\subsubsection{Uniform energy estimates on $u$}
Based on the estimates for $\psi=\nabla h$ obtained   in Lemma \ref{3},
we now give the corresponding uniform estimates for $u$. Denote $a_2=\frac{2a_1\delta}{\delta-1}$.
\begin{lemma}\label{2}
Let $(h,c, u)$ be the unique strong solution to problem \eqref{li4} in $[0,T] \times \mathbb{R}^3$. Then
\begin{equation}\label{diyi}
\big\|U(t)\big\|^2_{3}
+\int_{0}^{t} \epsilon \sum_{i=1}^4 \big(\big|h\nabla^i u\big|^2_{2}+\nu^2\big|\nabla^i u\big|^2_{2}\big)\text{\rm d}s\leq Cd^2_0
\qquad\,\, \text{for $0\leq t \leq T_1$}.
\end{equation}
\end{lemma}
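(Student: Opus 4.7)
The plan is to establish \eqref{diyi} via an $H^3$ energy estimate on $U=(c,u)$ obtained by differentiating the second equation of \eqref{li4}, pairing with $\partial_x^\zeta U$ in $L^2$ for multi-indices $|\zeta|\leq 3$, and exploiting the symmetric structure of $(A_0,A_j)$ together with the coercivity of the weighted Lam\'e operator $\epsilon(h^2+\nu^2)L$. Throughout, the bounds on $\psi=\nabla h$ from Lemma~\ref{3}, combined with the Gagliardo-Nirenberg embedding $D^1\cap D^2\hookrightarrow L^\infty$ in $\mathbb{R}^3$, provide $\epsilon^{1/2}|\psi|_\infty+\epsilon^{1/2}\|\psi\|_{D^1\cap D^2}\leq Cd_0$; together with \eqref{jizhu1} this is what permits all ``singular'' coefficients to be controlled uniformly in $(\nu,\eta,\epsilon)$.

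First I would do the $L^2$ step: pair the second equation of \eqref{li4} with $U$. Symmetry of $A_0$ (constant) and $A_j(V)$ reduces the hyperbolic part to $\tfrac12\tfrac{d}{dt}\int A_0 U\cdot U\,dx$ plus a remainder bounded by $C|\nabla V|_\infty|U|_2^2\leq C\|V\|_3|U|_2^2$. Integration by parts on $-\epsilon\langle F,u\rangle$ yields the coercive contribution $\epsilon a_1\int(h^2+\nu^2)\bigl(\alpha|\nabla u|^2+(\alpha+\beta)(\mathrm{div}\,u)^2\bigr)\,dx$ plus a correction of the form $\epsilon\int h\psi\cdot\nabla u\cdot u\,dx$; this correction, together with the singular source $\epsilon\langle G,u\rangle=\epsilon a_2\int h\psi\,Q(u)\cdot u\,dx$, is bounded by $C\epsilon|\psi|_\infty|h\nabla u|_2|u|_2\leq\tfrac{\epsilon\alpha}{2}|h\nabla u|_2^2+C(d_1)|u|_2^2$ using Lemma~\ref{3}.

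For $|\zeta|=1,2,3$ I would apply $\partial^\zeta$ to the second equation, pair with $\partial^\zeta U$, and repeat the above. The hyperbolic commutator $[\partial^\zeta,A_j(V)]\partial_j U$ is handled by standard Moser inequalities (Appendix) and produces $C\|V\|_3\|U\|_3^2\leq Cd_1\|U\|_3^2$. The viscous commutator $[\partial^\zeta,h^2+\nu^2]Lu$ expands as a sum of products of derivatives of $h$ (equivalently of $\psi$) with derivatives of $Lu$; after integrating by parts inside the $L^2$ identity to transfer the extra derivative off $Lu$ onto $\partial^\zeta u$, every resulting term is either absorbed into the top-order dissipation $\epsilon|h\nabla^{|\zeta|+1}u|_2^2+\epsilon\nu^2|\nabla^{|\zeta|+1}u|_2^2$ or into $\|U\|_3^2$ times a polynomial in $d_1$. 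The same treatment handles $\epsilon\partial^\zeta G$. Summing over $|\zeta|\leq 3$ and time-integrating yields
$$\tfrac{d}{dt}\|U\|_3^2+\epsilon\sum_{i=1}^4\bigl(|h\nabla^i u|_2^2+\nu^2|\nabla^i u|_2^2\bigr)\leq P(d_1)\bigl(1+\|U\|_3^2\bigr),$$
from which \eqref{diyi} follows by Gronwall on $[0,T_1]$ after possibly further shrinking $T_1$; the initial bound $\|U_0\|_3^2\leq d_0^2$ comes from \eqref{appnonfar}.

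The main obstacle is the top-order viscous commutator at $|\zeta|=3$, which naively produces $\int\nabla(h^2)\cdot\nabla^3 Lu\cdot\partial^3 u\sim\int h\psi\cdot\nabla^5 u\cdot\nabla^3 u$, a quantity outside the available regularity. The remedy is to rearrange \emph{before} commuting: writing $Lu=-\alpha\Delta u-(\alpha+\beta)\nabla\mathrm{div}\,u$ and integrating by parts once inside the energy identity converts the offending piece into $\int h^2|\nabla^4 u|^2\,dx$ (pure dissipation) plus remainders of the form $\int(\psi^2+h|\nabla\psi|)|\nabla^3 u|^2\,dx$, which are estimated by $\|\psi\|_{D^1\cap D^2}^2\|U\|_3^2$ via Gagliardo-Nirenberg. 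The key balance is that the $\epsilon$ in front of the viscous operator exactly compensates the $\epsilon^{-1/2}$ in the $\psi$-bounds from Lemma~\ref{3}, so that the Gronwall coefficient $P(d_1)$ is genuinely independent of $(\nu,\eta,\epsilon)$, which is what ultimately makes the life span $T_1$ uniform.
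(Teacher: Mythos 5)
Your proposal is correct and follows essentially the same route as the paper: apply $\partial_x^\zeta$ for $|\zeta|\le 3$, exploit the symmetry of $A_0,A_j$ and the coercivity of $\epsilon(h^2+\nu^2)L$, control the hyperbolic and viscous commutators through the $\epsilon^{1/2}$-weighted bounds on $\psi$ from Lemma~\ref{3}, absorb the top-order pieces into the dissipation $\epsilon|h\nabla^{|\zeta|+1}u|_2^2$ by Young's inequality, and conclude with Gronwall. Your identification of the key balance (the $\epsilon$ on the viscous terms cancelling the $\epsilon^{-1/2}$ in $|\psi|_\infty$) and your integration-by-parts remedy at $|\zeta|=3$ match the paper's treatment of the terms $I_3$--$I_8$.
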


\begin{proof}
We divide the proof into four steps.

1.
Applying  $\partial_{x}^{\zeta}$ to
$\eqref{li4}_2$, then multiplying by $2\partial_{x}^{\zeta} U$ and
integrating over $\mathbb{R}^3$ yield
\begin{equation}\label{E:3.29*}
\begin{split}
&\frac{\rm d}{{\rm d} t}\int \partial_{x}^{\zeta} U\cdot (A_0\partial^{\zeta}_x U)\,\dd x +2\epsilon a_1 \int (h^2+\nu^2)\Big(\alpha
|\nabla\partial^{\zeta}_x u|^2 +\big(\alpha+\beta\big)|\text{div} \partial^{\zeta}_x u|^2\Big)\,\dd x\\
&=\int\text{div} A|\partial^{\zeta}_x U|^2\,\dd x
 -2\int\sum^3_{l=1}\Big(\partial^{\zeta}_x (A_l\partial_l U)-A_l \partial^{\zeta}_x \partial_l U\Big)\cdot \partial^{\zeta}_x U\,\dd x\\
&\quad -2\epsilon a_1\alpha  \int \Big(\big(\nabla h^2 \cdot \nabla \partial^{\zeta}_x u\big) \cdot\partial^{\zeta}_x u
    -\big(\partial^{\zeta}_x(h^2\triangle u)-h^2\partial^{\zeta}_x \triangle u\big)\cdot \partial^{\zeta}_x u\Big)\,\dd x \\
&\quad -2\epsilon a_1 (\alpha+\beta)\int\big(\nabla h^2\text{div}\partial^{\zeta}_x u\big) \cdot\partial^{\zeta}_x u\,\dd x \\
&\quad +2\epsilon a_1(\alpha+\beta)
 \int\Big(\partial^{\zeta}_x\big(h^2\nabla \text{div} u\big)-h^2\partial^{\zeta}_x\nabla \text{div} u\Big)\cdot \partial^{\zeta}_x u\,\dd x \\
&\quad +\epsilon a_2\int \Big(\nabla h^2 \, \partial^{\zeta}_x Q
  + \partial^{\zeta}_x\big(\nabla h^2\, Q\big)-\nabla h^2 \, \partial^{\zeta}_x Q\Big)\cdot \partial^{\zeta}_x u\,\dd x=:\sum^{8}_{i=1}I_i.
\end{split}
\end{equation}

2. We need to consider the terms on the right-hand side of
\eqref{E:3.29*} when $|\zeta|\leq 3$.
It follows from the Gagliardo-Nirenberg inequality and  H\"older's inequality that
\begin{equation}\label{guji1}
I_1=\int\text{div} A \big|\partial^{\zeta}_x U\big|^2\,\dd x
\leq  C\big|\text{div} A\big|_{\infty}\big|\partial^{\zeta}_x U\big|^2_2
\leq  Cd_1\big|\partial^{\zeta}_x U\big|^2_2 \qquad\,\, \text{for $|\zeta|\leq 3$}.
\end{equation}

Similarly, for $I_2$, using Lemma \ref{zhen1},
we have
\begin{equation}\label{guji2}
\begin{split}
I_2=&-2\sum_{l=1}^3\int\big(\partial^{\zeta}_x (A_l\partial_l U)-A_l \partial^{\zeta}_x \partial_l U\big)\cdot \partial^{\zeta}_x U\,{\rm d} x\\
  \leq&\,  C\big|\nabla V\big|_\infty\big|\nabla U\big|^2_2\leq Cd_1\big|\nabla U\big|^2_2 \,\,\qquad \text{for $|\zeta|=1$};\\[6pt]
I_2\leq &\, C\big(\big|\nabla V\big|_\infty\big|\nabla^2 U\big|_2
   +\big|\nabla^2V\big|_3\big|\nabla U\big|_6\big)\big|\nabla^2 U\big|_2\\
\leq &\, Cd_1\big\|\nabla U\big\|^2_1 \,\,\qquad \text{for $|\zeta|=2$};\\[6pt]
I_2\leq  &\, C\big(\big|\nabla V\big|_\infty\big|\nabla^3 U\big|_2+\big|\nabla^3 V\big|_2\big|\nabla U\big|_\infty\big)\big|\nabla^3 U\big|_2\\
\leq &\, Cd_1\big\|\nabla U\big\|^2_2 \,\,\qquad \text{for $|\zeta|=3$}.
\end{split}
\end{equation}

For $I_3$ and $I_5$, it follows  from Lemma \ref{3}, the Gagliardo-Nirenberg inequality, the H\"older inequality, and the Young inequality that
\begin{equation}\label{guji3}
\begin{split}
I_3+I_5
=&-2\epsilon a_1 \int \big(\alpha\nabla h^2\, \nabla \partial^{\zeta}_x u
        +(\alpha+\beta)\nabla h^2\text{div} \partial^{\zeta}_x u\big) \cdot\partial^{\zeta}_x u\,\dd x \\
\leq &\, C\epsilon a_1 \big|\psi\big|_\infty \big|h\nabla \partial^{\zeta}_x u\big|_2 \big|\partial^{\zeta}_x u\big|_2\\[1.5mm]
\leq &\, \frac{\epsilon a_1\alpha}{16}\big|h\nabla \partial^{\zeta}_x u\big|^2_2+C d^2_0\big|\partial^{\zeta}_x u\big|^2_2
\qquad\,\, \text{for $|\zeta|\leq 3$}.
\end{split}
\end{equation}

Similarly, for $I_4$ and $I_7$,  it follows from Lemma \ref{3} that
\begin{equation}\label{guji5}
\begin{split}
I_4=&\,2\epsilon a_1\alpha\int\Big(\partial^{\zeta}_x(h^2\triangle u)-h^2\partial^{\zeta}_x \triangle u\Big)\cdot \partial^{\zeta}_x u\,\dd x \\
 \leq &\,  C\epsilon a_1\alpha \big|\psi\big|_{\infty}\big|h\triangle u\big|_2|\nabla u|_2\\
 \leq  &\,  \frac{\epsilon a_1\alpha}{16}\big|h\nabla^2 u\big|^2_2+C d^2_0 \big|\nabla u\big|^2_2 \qquad\,\,  \text{for $|\zeta|=1$};\\[2mm]
I_4\leq &\, C\epsilon a_1 \alpha\big(\big|\psi\big|_{\infty}\big|h\nabla^3 u\big|_2
   +\big|\psi\big|^2_{\infty}\big|\triangle u\big|_2+\big|\nabla \psi\big|_3\big|h\nabla^2 u\big|_6\big)\big|\nabla^2 u\big|_2\\
 \leq &\,\frac{\epsilon a_1\alpha}{16}\big|h\nabla^3 u\big|^2_2+ C d^2_0\big\|\nabla^2 u\big\|^2_1 \qquad\,\, \text{for $|\zeta|=2$};\\[2mm]
I_4\leq &\, C\epsilon a_1 \alpha\big(\big|\nabla^2\psi\big|_{2}\big|h\nabla^2 u\big|_{\infty}+\big|\nabla \psi\big|_{3}\big|h\nabla^3 u\big|_6
  +\big|\psi\big|_{\infty}\big|\nabla \psi|_3\big|\nabla^2 u\big|_6\big)\big|\nabla^3 u\big|_2\\
&+C\epsilon  a_1\alpha\big(\big|\psi\big|^2_{\infty}\big|\nabla^3 u\big|_{2}
   +\big|\psi\big|_{\infty}\big|h\nabla^4 u\big|_2\big)\big|\nabla^3 u\big|_2\\
 \leq &\frac{\epsilon a_1\alpha}{16}\big(\big|h\nabla^3 u\big|^2_2+\big|h\nabla^4 u\big|^2_2\big)
    +Cd^2_0\big\|\nabla^2 u\big\|^2_1 \qquad\,\,  \text{for $|\zeta|=3$};\\
 I_7=&\,\epsilon a_2\int \big(\nabla h^2\, \partial^{\zeta}_x Q(u)\big)\cdot \partial^{\zeta}_x u\,\dd x
  \leq   C\epsilon a_1 \big|\psi\big|_\infty\big|h\nabla^{|\zeta|+1} u\big|_2\big|\partial^{\zeta}_x u\big|_2\\
  \leq &\, \frac{\epsilon a_1\alpha}{16}\big|h\nabla^{|\zeta|+1} u\big|^2_2
      +C d^2_0\big|\partial^{\zeta}_x u\big|^2_2 \qquad\,\,  \text{for $|\zeta|\leq 3$}.
\end{split}
\end{equation}
Because of the same structure, the above  estimates of $I_4$ also hold for the term:
$$
I_6=2\epsilon a_1(\alpha+\beta)\int\big(\partial^{\zeta}_x(h^2\nabla \text{div} u)
  -h^2\partial^{\zeta}_x\nabla \text{div} u\big)\cdot \partial^{\zeta}_x u\,\dd x.
$$

3. For $I_8$, it follows from Lemma \ref{3} that
\begin{equation}\label{guji7}
\begin{split}
I_8=&\, \epsilon a_2 \int \Big(\partial^{\zeta}_x\big(\nabla h^2\, Q(u)\big)-\nabla h^2\, \partial^{\zeta}_x Q(u)\Big)\cdot\partial^{\zeta}_x u\,\dd x\\
 \leq&\,  C\epsilon a_2 \big(\big|\psi\big|^2_\infty\big|\nabla u\big|^2_2
    +\big|\nabla \psi\big|_6\big|h\nabla u\big|_2\big|\nabla u\big|_3\big)\\
\leq  &\, \frac{\epsilon a_1\alpha}{16}\big|h\nabla u\big|^2_2+C d^2_0 \big\|\nabla u\big\|^2_1 \qquad\,\,\text{for $|\zeta|=1$};\\[2mm]
I_8\leq&\, C\epsilon a_2\Big( \big|\nabla^2 \psi\big|_2\big|h\nabla u\big|_{\infty}
    + \big|\psi\big|_\infty\big|\nabla \psi\big|_{2}\big|\nabla u\big|_\infty \Big)\big|\nabla^2 u\big|_2\\
&+ C\epsilon a_2\Big( \big|\nabla \psi\big|_{3}\big|h\nabla^2 u\big|_{6}
  + \big|\psi\big|^2_{\infty}\big|\nabla^2 u\big|_2\Big)\big|\nabla^2 u\big|_2\\
 \leq &\,\frac{\epsilon a_1\alpha}{16}\big(\big|h\nabla^2 u\big|^2_2+\big|h\nabla^3 u\big|^2_2\big)
   +Cd^2_0\big\|\nabla u\big\|^2_2 \qquad\,\,  \text{for $|\zeta|=2$}.
\end{split}
\end{equation}

On the other hand, when $|\zeta|=3$,
\[
\begin{split}
&\partial^{\zeta}_x\big(\nabla h^2 Q(u)\big)-\nabla h^2 \partial^{\zeta}_x Q(u)\\
&=
\sum_{i=1}^3 C_{i1}\partial^{\zeta^i}_x\nabla h^2 \partial^{\zeta-\zeta^i}_xQ(u)
+\sum_{i=1}^3
C_{i2}\partial^{\zeta-\zeta^i}_x\nabla h^2 \partial^{\zeta^i}_x
Q(u)+\partial^{\zeta}_x \nabla h^2 Q(u),
\end{split}
\]
where  $C_{ij}$, $i=1,2,3, j=1,2$, are all constants,  and $\zeta=\zeta^1+\zeta^2+\zeta^3$
with $\zeta^i\in \mathbb{R}^3$ as a
multi-index satisfying $|\zeta^i|=1$, $i=1, 2, 3$.
Then $I_8:=I_{81}+I_{82}+I_{83}$ can be estimated as follows:
\begin{equation}\label{DZ12cc}
\begin{split}
I_{81}=&\,\epsilon a_2\int\sum_{i=1}^3C_{i1}\big(\partial^{\zeta^i}_x\nabla h^2\partial^{\zeta-\zeta^i}_x Q(u)\big)\cdot \partial^{\zeta}_x u\,\dd x\\
\leq &\, C\epsilon a_1\big(\big|\psi\big|^2_\infty\big|\nabla^3 u\big|_2
           +\big|\nabla \psi\big|_3\big|h\nabla^3 u\big|_6\big)\big|\nabla^3 u\big|_2\\
\leq &\,\frac{\epsilon a_1\alpha}{16}\big|h\nabla^4 u\big|^2_2+Cd^2_0\big|\nabla^3 u\big|^2_2,\\[1mm]
I_{82}=&\,\epsilon a_2\int\sum_{i=1}^3C_{i2}\big(\partial^{\zeta-\zeta^i}_x\nabla h^2 \partial^{\zeta^i}_x Q(u)\big)\cdot \partial^{\zeta}_x u\,\dd x \\
\leq &\, C\epsilon a_1\big(\big|h\nabla^2u\big|_{\infty}\big|\nabla^2 \psi\big|_2
       +\big|\psi\big|_\infty\big|\nabla \psi\big|_{6}\big|\nabla^2 u\big|_3\big)\big|\nabla^3 u\big|_2\\
 \leq &\, \frac{\epsilon a_1\alpha}{16}\big(\big|h\nabla^3 u\big|^2_2+\big|h\nabla^4 u\big|^2_2\big)+Cd^2_0\big\|\nabla^2 u\big\|^2_1,\\[2mm]
I_{83}=&\,2\epsilon a_1 \int\big(\partial^{\zeta}_x \nabla h^2 Q(u)\big)\cdot\partial^{\zeta}_x u\,\dd x\\
\leq &\, C\epsilon a_1\big(\big|h\nabla^2u\big|_{\infty}\big|\nabla^3 u\big|_2
      +\big|h\nabla^4 u\big|_2\big|\nabla u\big|_\infty\big)\big|\nabla^2 \psi\big|_2\\
&+C\epsilon a_1\big(\big|\psi\big|_\infty\big|\nabla^2 \psi\big|_2
   +\big|\nabla \psi\big|_3\big|\nabla \psi\big|_6\big)\big|\nabla u\big|_\infty\big|\nabla^3 u\big|_2\\
 \leq &\,\frac{\epsilon a_1\alpha }{16}\big(\big|h\nabla^3 u\big|^2_2+\big|h\nabla^4 u\big|^2_2\big)+Cd^2_0\big\|\nabla^2 u\big\|^2_1,
\end{split}
\end{equation}
where we have performed integration by parts for $I_{83}$.

\smallskip
4. From \eqref{E:3.29*}--\eqref{DZ12cc},
along with the Gronwall inequality, we have
\begin{equation}\label{DZ34}
\big\|U(t)\big\|^2_{3}+\epsilon\int^t_0\sum_{i=1}^4 \big(\big|h\nabla^i u\big|^2_{2}+\nu^2\big|\nabla^i u\big|^2_{2}\big)\text{d} s
\leq
\big\|U_0\big\|^2_{3}\exp(C d^2_1t)\leq Cd^2_0 \quad\mbox{for $0\leq t\leq T_1$}.
\end{equation}
This completes the proof.
\end{proof}

\medskip
Then, from Lemmas \ref{3}--\ref{2}, for
$$
0 \leq t \leq T_1=\min (T^*, (1+d_1)^{-2}),
$$
we have
\begin{equation*}
\epsilon \big\|\psi(t)\big\|^2_{D^1\cap D^2}+\big\| U(t)\big\|^2_{3}
+  \epsilon \sum_{i=1}^4 \int_{0}^{t}\big(\big|h\nabla^i u\big|^2_{2}+\nu^2 \big|\nabla^i u\big|^2_{2}\big)\dd s\leq Cd^2_0.
\end{equation*}
Therefore, defining
\begin{equation}\label{determine}
T^*=\min (T, (1+d^{2}_1)^{-1}),\quad d_1=C^{\frac{1}{2}}d_0,
\end{equation}
we have
\begin{equation}\label{jkk}
\epsilon \big\|\psi(t)\big\|^2_{D^1\cap D^2}+\big\| U(t)\big\|^2_{3}
+\epsilon \sum_{i=1}^4\int_{0}^{t}  \big(\big|h\nabla^i u\big|^2_{2}+\nu^2\big|\nabla^i u\big|^2_{2}\big)\text{\rm d}s\leq d^2_1
\quad \mbox{for $0\leq t \leq T^*$}.
\end{equation}
In other words, given fixed $d_0$ and $T$,
there are positive constants $T^*$ and $d_1$, depending only on $d_0$ and $T$,
such that, if \eqref{jizhu1} holds for $(g,v)$,
then \eqref{jkk} holds for the strong solution $(h,\psi,c,u)$ of problem
\eqref{li4} in $[0, T^*]\times \mathbb{R}^3$.

\subsection{Construction of the nonlinear approximation solutions}
In this subsection, based on the assumption that $\nu>0$ and  $h_0\leq \eta^\iota$ for some $\eta>0$,
we now give the local-in-time well-posedness of the following nonlinear Cauchy problem:
 \begin{equation}
\label{middle}
\begin{cases}
\displaystyle
 \displaystyle h_t+u\cdot \nabla h+\frac{\delta-1}{2}h\text{div} u=0,\\[4pt]
\displaystyle
 A_0 U_t+\sum^{3}_{j=1}A_j(U)\partial_j U=-\epsilon F(\nu,h, u)+\epsilon G(h,\psi, u),\\[4pt]
  (h,  c,u)|_{t=0}=((A\gamma)^{-\frac{\iota}{2}}(c_0+\eta)^\iota,  c_0,u_0)(x)\qquad\mbox{for $x\in \mathbb{R}^3$},
\end{cases}
\end{equation}
where $\psi=\nabla h$ satisfies
\begin{equation}\label{psiequation}
\psi_t+\sum_{l=1}^3 B_l(u) \partial_l\psi
  +B(u)\psi + \frac{\delta-1}{2}h  \nabla \text{div} u=0.
  \end{equation}

\begin{theorem}\label{th1zx111}
Let \eqref{canshu} hold and $(\nu,\eta, \epsilon)\in (0,1]\times(0,1]\times (0,1]$.
If the initial data  $(c_{0},h_0,  u_{0})$ satisfy \eqref{zhenginitial}, then
there exist $T_*>0$ and a unique classical solution $(h,c, u)$
of problem \eqref{middle} in $[0,T_*]\times \mathbb{R}^3$
such that
\eqref{reggh} and the uniform a priori estimates \eqref{jkk}
with $T^*$ replaced by $T_*$ hold, where $T_*$ is independent of $(\nu,\eta, \epsilon)$.
\end{theorem}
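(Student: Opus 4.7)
The plan is to prove Theorem \ref{th1zx111} by a Picard-type iteration built on the linear template (\ref{li4}) studied in Lemma \ref{lem1}, using the uniform \emph{a priori} bound (\ref{jkk}) from Lemmas \ref{3}--\ref{2} to extract a uniform existence time. The key point is that the estimates of \S 4.2 have already been crafted so that the iteration closes in a ball of radius $d_1 = C^{1/2} d_0$ on the time interval $[0,T^*]$ uniformly in $(\nu,\eta,\epsilon)$, so no further analysis of the degenerate coefficients is needed to extract the uniform existence time.

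First I would construct the iterates. Take $(V^0, g^0)(t,x) := (c_0, u_0, h_0)(x)$, extended trivially in $t$ and mollified if necessary to satisfy the regularity (\ref{vg}). Given $(V^k, g^k) = ((\varphi^k, v^k), g^k)$ satisfying (\ref{vg}) together with the bound (\ref{jizhu1}), invoke Lemma \ref{lem1} to solve the linearized problem (\ref{li4}) with coefficients $(V^k, g^k)$, obtain a unique classical solution $(h^{k+1}, c^{k+1}, u^{k+1})$, and set $(V^{k+1}, g^{k+1}) := ((c^{k+1}, u^{k+1}), h^{k+1})$. By Lemmas \ref{3}--\ref{2} and the choice (\ref{determine}) of $(T^*, d_1)$, the new iterate again satisfies (\ref{jizhu1}) on the same interval $[0, T^*]$ with the same constant $d_1$, which closes the induction and yields uniform-in-$k$ bounds for the whole sequence in the topology of (\ref{jkk}).

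Next I would prove contraction in a lower-order norm. Set $\delta h^{k+1} := h^{k+1} - h^k$, and similarly $\delta c^{k+1}$, $\delta u^{k+1}$. Subtracting consecutive linear problems, the differences satisfy linear equations whose right-hand sides are driven by $(\delta V^k, \delta g^k)$ with coefficients bounded uniformly in $H^3$ by the previous step. A standard $L^2$ energy argument, which uses the coercive artificial viscosity $\epsilon\nu^2 Lu^{k+1}$ to absorb the $h^2 Lu$ term (whose coefficient would otherwise degenerate at infinity), together with the uniform $H^3$ bound on the iterates, yields an inequality of the form
\begin{equation*}
\frac{\dd}{\dd t}\Phi^{k+1}(t) \leq C_{\nu,\eta,\epsilon}\bigl(\Phi^{k+1}(t) + \Phi^k(t)\bigr),\qquad \Phi^{k+1}(t):= \bigl|\delta h^{k+1}\bigr|_2^2 + \bigl|\delta c^{k+1}\bigr|_2^2 + \bigl|\delta u^{k+1}\bigr|_2^2,
\end{equation*}
with a constant $C_{\nu,\eta,\epsilon}$ that depends on the regularization parameters but not on $k$. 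Gronwall's inequality and a further time restriction to some $T_{**} \in (0, T^*]$ (possibly depending on $(\nu,\eta,\epsilon)$) make the iteration a strict contraction in $C([0, T_{**}]; L^2)$.

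Finally I would pass to the limit and clean up. Combining the $L^2$ convergence with the uniform $H^3$ bound via interpolation gives strong convergence of $(h^k, c^k, u^k)$ in $C([0, T_{**}]; H^{s'})$ for every $s' \in [0,3)$, which suffices to pass to the limit in each nonlinear term of (\ref{li4}) and recover the nonlinear system (\ref{middle}) almost everywhere. Weak-$\ast$ closure transfers the regularity (\ref{reggh}) and the uniform estimate (\ref{jkk}) to the limit. Uniqueness follows from the same $L^2$ difference argument applied to two solutions with identical initial data. A standard continuation argument then extends the solution from $[0, T_{**}]$ to the full $[0, T^*]$, since the uniform bound guarantees the local existence time resets with the same constants. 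The main obstacle in this proof is the contraction step: the coefficient $h^2$ in front of $Lu$ degenerates as $|x|\to \infty$, which is precisely what the artificial viscosity $\nu^2 Lu$ was introduced to circumvent at this stage, while the uniformity in $(\nu, \eta, \epsilon)$ of the final existence time $T_*$ comes entirely from the higher-order estimates of \S 4.2 and not from the contraction.
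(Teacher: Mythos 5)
Your overall architecture (iterate on the linear template \eqref{li4}, keep all iterates in the ball of radius $d_1$ on a uniform interval via \S 4.2, contract in a low-order norm, pass to the limit) is the same as the paper's, but the contraction step as you state it fails. The functional $\Phi^{k+1}=|\delta h^{k+1}|_2^2+|\delta c^{k+1}|_2^2+|\delta u^{k+1}|_2^2$ is too weak, and the inequality $\frac{\dd}{\dd t}\Phi^{k+1}\le C_{\nu,\eta,\epsilon}(\Phi^{k+1}+\Phi^{k})$ cannot hold: the difference system \eqref{eq:1.2w} contains $\frac{\delta-1}{2}h^{k}\,\mathrm{div}\,\overline{u}^{k}$ in the $\overline{h}^{k+1}$-equation and $\frac{\delta-1}{2}h^{k}\nabla\,\mathrm{div}\,\overline{u}^{k}$ in the $\overline{\psi}^{k+1}$-equation, i.e.\ first and second derivatives of the \emph{previous} velocity difference, which no constant (however dependent on $\nu,\eta,\epsilon$) can bound by $|\overline{u}^{k}|_2$; interpolating against the uniform $H^3$ bound only yields a sublinear power of $\Phi^k$, which does not produce a convergent iteration. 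In addition, the source $G(h^{k+1},\overline{\psi}^{k+1},u^{k+1})$ in the velocity difference equation forces $|\overline{\psi}^{k+1}|_2=|\nabla\overline{h}^{k+1}|_2$ into the contraction norm, and the term $\overline{h}^{k+1}(h^{k+1}+h^{k})Lu^{k}$ requires $|\overline{h}^{k+1}|_6\lesssim|\overline{\psi}^{k+1}|_2$ and $|\overline{u}^{k+1}|_3\lesssim\|\overline{u}^{k+1}\|_1$. This is why the paper contracts $\Gamma^{k+1}=\sup\big(\epsilon|\overline{h}^{k+1}|_2^2+\epsilon|\overline{\psi}^{k+1}|_2^2+\|\overline{U}^{k+1}\|_1^2\big)$ \emph{together with} the dissipation $\epsilon\int(|h^{k+1}\nabla\overline{u}^{k+1}|_2^2+|h^{k+1}\nabla^2\overline{u}^{k+1}|_2^2)$: the terms $h^{k}\nabla^{j}\overline{u}^{k}$, $j=1,2$, are absorbed into the step-$k$ dissipation with a small prefactor $\sigma$ (see \eqref{difference1}--\eqref{difference2} and \eqref{gogo13}), and the recursion then sums geometrically on a time interval determined only by $\sigma$ and $d_1$. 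As a byproduct the contraction time is already independent of $(\nu,\eta,\epsilon)$, so no continuation step is needed; your continuation argument is also stated too loosely, since the data at the restart time has size $d_1>d_0$ and the constants do not literally ``reset.''

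Two further points you pass over. First, $|\overline{h}^{k+1}|_2$ is not obviously finite: $h^{k+1}$ and $h^{k}$ each tend to $(A\gamma)^{-\iota/2}\eta^{\iota}\neq 0$ at infinity and are not individually in $L^2$; the paper devotes Lemma \ref{cancel} to showing $\overline{h}^{k+1}\in L^\infty([0,T^{**}];H^3)$ before any $L^2$ pairing is performed. Second, the difference of the elliptic terms, $F(\nu,h^{k+1},u^{k})-F(\nu,h^{k},u^{k})=a_1\overline{h}^{k+1}(h^{k+1}+h^{k})Lu^{k}$, is not handled by the $\nu^2$-coercivity as you suggest, but by the algebraic splitting \eqref{minusplus} combined with the uniform bounds on $|Lu^{k}|_2$ and $|h^{k}Lu^{k}|_6$; the artificial viscosity serves to make the linear problems uniformly parabolic and globally solvable, not to absorb this commutator.
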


The proof is based on an iteration scheme and the conclusions obtained in \S 4.1--\S 4.2.
As in \S 4.2, we define the same constants $d_i$, $i=0,1$.

Let $(h^0,c^0, u^0)$ be the solution of the Cauchy problem in $(0,\infty)\times \mathbb{R}^3$:
\begin{equation}\label{zheng6}
\left\{\begin{aligned}
\displaystyle
 &X_t+u_0 \cdot \nabla X=0,\\[4pt]
\displaystyle
&Y_t+u_0 \cdot \nabla Y=0, \\[4pt]
\displaystyle
&Z_t- \epsilon (X^2+\nu^2)\triangle Z=0,\\[4pt]
\displaystyle
&(X,Y,Z)|_{t=0}
=((A\gamma)^{-\frac{\iota}{2}}(c_0+\eta)^\iota,c_0,u_0)(x) \qquad \text{for $x\in \mathbb{R}^3$}.
\end{aligned}
\right.
\end{equation}
Choose $T^{**}\in (0,T^*]$ small enough such that, for $0\leq t \leq T^{**}$,
\begin{equation}\label{jizhu}
\epsilon \big\|\nabla h^0(t)\big\|^2_{D^1\cap D^2}
+\big\| (c^0,u^0)(t)\big\|^2_{3}
+\epsilon \sum_{i=1}^4  \int_{0}^{t}\big(\big|h^0\nabla^i u^0\big|^2_{2}+\nu^2\big|\nabla^i u^0\big|^2_{2}\big)\text{d}s\leq d^2_1.
\end{equation}

\begin{proof} The desired existence, uniqueness, and time-continuity can be proved in the following three steps.

\smallskip
\paragraph{{\rm 1}. \em  Existence.}
Let the first step of our iteration be
$(g,\varphi,v)=(h^0,c^0,u^0)$.  Then we can obtain a classical solution $(h^1,c^1,u^1)$ of problem (\ref{li4})
so that $\psi^1:=\nabla h^1$ is defined.
Inductively, we construct approximate sequence $(h^{k+1}, c^{k+1},u^{k+1})$ as follows:
Given $(h^k, c^k,  u^{k})$ for $k\geq 1$, define $( h^{k+1},c^{k+1},u^{k+1})$  by solving the following problem:
\begin{equation}\label{li6}
\begin{cases}
\displaystyle
h^{k+1}_t+u^{k}\cdot \nabla h^{k+1}+\frac{\delta-1}{2}h^k\text{div} u^k=0,\\[4pt]
\displaystyle
A_0 U^{k+1}_t+\sum^3_{l=1} A_l(U^k)\partial_l U^{k+1}
=-\epsilon F(\nu,h^{k+1}, u^{k+1})+ \epsilon G(h^{k+1},\psi^{k+1}, u^{k+1}),\\[4pt]
\displaystyle
( h^{k+1}, c^{k+1}, u^{k+1})|_{t=0}
=((A\gamma)^{-\frac{\iota}{2}}(c_0+\eta)^\iota, c_0,u_0)(x) \,\,\qquad \mbox{for $x\in \mathbb{R}^3$},
 \end{cases}
\end{equation}
where $\psi^{k+1}=\nabla h^{k+1}$ satisfy the following equations ({\it cf.} \eqref{kuzxclinear}):
\begin{equation}\label{4.36a}
\psi^{k+1}_t+\sum^3_{l=1} B_l(u^k) \partial_l\psi^{k+1}+B^*(u^k)\psi^{k+1}
+\frac{\delta-1}{2} \nabla\big(h^k \text{div} u^k\big)=0.
\end{equation}
Then the solution sequences  $(h^{k}, c^{k}, u^{k})$, $k=1,2,...,$  satisfy the uniform estimates (\ref{jkk}).

Next, we prove that the whole sequence $(h^k,\psi^k, c^k,
u^k)$ converges strongly to a limit $(h,\psi,c,u)=(h,\nabla h, c, u)$ in some strong sense.
Denote
\begin{equation*}\begin{split}
&\overline{h}^{k+1}:=h^{k+1}-h^k,\quad \overline{\psi}^{k+1}:=\psi^{k+1}-\psi^k=\nabla  \overline{h}^{k+1},\\
&\overline{c}^{k+1}:=c^{k+1}-c^k,\quad  \overline{u}^{k+1}:=u^{k+1}-u^k, \quad  \overline{U}^{k+1}=
(\overline{c}^{k+1}, \overline{u}^{k+1}).
\end{split}
\end{equation*}
Then, from (\ref{li6}), we have
\begin{equation}\label{eq:1.2w}
\left\{\begin{aligned}
\displaystyle
& \overline{h}^{k+1}_t+u^k\cdot \nabla\overline{h}^{k+1} +\overline{u}^k\cdot\psi^{k}
  +\frac{\delta-1}{2}\big(\overline{h}^{k} \text{div}u^{k-1} +h^{k}\text{div}\overline{u}^k\big)=0,\\[1pt]
\displaystyle
&\overline{\psi}^{k+1}_t+\sum^3_{l=1} B_l(u^k) \partial_l\overline{\psi}^{k+1}+B^*(u^k)\overline{\psi}^{k+1}\\[1pt]
&\quad =-\frac{\delta-1}{2}\big(\overline{h}^{k} \nabla\text{div}u^{k-1} +h^{k}\nabla \text{div}\overline{u}^k\big)\\[1pt]
&\quad\quad -\sum^3_{l=1} B_l(\overline{u}^k) \partial_l \psi^{k}-B^*(\overline{u}^k)\psi^k
  -\frac{\delta-1}{2}\big(\overline{\psi}^{k} \text{div}u^{k-1} +\psi^{k}\text{div}\overline{u}^k\big),\\[2pt]
\displaystyle
&A_0\overline{U}^{k+1}_t+\sum_{l=1}^3A_l(U^k)\partial_l\overline{U}^{k+1}+
\epsilon F(\nu, h^{k+1}, \overline{u}^{k+1}) \\[1pt]
&\quad=-\sum_{l=1}^3 A_l(\overline{U}^k)\partial_l U^k-\epsilon\big(F(\nu, h^{k+1}, u^k)-F(\nu, h^{k}, u^k)\big)\\[1pt]
&\quad\quad +
\epsilon\Big( G(h^{k+1},\overline{\psi}^{k+1}, u^{k+1})+G(h^{k+1},\psi^{k}, \overline{u}^{k+1})+G(\overline{h}^{k+1}, \psi^{k}, u^{k})\Big).
\end{aligned}
\right.
\end{equation}

First, for $\overline{h}^{k+1}$, we state the following lemma:
\begin{lemma}\label{cancel} We have
$$
\overline{h}^{k+1} \in L^\infty([0,T^{**}];H^3(\mathbb{R}^3)) \qquad\,\, \text{for $k=1,2,...$}.
$$
\end{lemma}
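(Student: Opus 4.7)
The plan is to deduce the $H^3$-regularity of $\overline{h}^{k+1}$ by separating its higher-order derivatives, which come essentially for free from the existence theory provided by Lemma \ref{lem1}, from its $L^2$-norm, which must be recovered from a direct energy estimate on the transport equation that $\overline{h}^{k+1}$ satisfies. More precisely, applying Lemma \ref{lem1} at iteration levels $k$ and $k+1$ places both $\nabla h^{k+1}$ and $\nabla h^k$ in $C([0,T^{**}]; H^2(\mathbb{R}^3))$, and taking the difference yields $\nabla \overline{h}^{k+1} = \overline{\psi}^{k+1} \in L^\infty([0,T^{**}]; H^2(\mathbb{R}^3))$. Therefore the problem reduces to showing that $\overline{h}^{k+1} \in L^\infty([0,T^{**}]; L^2(\mathbb{R}^3))$.

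The first equation in \eqref{eq:1.2w} is a linear transport equation for $\overline{h}^{k+1}$ driven by $u^k$ with zero initial data,
\[
\overline{h}^{k+1}_t + u^k \cdot \nabla \overline{h}^{k+1} = S^k, \qquad \overline{h}^{k+1}|_{t=0} = 0,
\]
where $S^k := -\overline{u}^k \cdot \psi^k - \frac{\delta-1}{2}\big(\overline{h}^k \, \text{div}\, u^{k-1} + h^k \, \text{div}\, \overline{u}^k\big)$. Testing against $\overline{h}^{k+1}$, integrating by parts in the convective term, and invoking Gronwall's inequality give
\[
|\overline{h}^{k+1}(t)|_2 \leq \exp\Big(\tfrac{1}{2}\int_0^t |\text{div}\,u^k|_\infty\, {\rm d}s\Big) \int_0^t |S^k(s)|_2\, {\rm d}s,
\]
in which the prefactor is finite for each fixed $k$ because $u^k \in C([0,T^{**}]; H^3) \hookrightarrow C([0,T^{**}]; W^{1,\infty}(\mathbb{R}^3))$ by \eqref{reggh}. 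It then remains to verify inductively on $k$ that $S^k \in L^\infty([0,T^{**}]; L^2(\mathbb{R}^3))$.

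The source is controlled by the product estimates $|\overline{u}^k \cdot \psi^k|_2 \leq |\overline{u}^k|_\infty\,|\psi^k|_2$, which is finite since $\overline{u}^k \in L^\infty([0,T^{**}]; H^3) \hookrightarrow L^\infty([0,T^{**}]; L^\infty)$ and $\psi^k \in L^\infty([0,T^{**}]; L^2)$ by Lemma \ref{lem1}; $|h^k\, \text{div}\,\overline{u}^k|_2 \leq |h^k|_\infty\,|\text{div}\,\overline{u}^k|_2$ is bounded similarly; and $|\overline{h}^k\,\text{div}\,u^{k-1}|_2 \leq |\overline{h}^k|_2\,|\text{div}\,u^{k-1}|_\infty$ is handled via the inductive hypothesis on $\overline{h}^k$. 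The base case $\overline{h}^1 \in L^\infty([0,T^{**}]; L^2(\mathbb{R}^3))$ is obtained by subtracting the transport equations for $h^1$ and $h^0$ extracted from \eqref{li6} and \eqref{zheng6}, whose source $(u^0-u_0)\cdot\nabla h^0 + \frac{\delta-1}{2} h^0\,\text{div}\,u^0$ is manifestly in $L^2$ from \eqref{reggh} and the heat-type regularity of $u^0$. The main pitfall to avoid is attempting to estimate $\overline{h}^{k+1}$ directly in $H^3$ via the source: the product $\overline{u}^k \cdot \psi^k$ is only in $H^2$, because its third derivative requires $\nabla^4 h^k$, which is beyond the regularity guaranteed by \eqref{reggh}. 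The merit of the present strategy is precisely that the $H^2$-regularity of $\nabla\overline{h}^{k+1}$ is inherited for free from the existence theorem, so the transport equation needs to carry only the much weaker $L^2$-estimate.
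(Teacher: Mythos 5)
Your overall architecture coincides with the paper's: the derivatives of $\overline{h}^{k+1}$ are inherited from $\psi^{k+1},\psi^{k}\in L^\infty([0,T^{**}];H^2)$, and everything reduces to an $L^2$ bound on $\overline{h}^{k+1}$ itself, extracted from the transport equation $(\ref{eq:1.2w})_1$ with zero initial data. Your source estimates are correct, and the induction you set up (propagating $\overline{h}^k\in L^\infty L^2$ in order to control $\overline{h}^k\,\text{div}\,u^{k-1}$) would close; it is, however, unnecessary, since $|\overline{h}^k\,\text{div}\,u^{k-1}|_2\leq |\overline{h}^k|_\infty|\text{div}\,u^{k-1}|_2$ with $|\overline{h}^k|_\infty\leq |h^k|_\infty+|h^{k-1}|_\infty$ already controlled by \eqref{reggh}; this is the bound the paper uses, and it decouples the iteration levels.

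The genuine gap is in the step ``testing against $\overline{h}^{k+1}$, integrating by parts \dots and invoking Gronwall.'' Writing the energy identity over all of $\mathbb{R}^3$ presupposes that $\overline{h}^{k+1}(t)\in L^2(\mathbb{R}^3)$ --- but that is precisely the conclusion of the lemma, and here it is not a formality: $h^{k+1}$ and $h^{k}$ each tend to the nonzero constant $(A\gamma)^{-\frac{\iota}{2}}\eta^{\iota}$ as $|x|\to\infty$, so neither belongs to $L^2$ individually, and the integrability of their difference is exactly what is at stake (this is why the statement is isolated as a lemma at all). The paper's proof in Remark \ref{cancelproof} repairs exactly this point: it multiplies the equation by the cutoff $\chi_R(x)=\chi(x/R)$, derives a Gronwall inequality for $|\overline{h}^{k+1}\chi_R|_2$ whose constants are independent of $R$ --- the commutator term $u^k\overline{h}^{k+1}\cdot\nabla\chi_R$ being controlled by $|u^k|_2|\overline{h}^{k+1}|_\infty$ --- and then lets $R\to\infty$. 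Alternatively, one can integrate along the characteristics of $u^k$ and use the boundedness of the Jacobian $\exp\big(\int_0^t\text{div}\,u^k\,{\rm d}s\big)$. Some such device is required; without it, the energy identity you start from is circular.
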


\begin{remark}
This lemma is important for our limit process from the linear problem to the nonlinear one,
and helps us to deal with the cancellation of the most singular terms in the limit process.
Its proof can be found in Remark {\rm\ref{cancelproof}} at the end of this subsection.
\end{remark}

Based on Lemma \ref{cancel},  multiplying
$(\ref{eq:1.2w})_1$ by $2\overline{h}^{k+1}$ and then integrating over
$\mathbb{R}^3$, we have
\begin{equation}\label{difference1}
\begin{split}
 \epsilon\frac{\rm d}{{\rm d} t}\big|\overline{h}^{k+1}\big|^2_2
\leq&\, C\epsilon \big(\big|\nabla u^k\big|_\infty \big|\overline{h}^{k+1}\big|_2
  +\big|\psi^k\big|_6\big|\overline{u}^k\big|_3\big)\big|\overline{h}^{k+1}\big|_2\\
    &\,+C\epsilon\big(\big|\overline{h}^k\big|_2\big|\nabla u^{k-1}\big|_\infty
      +\big|h^k \nabla \overline{u}^k\big|_2\big)\big|\overline{h}^{k+1}\big|_2\\
\leq &\, C\sigma^{-1}\epsilon \big|\overline{h}^{k+1}\big|^2_2
     +\sigma\big(\big\|\overline{u}^k\big\|^2_1+\epsilon \big|\overline{h}^k\big|^2_2
      +\epsilon \big|h^k \nabla \overline{u}^k\big|^2_2\big),
\end{split}
\end{equation}
where $\sigma\in(0, \frac{1}{10})$ is a constant to be determined later.

Then multiplying
$(\ref{eq:1.2w})_2$ by $2\overline{\psi}^{k+1}$ and integrating over
$\mathbb{R}^3$ yield
\begin{equation}\label{difference2}
\begin{split}
\epsilon\frac{\rm d}{{\rm d} t} \big|\overline{\psi}^{k+1}\big|^2_2
\leq &\, C\epsilon\big|\nabla u^k\big|_\infty\big|\overline{\psi}^{k+1}\big|^2_2
+C\epsilon\big(\big|\nabla^2 u^{k-1}\big|_3\big|\overline{h}^k\big|_6
   +\big|h^k\nabla^2 \overline{u}^{k}\big|_2\big)\big|\overline{\psi}^{k+1}\big|_2\\
&+C\epsilon\big(\big|\overline{u}^k\big|_6\big|\nabla \psi^k\big|_3
  +\big|\nabla \overline{u}^k\big|_2\big|\psi^k\big|_\infty
   +\big|\overline{\psi}^k\big|_2\big|\nabla u^{k-1}\big|_\infty\big)\big|\overline{\psi}^{k+1}\big|_2\\
\leq &\, C\sigma^{-1}\epsilon\big|\overline{\psi}^{k+1}\big|^2_2
  +\sigma\big(\big\|\overline{u}^k\big\|^2_1+\epsilon\big|\overline{\psi}^k\big|^2_2
  +\epsilon\big|h^k \nabla^2 \overline{u}^k\big|^2_2\big).
\end{split}
\end{equation}

Next,  multiplying
$(\ref{eq:1.2w})_3$ by $2\overline{U}^{k+1}$ and integrating over
$\mathbb{R}^3$, we have
\begin{equation}\label{suanshi1}
\begin{split}
&\frac{\rm d}{{\rm d}t}\int\overline{U}^{k+1}\cdot
 (A_0\overline{U}^{k+1})\,\dd x  +2\epsilon
a_1\int \alpha \big(\big|h^{k+1}\nabla \overline{u}^{k+1}\big|^2
     +\nu^2\big|\nabla \overline{u}^{k+1}\big|^2\big)\dd x\\
&+2\epsilon
a_1\int (\alpha+\beta)\big(\big|h^{k+1}\text{div} \overline{u}^{k+1}\big|^2
     +\nu^2\big|\text{div} \overline{u}^{k+1}\big|^2\big)\dd x\\
=&\,  \int \text{div} A(U^k)|\overline{U}^{k+1}|^2_2\,\dd x
  -2\sum^3_{l=1}\int  \big(A_l(\overline{U}^k)\partial_l U^k\big)\cdot \overline{U}^{k+1}\dd x\\
 &\,-2\epsilon a_1\int \Big(\alpha \big(\nabla (h^{k+1})^2\cdot \nabla \overline{u}^{k+1}\big)
      +(\alpha+\beta)\big(\nabla (h^{k+1})^2\text{div} \overline{u}^{k+1}\big)\Big)\cdot \overline{u}^{k+1}\dd x\\
 &\,+ 2\epsilon a_1 \int\Big(-\overline{h}^{k+1}(h^{k+1}+h^{k})Lu^k
   +\frac{2\delta}{\delta-1}h^{k+1}\overline{\psi}^{k+1}\, Q(u^{k+1})\Big)\cdot \overline{u}^{k+1}\dd x\\
 &\,+\frac{4\epsilon a_1\delta}{\delta-1}\int  \Big(h^{k+1}\psi^{k}\, Q(\overline{u}^{k+1})
   +\overline{h}^{k+1}\psi^{k}\, Q(u^{k})\Big) \cdot \overline{u}^{k+1}\dd x\\
\leq&\,
C\big|\nabla U^k\big|_{\infty}\big|\overline{U}^{k+1}\big|_2\big(\big|\overline{U}^{k+1}\big|_2
   + \big|\overline{U}^k\big|_2\big)
  + C\epsilon \big|\psi^{k+1}\big|_{\infty}\big|h^{k+1}\nabla \overline{u}^{k+1}\big|_2\big|\overline{u}^{k+1}\big|_2\\
&\,+ C\epsilon \big(\big|\overline{h}^{k+1}\big|^2_6\big|Lu^k\big|_2
  +\big|\overline{h}^{k+1}\big|_2\big|h^k Lu^k\big|_{6}
    +\big|\overline{\psi}^{k+1}\big|_2\big|h^{k+1}\nabla u^{k+1}\big|_6\big)\big|\overline{u}^{k+1}\big|_3\\
&\,+ C\epsilon \big(\big|h^{k+1}\nabla \overline{u}^{k+1}\big|_2\big|\psi^k\big|_\infty
    +\big|\psi^k\big|_\infty \big|\nabla u^{k}\big|_\infty\big|\overline{h}^{k+1}\big|_{2}\big)\big|\overline{u}^{k+1}\big|_2,
\end{split}
\end{equation}
where we have used the following fact:
\begin{equation}\label{minusplus}
\begin{split}
\int \overline{h}^{k+1}h^{k+1}Lu^k\cdot \overline{u}^{k+1}\dd x
=&\,\int \overline{h}^{k+1}\big(\overline{h}^{k+1}+h^k\big)Lu^k\cdot \overline{u}^{k+1}\dd x\\
\leq &\, C\big(\big|\overline{h}^{k+1}\big|^2_6\big|Lu^k\big|_2
  +\big|\overline{h}^{k+1}\big|_2\big|h^kLu^k\big|_6\big)\big|\overline{u}^{k+1}\big|_3.
\end{split}
\end{equation}

Now, applying operator $\partial_{x}^{\zeta}$ ($|\zeta|=1$) to $(\ref{eq:1.2w})_3$, multiplying by
$2\partial_{x}^\zeta\overline{U}^{k+1}$, and integrating over
$\mathbb{R}^3$ yield
\begin{equation}\label{DZ11xyz-1}
\begin{split}
&\frac{\rm d}{{\rm d}t}\int \partial_{x}^\zeta\overline{U}^{k+1}\cdot
(A_0\partial_{x}^\zeta\overline{U}^{k+1})\,\dd x
 +2\epsilon a_1\int \alpha \big(\big|h^{k+1}\nabla\partial^\zeta_x\overline{u}^{k+1}\big|^2
    +\nu^2\big|\nabla\partial^\zeta_x\overline{u}^{k+1}\big|^2\big)\dd x\\
&\quad +2\epsilon
a_1\int (\alpha+\beta)\big(\big|h^{k+1}\text{div}\partial^{\zeta}_x\overline{u}^{k+1}\big|^2
  +\nu^2\big|\text{div}\partial^{\zeta}_x\overline{u}^{k+1}\big|^2\Big)\dd x\\
&=\int \text{div} A(U_k)\big|\partial^\zeta_x \overline{U}^{k+1}\big|^2\dd x
  -2\sum^3_{l=1}\int\partial_{x}^\zeta\big(A_l(\overline{U}^k)\partial_l U^k\big)
   \cdot\partial^{\zeta}_x \overline{U}^{k+1}\dd x \\
&\quad +
2\sum^3_{l=1}\int\Big(A_l(U^k)\partial_l\partial^{\zeta}_x\overline{U}^{k+1}-\partial_{x}^\zeta\big( A_l(U^k)\partial_l\overline{U}^{k+1}\big)
 \Big)\cdot\partial^\zeta_x\overline{U}^{k+1}\,\dd x\\
&\quad +2\epsilon a_1\int \partial^\zeta_x\Big(-\overline{h}^{k+1}(h^{k+1}+h^{k})Lu^k
    +G(h^{k+1},\overline{\psi}^{k+1}, u^{k+1})\Big) \cdot \partial^{\zeta}_x \overline{u}^{k+1}\,\dd x \\
&\quad +2\epsilon a_1\int \partial^\zeta_x\Big( G(h^{k+1},\psi^{k}, \overline{u}^{k+1})
   +G(\overline{h}^{k+1}, \psi^{k}, u^{k})\Big) \cdot \partial^\zeta_x\overline{u}^{k+1}\,\dd x\\
& :=J^*.
\end{split}
\end{equation}
It follows from integration by parts and   H\"older's inequality  that
 \begin{equation}\label{DZ11xyz}
\begin{split}
 J^* \leq&\, C\big(\big|\nabla U^k\big|_{\infty}\big(\big|\nabla \overline{U}^{k+1}\big|_2+\big|\nabla\overline{U}^k\big|_2\big)
   +\big|\overline{U}^k\big|_6\big|\nabla^2 U^k\big|_{3}\big)\big|\nabla \overline{U}^{k+1}\big|_2 \\[1mm]
&+C\epsilon\big(\big|\overline{h}^{k+1}\big|_6\big|h^{k+1}\nabla^2 \overline{u}^{k+1}\big|_{2}\big|Lu^k\big|_3
+ \big|h^k Lu^k\big|_\infty\big|\overline{\psi}^{k+1}\big|_2\big|\nabla \overline{u}^{k+1}\big|_2\big)\\[1mm]
&+C\epsilon\big(\big|\overline{h}^{k+1}\big|_3\big|h^{k}\nabla^3 u^{k}\big|_{6}\big|\nabla \overline{u}^{k+1}\big|_2
+\big|\overline{h}^{k+1}\big|_6 \big|\psi^k\big|_\infty \big|\nabla^2 u^k\big|_3\big|\nabla \overline{u}^{k+1}\big|_2\big)\\[1mm]
&+ C\epsilon \big(\big|\overline{\psi}^{k+1}\big|_2  \big|\nabla u^{k+1}\big|_\infty
+\big|\psi^{k}\big|_\infty  \big|\nabla \overline{ u}^{k+1}\big|_2\big)\big|h^{k+1}\nabla^2 \overline{u}^{k+1}\big|_2\\[1mm]
&+C\epsilon \big(\big|\nabla \psi^{k}\big|_3\big|\overline{h}^{k+1}\big|_{6}
 +\big|\psi^{k}\big|_\infty\big|\overline{\psi}^{k+1}\big|_{2}\big)\big|\nabla u^k\big|_\infty\big|\nabla \overline{u}^{k+1}\big|_2.
\end{split}
\end{equation}

Combining (\ref{suanshi1})--(\ref{DZ11xyz}) with the Young inequality, we have
\begin{equation}\label{gogo13}\begin{split}
&\frac{\rm d}{{\rm d} t}\big\|\overline{U}^{k+1}\big\|^2_1
 +\epsilon a_1 \big(\nu^2 \big\|\nabla \overline{u}^{k+1}\big\|^2_{1}
  +\big|h^{k+1}\nabla \overline{u}^{k+1}\big|^2_{2}+\big|h^{k+1}\nabla^2 \overline{u}^{k+1}\big|^2_{2}\big)\\[1mm]
&\leq \Theta^k_{\sigma,\epsilon}(t)
  \big(\big\|\overline{U}^{k+1}\big\|^2_1+\epsilon \big|\overline{\psi}^{k+1}\big|^2_{2}+\epsilon\big|\overline{h}^{k+1}\big|^2_{2}\big)
  +\sigma\big\|\overline{U}^{k}\big\|^2_1,
\end{split}
\end{equation}
where $\Theta^k_{\sigma,\epsilon}(t)$ satisfies
$$
\int_0^t \Theta^k_{\sigma,\epsilon}(s)\,\text{d}s\leq C+C_\sigma t \qquad\,\, \text{for $t\in (0,T^{**}]$}.
$$

Finally, let
\begin{equation*}
\Gamma^{k+1}(t)=\sup_{0\leq s \leq t}\big(\epsilon\big|\overline{h}^{k+1}(s)\big|^2_{2}+\epsilon\big|\overline{\psi}^{k+1}(s)\big|^2_{2}
  +\big\|\overline{U}^{k+1}(s)\big\|^2_{1}\big).
\end{equation*}
Combining (\ref{difference1})--(\ref{difference2}) and  (\ref{gogo13}) with the Gronwall inequality leads to
\begin{equation*}\begin{split}
&\Gamma^{k+1}(t)
 + \epsilon a_1  \int_{0}^{t}\big(\nu^2 \big\|\nabla \overline{u}^{k+1}\big\|^2_{1}
   +\big|h^{k+1}\nabla \overline{u}^{k+1}\big|^2_{2}+\big|h^{k+1}\nabla^2 \overline{u}^{k+1}\big|^2_{2}\big)\text{d}s\\
&\leq  C\sigma \Big( \epsilon \int_{0}^{t} \big(\big|h^{k}\nabla \overline{u}^{k}\big|^2_{2}
            +\big|h^{k}\nabla^2 \overline{u}^{k}\big|^2_{2}\big)\text{d}s
  +t\sup_{0\leq s \leq t}\Gamma^{k}(t)\Big)\exp{(C+C_\sigma t)}.
\end{split}
\end{equation*}

Choose $\sigma>0$ and $T_* \in (0,T^{**})$ small enough
such that
$$
C\sigma \exp{C}\leq \min(\frac{1}{8}, \frac{a_1 }{8}), \qquad
\exp(C_\sigma T_*) \leq 2.
$$
It is clear that $T_*>0$ is independent of $(\nu, \eta, \epsilon)$.
Then we have
\begin{equation*}\begin{split}
\sum_{k=1}^{\infty}\Big(  \Gamma^{k+1}(T_*)
+\epsilon a_1\int_{0}^{T_*}
\big(\nu^2 \big\|\nabla \overline{u}^{k+1}\big\|^2_{1}
 +\big|h^{k+1}\nabla \overline{u}^{k+1}\big|^2_{2}+\big|h^{k+1}\nabla^2\overline{u}^{k+1}\big|^2_{2}\big)\text{d}s\Big)
\leq C<\infty.
\end{split}
\end{equation*}
It follows from
\begin{equation*}\begin{split}
\lim_{k\mapsto \infty} \big|\overline{h}^{k+1}\big|_{\infty}
 \leq &\,C\lim_{k\mapsto\infty} \big|\overline{\psi}^{k+1}\big|^{\frac{1}{2}}_{2}\big|\nabla \overline{\psi}^{k+1}\big|^{\frac{1}{2}}_{2}
 \leq C\lim_{k\mapsto \infty} \big|\overline{\psi}^{k+1}\big|^{\frac{1}{2}}_{2}=0,\\
\lim_{k\mapsto \infty}\big|\overline{\psi}^{k+1}\big|_{6}
 \leq &\, C\lim_{k\mapsto\infty} \big|\overline{\psi}^{k+1}\big|^{\frac{2}{3}}_{\infty}\big|\overline{\psi}^{k+1}\big|^{\frac{1}{3}}_{2}
  \leq C\lim_{k\mapsto\infty} \big|\overline{\psi}^{k+1}\big|^{\frac{1}{3}}_{2}=0
\end{split}
\end{equation*}
that the whole sequence $(h^k, \psi^k,c^k,u^k)$
converges to a limit $(h, \psi, c,u)$ in the following strong sense:
\begin{equation}\label{str}
\begin{split}
&h^k \rightarrow h\,\,\, \text{in $L^\infty([0,T_*]\times \mathbb{R}^3)$},
\quad  \psi^k\rightarrow\psi \,\,\, \text{in $L^\infty([0,T_*];L^6(\mathbb{R}^3))$},\\
&(c^k, u^k)\rightarrow (c, u)\,\,\, \text{in $L^\infty([0,T_*];H^1(\mathbb{R}^3))$}.
\end{split}
\end{equation}

From the uniform estimates (\ref{jkk}),  we have the following weak/weak* convergence:
\begin{equation}\label{weakconvergence}
\begin{split}
\psi^k\rightharpoonup \psi \qquad &\text{weakly*  in $L^\infty([0,T_*];D^1\cap D^2)$},\\
U^k\rightharpoonup U \qquad &\text{weakly* in $L^\infty([0,T_*];H^3)$},\\
\nabla u^{k} \rightharpoonup  \nabla u \qquad &\text{weakly in $L^2([0,T_*];H^3)$},
\end{split}
\end{equation}
which, along with the weakly lower semi-continuity of norms, implies  that $(h, \psi,c, u)$ satisfies
\begin{equation*}
\begin{split}
\epsilon \big\|\psi(t)\big\|^2_{D^1\cap D^2}+\big\| U(t)\big\|^2_{3}
 +\int_{0}^{t} \epsilon \sum_{i=1}^4 \nu^2\big|\nabla^i u\big|^2_{2}\text{d}s\leq C d^2_0.
\end{split}
\end{equation*}
Then it follows from the above uniform estimates, the strong convergence in (\ref{str}),
and the weak convergence in (\ref{weakconvergence}) that
\begin{equation}\label{weakconvergence2}
\begin{split}
h^k\nabla^i u^k \rightharpoonup  h\nabla^i u \qquad \text{weakly in $L^2([0,T_*];L^2)\,\,$ for $i=1,2,3,4$,}
\end{split}
\end{equation}
which, along with the  weakly lower semi-continuity of norms again, implies that $(h, \psi,c, u)$ satisfies
the uniform estimates (\ref{jkk}).
Moreover, it follows from  (\ref{str})--(\ref{weakconvergence2}) and  the uniform estimates (\ref{jkk})
that $(h,\psi,U)$  satisfies   \eqref{middle}--\eqref{psiequation}
in the sense of distributions, respectively,  with the following regularity:
\begin{equation}\label{rjkqq}
\begin{split}
&h\in L^\infty ([0,T_*]\times \mathbb{R}^3),\quad\,\,    \psi \in L^\infty ([0,T_*];H^2),\\
&c \in L^\infty([0,T_*];H^3),\quad \quad u\in L^\infty([0,T_*]; H^3)\cap L^2([0,T_*]; H^4).
\end{split}
\end{equation}

Finally, we  need to verify the following  relation:
\begin{equation}\label{relationrecover}
\psi^{\epsilon}=\nabla  h.
\end{equation}
Denote
$\psi^*=\nabla  h$ and $ \overline{\psi}^*=\psi-\psi^*$.
Then, using equations  $(\ref{middle})_1$--$(\ref{middle})_2$, we have
\begin{equation}\label{liyhn-recover}
\begin{cases}
\displaystyle
\overline{\psi}^*_t+\sum_{l=1}^3 B_l(u) \partial_l\overline{\psi}^*+B(u)\overline{\psi}^*=0,\\[12pt]
\displaystyle
\overline{\psi}^*|_{t=0}=0   \qquad \text{in $\mathbb{R}^3$},
 \end{cases}
\end{equation}
which, together with a standard energy method, implies
$$
\overline{\psi}^*=0\ \qquad \text{for $(t,x)\in [0,T_*]\times \mathbb{R}^3$}.
$$
Then the relation in (\ref{relationrecover}) is verified.

\medskip
\paragraph{{\rm 2}. \em  Uniqueness.}
 Let $(h_1, c_1,u_1)$ and $(h_2,  c_2,u_2)$ be two solutions
 of the Cauchy problem (\ref{middle}) obtained in Step 1, and let $\psi_i=\nabla h_i$, $i=1,2$.
Define
\begin{equation*}\begin{cases}
\displaystyle \overline{h}:=h_1-h_2,\quad \overline{\psi}:=\psi_1-\psi_2=\nabla h_1-\nabla h_2,\\[3pt]
\displaystyle
\overline{c}:=c_1-c_2,\quad \  \overline{u}:=u_1-u_2, \quad \overline{U}:=U_1-U_2.
\end{cases}
\end{equation*}
Then $(\overline{h}, \overline{\psi}, \overline{c},\overline{u})$ satisfies the
following problem:
 \begin{equation}\label{zhuzhu}
\left\{\begin{aligned}
\displaystyle
&\overline{h}_t+u_1\cdot \nabla\overline{h}+\overline{u}\cdot\psi_2
+\frac{\delta-1}{2}\big(\overline{h} \text{div}u_2 +h_1\text{div}\overline{u}\big)=0,\\
 \displaystyle
&\overline{\psi}_t+\sum_{l=1}^3 B_l(u_1)
\partial_l\overline{\psi}+B(u_{1})\overline{\psi}
+\frac{\delta-1}{2}\big(\overline{h} \nabla\text{div}u_2 +h_1\nabla \text{div}\overline{u}\big)\\
\displaystyle
&\quad =-\sum_{l=1}^3B_l(\overline{u})
\partial_l\psi_{2}-B(\overline{u}) \psi_{2},\\[2pt]
\displaystyle
& A_0\overline{U}_t+\sum_{l=1}^3A_l(U_1)\partial_l\overline{U}+
\epsilon F(\nu,h_1, \overline{u}) \\
 \displaystyle
&\quad =-\sum_{l=1}^3 A_l(\overline{U})\partial_l U_2-\epsilon\big(F(\nu,h_1, u_2)-F(\nu,h_2, u_2)\big)\\
\displaystyle
&\qquad +
\epsilon\Big( G(h_1,\overline{\psi}, u_1)+G(h_1,\psi_2, \overline{u})+G(\overline{h}, \psi_2, u_2)\Big),\\
 \displaystyle
&(\overline{h}, \overline{\psi}, \overline{U})|_{t=0}=(0,0,0)\qquad\,\, \mbox{for $x\in\mathbb{R}^3$}.
\end{aligned}
\right.
\end{equation}

Denote
$$
\Phi(t)=\epsilon\big(\big|\overline{h}(t)\big|^2_{ 2}
  +\big|\overline{\psi}(t)\big|^2_{2}\big)+\big\|(\overline{c},\overline{u})(t)\big\|^2_1.
$$
Similarly to the derivation of (\ref{difference1})--(\ref{difference2}) and (\ref{gogo13}), we can show
\begin{equation}\label{gonm}\begin{split}
\frac{\rm d}{{\rm d} t}\Phi(t)+\epsilon \big(\big|h_1\nabla \overline{u}(t)\big|^2_2
     +\big|h_1\nabla^2 \overline{u}(t)\big|^2_2+\nu^2\big\|\nabla \overline{u}(t)\big\|^2_1\big)
\leq G(t)\Phi (t),
\end{split}
\end{equation}
where $ \int_{0}^{t}G(s)\,\dd s\leq C$ for $0\leq t\leq T_*$.
From the
Gronwall inequality, we conclude that
$$
\overline{h}=\overline{\psi}=\overline{c}=\overline{u}=0,
$$
which gives the desired  uniqueness.

\medskip
\paragraph{{\rm 3}. \em  Time-continuity.} The time-continuity  follows easily from  the same  procedure as in Lemma \ref{lem1}.

This completes the proof.
\end{proof}

\smallskip
\begin{remark}\label{cancelproof}
For completeness, we now give the proof of Lemma {\rm\ref{cancel}}, which is  quite direct.

\medskip
\noindent
{\it Proof of Lemma {\rm\ref{cancel}}}.
We define
$\chi_R(x)=\chi(\frac{x}{R})$
with $\chi(x)\in C^\infty_c(\mathbb{R}^3)$ as a truncation function
satisfying \eqref{eq:2.6-77A}.

Denote $\overline{h}^{k+1,R}=\overline{h}^{k+1} \chi_R$. From $(\ref{eq:1.2w})_1$, we have
\begin{align}
\displaystyle
& \overline{h}^{k+1,R}_t+u^k\cdot \nabla\overline{h}^{k+1,R} +\frac{\delta-1}{2}(\overline{h}^{k,R} \text{div}\,u^{k-1}
  +h^{k}\text{div}\,\overline{u}^k\chi_R)\nonumber\\[2pt]
&=u^k\overline{h}^{k+1}\cdot \nabla \chi_R-\overline{u}^k\cdot\psi^{k}\chi_R. \label{localversion}
\end{align}
Then multiplying the above equation by $2\overline{h}^{k+1,R}$ and integrating over $\mathbb{R}^3$ yield
 \begin{align}
\displaystyle
\frac{\rm d}{{\rm d} t}\big|\overline{h}^{k+1,R}\big|_2
\leq &\, C\big|\nabla u^k\big|_\infty\big|\overline{h}^{k+1,R}\big|_2
    +C\big(\big|\overline{h}^{k}\big|_\infty \big|\text{div}\,u^{k-1}\big|_2
    +\big|h^{k}\big|_\infty\big|\text{div}\,\overline{u}^k\big|_2\big)\nonumber\\[3pt]
&\,+C\big(\big|u^k\big|_2\big|\overline{h}^{k+1}\big|_\infty
    +\big|\overline{u}^k\big|_2\big|\psi^{k}\big|_\infty\big)\nonumber\\[3pt]
\leq &\, \hat{C}\big|\overline{h}^{k+1,R}\big|_2+\hat{C}, \label{localversion2}
\end{align}
where $\hat{C}>0$ is a constant depending on the generic constant $C$ and $(d_0, d_1,\eta)$, but
is independent of $R$.
Then, using the Gronwall inequality, we have
$$
\big|\overline{h}^{k+1,R}(t)\big|_2\leq  \hat{C}\exp{(\hat{C}T^{**})} \qquad \text{for $(t,R)\in [0,T^{**}]\times [0,\infty)$}.
$$
Then $\overline{h}^{k+1}\in L^\infty([0,T^{**}]; L^2(\mathbb{R}^3))$, which, along with
$\overline{h}^{k+1}=h^{k+1}-h^k$ and
$$
\nabla h^k=\psi^k\in L^\infty([0,T^{**}];D^1\cap D^2(\mathbb{R}^3)),
$$
implies that $\overline{h}^{k+1}\in L^\infty([0,T^{**}]; H^3(\mathbb{R}^3))$.
\end{remark}

\subsection{Passing to the limit as $\nu \rightarrow 0$}
Now we consider the following nonlinear problem for $(h,c,u)$:
\begin{equation}
\label{li4111}
\begin{cases}
 \displaystyle
h_t+u\cdot \nabla h+\frac{\delta-1}{2}h\text{div} u=0,\\[2pt]
\displaystyle
A_0 U_t+\sum^{3}_{j=1}A_j(U)\partial_j U=-\epsilon F(h, u)+\epsilon G(h,\psi, u),\\[4pt]
  \displaystyle
(h, c,u)|_{t=0}=(h_0,  c_0,u_0)(x)
=((A\gamma)^{-\frac{\iota}{2}}(c_0+\eta)^{\iota}, c_0, u_0)(x)\qquad\mbox{for $x\in \mathbb{R}^3$},
\end{cases}
\end{equation}
where $\psi=\nabla h$ satisfying equations \eqref{psiequation}.

The main result in this subsection can be stated as follows:

\begin{theorem}\label{lem111}
Let \eqref{canshu} hold, and let $(\eta,\epsilon)\in (0,1]\times (0,1]$.
Assume that the initial data  $(c_{0}, u_{0})$ satisfy
\eqref{zhenginitial}.
Then there exist $T_*>0$ independent of $(\eta,\epsilon)$ and  a unique strong solution $(h,c, u)$
of problem \eqref{li4111} in $[0,T_*]\times \mathbb{R}^3$ satisfying the uniform  estimates \eqref{jkk},
\begin{equation}\label{reggh111}\begin{split}
&h\in L^\infty\cap C ([0,T_*]\times \mathbb{R}^3),\quad   \psi \in C([0,T_*];H^2),\\
& c \in C([0,T_*];H^3),\quad   u\in C([0,T_*]; H^3)\cap L^2([0,T_*]; H^4),
\end{split}
\end{equation}
and
$$
\frac{C}{2}\big(1+|c_0|_\infty\big)^\iota \leq h^{\eta, \epsilon}(t,x)
\leq C\eta^\iota \qquad \text{for $(t,x)\in [0,T_*]\times \mathbb{R}^3$},
$$
where $C>0$ is a constant independent of $(\eta, \epsilon)$.
\end{theorem}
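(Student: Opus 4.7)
The plan is to take the family of approximate solutions $(h^\nu, c^\nu, u^\nu)$ provided by Theorem \ref{th1zx111} for the regularized problem \eqref{middle} and to pass to the limit $\nu\to 0$. By Theorem \ref{th1zx111}, these solutions exist on a common time interval $[0,T_*]$ with $T_*$ independent of $(\nu,\eta,\epsilon)$, and they satisfy the uniform bounds \eqref{jkk}, in particular
\begin{equation*}
\sup_{0\le t\le T_*}\Big(\epsilon\|\psi^\nu(t)\|_{D^1\cap D^2}^2+\|(c^\nu,u^\nu)(t)\|_3^2\Big)
+\epsilon\int_0^{T_*}\sum_{i=1}^{4}\big(|h^\nu\nabla^i u^\nu|_2^2+\nu^2|\nabla^i u^\nu|_2^2\big)\,\dd t\le d_1^2.
\end{equation*}
From these bounds I extract a (not relabelled) subsequence with $\psi^\nu\rightharpoonup^* \psi$ in $L^\infty([0,T_*];D^1\cap D^2)$, $(c^\nu,u^\nu)\rightharpoonup^*(c,u)$ in $L^\infty([0,T_*];H^3)$, and $\nabla u^\nu\rightharpoonup \nabla u$ in $L^2([0,T_*];H^3)$. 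The extra dissipation term is negligible in the limit, since $\|\nu^2\nabla^i u^\nu\|_{L^2_tL^2_x}=\nu\cdot\|\nu\nabla^i u^\nu\|_{L^2_tL^2_x}\le \nu\, d_1\epsilon^{-1/2}\to 0$, so $\nu^2 Lu^\nu\to 0$ strongly in $L^2([0,T_*];L^2)$.

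To pass to the limit in the quasilinear terms $h^2 Lu$ and $h\psi Q(u)$, I need strong convergence. This I obtain by adapting the contraction-type estimates already developed in Step~1 of the proof of Theorem \ref{th1zx111}: for $\nu_1,\nu_2>0$, set $(\bar h,\bar\psi,\bar c,\bar u)=(h^{\nu_1}-h^{\nu_2},\psi^{\nu_1}-\psi^{\nu_2},c^{\nu_1}-c^{\nu_2},u^{\nu_1}-u^{\nu_2})$, derive the equation for the differences (which now contains the additional forcing $-\epsilon a_1(\nu_1^2-\nu_2^2)Lu^{\nu_2}$), and perform $L^2$ and $H^1$ energy estimates exactly as in \eqref{difference1}--\eqref{gogo13}. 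The extra forcing contributes $O(\nu_1^2+\nu_2^2)$ to the right-hand side after using the uniform bound $\nu_i\|\nabla^2 u^{\nu_i}\|_{L^2_tL^2_x}\le Cd_1\epsilon^{-1/2}$. A Gronwall argument then yields that $(h^\nu,\psi^\nu,c^\nu,u^\nu)$ is Cauchy in $L^\infty([0,T_*];L^2)\cap L^\infty([0,T_*];L^6)\cap L^\infty([0,T_*];H^1)\cap L^\infty([0,T_*];H^1)$, giving strong convergence in these spaces. Interpolating with the uniform high-order bounds, I obtain strong convergence of $h^\nu\to h$ in $L^\infty([0,T_*]\times\R^3)$, $\psi^\nu\to\psi$ in $L^\infty([0,T_*];L^q)$ for every $q\in[2,\infty)$, and $(c^\nu,u^\nu)\to(c,u)$ in $L^\infty([0,T_*];H^{s'})$ for every $s'<3$. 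These are sufficient to pass to the limit in every nonlinear term of \eqref{middle}, including $h^\nu\psi^\nu Q(u^\nu)\to h\psi Q(u)$ and $(h^\nu)^2 Lu^\nu\to h^2 Lu$ (weakly in $L^2([0,T_*];L^2)$, using the strong convergence of $h^\nu$ and the weak convergence of $\nabla^2 u^\nu$); the weak lower semicontinuity of norms then transfers the uniform bounds \eqref{jkk} to the limit.

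The pointwise bounds on $h$ follow from the transport structure of $(\ref{li4111})_1$: along the characteristics $\dot X=u(t,X)$,
\begin{equation*}
h(t,X(t))=h_0(X(0))\exp\!\Big(\!-\tfrac{\delta-1}{2}\!\int_0^t\mathrm{div}\,u(s,X(s))\,\dd s\Big),
\end{equation*}
and the uniform $H^3$ bound on $u$ gives $|\mathrm{div}\,u|_\infty\le C$ on $[0,T_*]$, so $h$ stays comparable to $h_0=(A\gamma)^{-\iota/2}(c_0+\eta)^\iota$, which is itself trapped between $(A\gamma)^{-\iota/2}(|c_0|_\infty+1)^{\iota}$ and $(A\gamma)^{-\iota/2}\eta^\iota$ (recall $\iota<0$); shrinking $T_*$ if necessary absorbs the exponential into the constant $C$.

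For uniqueness, I argue exactly as in Step~2 of the proof of Theorem \ref{th1zx111}: two strong solutions $(h_i,c_i,u_i)$ satisfying \eqref{reggh111} and \eqref{jkk} produce a difference system identical to \eqref{zhuzhu} (with $\nu=0$), leading to the same differential inequality \eqref{gonm} and hence to coincidence via Gronwall. The time-continuity $\psi\in C([0,T_*];H^2)$, $c\in C([0,T_*];H^3)$, and $u\in C([0,T_*];H^3_{\rm loc})$ follows from the transport/symmetric hyperbolic structure combined with the weak continuity deduced from the equations, exactly as in Lemma \ref{lem1}. The main obstacle is the third step: controlling $(h^{\nu_1})^2 Lu^{\nu_1}-(h^{\nu_2})^2 Lu^{\nu_2}$ in the difference estimates requires that $(h^\nu)^2$ stay uniformly bounded above, which is precisely ensured by the $\eta$-regularization of the initial density through the upper bound $h\le C\eta^\iota$, and this is why the constants in the Cauchy estimate are allowed to depend on $\eta$ even though the energy estimate \eqref{jkk} does not.
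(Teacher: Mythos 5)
Your proposal is correct in outline and reaches the same conclusion, but it replaces the paper's compactness step by a different mechanism. The paper first derives the pointwise bounds and the bound $\|h^{\nu,\eta,\epsilon}\|_{D^1}+|h^{\nu,\eta,\epsilon}_t|_2\le \hat C(\eta,\dots)$ via characteristics (exactly as you do), and then invokes the Aubin--Lions lemma (Lemma \ref{aubin}) to extract a subsequence converging strongly in $C([0,T_*];H^2(B_R))$ on every ball, passing to the limit in the weak formulation against compactly supported test functions; lower semicontinuity then transfers \eqref{jkk}. You instead run the contraction estimates of Theorem \ref{th1zx111} on the difference of two approximate solutions to show the family is Cauchy in low norms, which buys convergence of the whole family (no subsequence extraction) and makes the relation $\psi=\nabla h$ pass to the limit automatically, at the price of redoing the difference estimates with the extra forcing. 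Both routes are legitimate; the uniqueness, pointwise bounds, and time-continuity steps coincide with the paper's.

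One step needs repair as written. You bound the extra forcing $-\epsilon a_1(\nu_1^2-\nu_2^2)Lu^{\nu_2}$ by invoking $\nu_i\|\nabla^2 u^{\nu_i}\|_{L^2_tL^2_x}\le Cd_1\epsilon^{-1/2}$; applied with $i=2$ this only gives $\|\nabla^2 u^{\nu_2}\|_{L^2_tL^2_x}\le Cd_1\epsilon^{-1/2}\nu_2^{-1}$, so the contribution is of order $(\nu_1^2+\nu_2^2)\nu_2^{-1}$, which is not uniformly small over all pairs $\nu_1,\nu_2\to 0$ (take $\nu_2=\nu_1^4$), and the Cauchy property does not follow. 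The fix is available from your own pointwise bounds: since $h^{\nu_2}\ge \frac{C}{2}(1+|c_0|_\infty)^{\iota}>0$ uniformly in $(\nu,\eta,\epsilon)$, the estimate $\epsilon\int_0^{T_*}|h^{\nu_2}\nabla^2 u^{\nu_2}|_2^2\,\dd t\le d_1^2$ in \eqref{jkk} yields $\|\nabla^2 u^{\nu_2}\|_{L^2_tL^2_x}\le C(c_0)d_1\epsilon^{-1/2}$ \emph{independently of} $\nu_2$, after which the forcing contributes $O\big((\nu_1^2+\nu_2^2)\epsilon^{1/2}\big)$ and vanishes. With this correction (and noting that the statement asserts $u\in C([0,T_*];H^3)$ globally, which for fixed $\eta>0$ follows from the uniform upper bound $h\le C\eta^{\iota}$, not merely the $H^3_{\rm loc}$ continuity you cite), the argument closes.
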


\begin{proof} We prove the existence, uniqueness, and time-continuity in two steps.

\smallskip
\paragraph{{\rm 1}. \em  Existence.}
First, from Theorem  \ref{th1zx111},
for every $(\nu,\eta, \epsilon)\in (0,1]\times(0,1]\times (0,1]$,
there exist $T_*>0$ independent of $(\nu,\eta,\epsilon)$ and   a unique  strong solution
$$
(h^{\nu,\eta, \epsilon}, c^{\nu,\eta, \epsilon}, u^{\nu,\eta, \epsilon})(t,x) \qquad\mbox{in $[0,T_*]\times \mathbb{R}^3$}
$$
of problem (\ref{middle}) satisfying the estimates in  (\ref{jkk}), which are independent of
$(\nu,\eta,\epsilon)$.
Moreover, applying the characteristic method and the standard energy estimates for the transport equations,
and using equations $(\ref{middle})_1$--$(\ref{middle})_2$ for $h^{\nu,\eta, \epsilon}$ and $\psi^{\nu,\eta, \epsilon}$,
we have
\begin{equation}\label{related}
\begin{split}
&\frac{C}{2} \big(1+|c_0|_\infty\big)^\iota \leq h^{\nu,\eta, \epsilon}(t,x) \leq C\eta^\iota \qquad
  \text{for $(t,x)\in [0,T_*]\times \mathbb{R}^3$},\\
&\big\|h^{\nu,\eta, \epsilon}(t)\big\|_{ D^1}+ \big|h^{\nu,\eta, \epsilon}_t(t)\big|_{2}
\leq \hat{C}(\eta, \alpha, \beta, \gamma, \delta, T_*,c_0,u_0)
  \qquad \text{for $0\leq t\leq T_*$}.
\end{split}
\end{equation}

Then,  by virtue of the uniform estimates in (\ref{jkk}) independent of  $(\nu,\eta, \epsilon)$,
the estimates in (\ref{related}) independent of $(\nu, \epsilon)$,
and  the compactness in Lemma \ref{aubin} (see \cite{jm}),
we obtain that, for any $R> 0$,  there exists a subsequence of solutions (still denoted by)
$(h^{\nu,\eta, \epsilon}, c^{\nu,\eta, \epsilon},  u^{\nu,\eta, \epsilon})$ converging
to  a limit $(h^{\eta, \epsilon}, c^{\eta, \epsilon}, u^{\eta, \epsilon})$ as  $ \nu \rightarrow 0$
in the following  strong sense:
\begin{equation}\label{ert1}\begin{split}
&(h^{\nu,\eta, \epsilon}, c^{\nu,\eta, \epsilon}, u^{\nu,\eta, \epsilon} )
  \rightarrow (h^{\eta, \epsilon}, c^{\eta, \epsilon},  u^{\eta, \epsilon})
\qquad \text{ in $C([0,T_*];H^2(B_R))$}
\end{split}
\end{equation}
for any finite constant $R>0$, where $B_R$ is a ball centered at the origin with radius $R$.

Again, it follows from the uniform estimates in (\ref{jkk}) and (\ref{related})
that there exists a further subsequence (of the subsequence chosen above) of solutions (still denoted by)
$(h^{\nu,\eta, \epsilon}, c^{\nu,\eta, \epsilon},  u^{\nu,\eta, \epsilon})$ converging to
$(h^{\eta, \epsilon}, c^{\eta, \epsilon},  u^{\eta, \epsilon})$ as $\nu \rightarrow 0$ in the following sense:
\begin{equation}\label{ruojixian}
\begin{split}
(c^{\nu,\eta, \epsilon},  u^{\nu,\eta, \epsilon})\rightharpoonup  (c^{\eta, \epsilon},  u^{\eta, \epsilon})
         \qquad &\text{weakly* in $L^\infty([0,T_*];H^3)$},\\
h^{\nu,\eta, \epsilon} \rightharpoonup h^{\eta, \epsilon} \qquad &\text{weakly* in $L^\infty([0,T_*]\times \mathbb{R}^3)$},\\
\psi^{\nu,\eta, \epsilon}=\nabla h^{\nu,\eta, \epsilon} \rightharpoonup \psi^{\eta, \epsilon} \qquad &\text{weakly* in $L^\infty([0,T_*];H^2)$},\\
\nabla u^{\nu,\eta, \epsilon} \rightharpoonup  \nabla u^{\eta, \epsilon} \qquad &\text{weakly in $L^2([0,T_*];H^3)$},
\end{split}
\end{equation}
which, along with  the lower semicontinuity of weak/weak* convergence of the norms,
implies  that $(h^{\eta, \epsilon},  \psi^{\eta, \epsilon},  c^{\eta, \epsilon},  u^{\eta, \epsilon})$ also satisfies the corresponding
estimates in (\ref{jkk}) and (\ref{related}), except those estimates on $h^{\eta,\epsilon}\nabla^i u^{\eta,\epsilon}$ for $i=1,2,3,4$.

Together with the uniform estimates on $(h^{\eta, \epsilon}, u^{\eta, \epsilon})$ obtained above,
the strong convergence in (\ref{ert1}), and the weak/weak* convergence in  (\ref{ruojixian}), we obtain that, for $i=1,2,3,4$,
\begin{equation}\label{ruojixian2}
\begin{split}
h^{\nu, \epsilon,\eta}\nabla^i u^{\nu, \eta, \epsilon} \rightharpoonup  h^{\eta, \epsilon}\nabla^i u^{\eta, \epsilon}
\qquad &\text{weakly in $L^2([0,T_*]; L^2)$},
\end{split}
\end{equation}
which, along with the lower semicontinuity of weak convergence again, implies
that $(h^{\eta, \epsilon}, u^{\eta, \epsilon})$ also satisfies the uniform estimates
on $h^{\eta,\epsilon}\nabla^i u^{\eta,\epsilon}$ for $i=1,2,3,4$.

Now we show that $(h^{\eta, \epsilon}, c^{\eta, \epsilon},  u^{\eta, \epsilon})$ is a weak solution of problem (\ref{li4111})
in the sense of distributions.
First, multiplying the equations for the time evolution of $u$  in $(\ref{li4111})$
by a test function  $w(t,x)=(w^1,w^2,w^3)\in C^\infty_c ([0,T_*)\times \mathbb{R}^3)$
on both sides and integrating over $[0,T_*]\times \mathbb{R}^3$, we have
\begin{equation}\label{zhenzheng1}
\begin{split}
&\int_0^{T_*} \int_{\R^3}  \Big(u^{\nu,\eta, \epsilon} \cdot w_t - (u^{\nu,\eta, \epsilon} \cdot \nabla) u^{\nu,\eta, \epsilon} \cdot w
  +\frac{1}{\gamma-1}\big(c^{\nu,\eta, \epsilon}\big)^2\text{div} w \Big)\text{d}x\text{d}t\\
&\quad +\int_{\R^3} u_0(x) \cdot w(0,x)\,\dd x\\
&=\int_0^{T_*} \int \epsilon \Big( \big((h^{\nu,\eta, \epsilon})^2+\nu^2\big) Lu^{\nu,\eta, \epsilon}
   -\frac{2\delta}{\delta-1}h^{\nu,\eta, \epsilon}\psi^{\nu,\eta, \epsilon} \cdot Q(u^{\nu,\eta, \epsilon})\Big)\cdot w\, \text{d}x\text{d}t.
\end{split}
\end{equation}
Combining the uniform estimates obtained above with the strong convergence in (\ref{ert1})
and the weak convergence in (\ref{ruojixian})--(\ref{ruojixian2}),
and letting $\nu \rightarrow 0$ in (\ref{zhenzheng1}), we obtain
\begin{equation}\label{zhenzhengxx}
\begin{split}
&\int_0^{T_*} \int_{\R^3}  \Big(u^{\eta, \epsilon} \cdot w_t - (u^{\eta, \epsilon} \cdot \nabla) u^{\eta, \epsilon} \cdot w
  +\frac{1}{\gamma-1}\big(c^{\eta, \epsilon}\big)^2\text{div} w \Big)\text{d}x\text{d}t\\
&\quad +\int_{\R^3} u_0(x) \cdot w(0,x)\, \text{d} x\\
&=\int_0^{T_*} \int_{\R^3} \epsilon \Big( (h^{\eta, \epsilon})^2 Lu^{\eta, \epsilon}
  -\frac{2\delta}{\delta-1}h^{\eta, \epsilon}\psi^{\eta, \epsilon} \cdot Q(u^{\eta, \epsilon}) \Big)\cdot w\, \text{d}x\text{d}t.
\end{split}
\end{equation}

Similarly, we can use the same argument to show that $(h^{\eta, \epsilon},  c^{\eta,\epsilon},  u^{\eta,\epsilon})$
satisfies the other equations in  $(\ref{li4111})$ and the corresponding initial data in the sense of distributions.
Thus, $(h^{\eta, \epsilon}, c^{\eta, \epsilon},  u^{\eta, \epsilon})$ is a weak solution of problem (\ref{li4111})
in the sense of distributions satisfying the  following regularity:
\begin{equation}\label{zheng}
\begin{split}
&h^{\eta, \epsilon}\in (L^\infty\cap C) ([0,T_*]\times \mathbb{R}^3),\quad   \psi^{\eta, \epsilon} \in L^\infty([0,T_*];H^2),\\
& c^{\eta, \epsilon} \in L^\infty([0,T_*];H^3),\quad   u^{\eta, \epsilon}\in L^\infty([0,T_*]; H^3)\cap L^2([0,T_*]; H^4).
\end{split}
\end{equation}
Therefore, this  solution $(h^{\eta, \epsilon}, c^{\eta, \epsilon},  u^{\eta, \epsilon})$ of problem \eqref{li4111}
is actually a strong solution.
The relation $\psi^{\eta, \epsilon}=\nabla h^{\eta, \epsilon}$ can be verified
by the same argument used in the proof of (\ref{relationrecover}).

\medskip
\paragraph{{\rm 2}. \em  Uniqueness and time-continuity.}
Owing to the lower bound estimate of $h^{\eta, \epsilon}$:
$$
h^{\eta, \epsilon}(t,x)\geq \frac{C}{2} \big(1+|c_0|_\infty\big)^\iota\qquad\,\, \text{for $(t,x)\in [0,T_*]\times \mathbb{R}^3$},
$$
the uniqueness and time-continuity of the strong solutions obtained above
can be proved via the similar argument to Theorem  \ref{th1zx111}; hence we omit its details.
\end{proof}

\subsection{Passing to the limit as $\eta \rightarrow 0$}
Based on the conclusions obtained in the above subsections,
we are now ready to prove Theorem \ref{th1}.

\begin{proof} We divide the proof into three  steps.

\smallskip
\paragraph{{\rm 1}. \em  Existence.}
As in \S 4.2, we define constants $d_i$, $i=0,1$.
For any $\eta\in (0,1)$, set
$$
h^\eta_0=(A\gamma)^{-\frac{\iota}{2}}\big(c_0+\eta\big)^{\iota},
\quad  \psi^\eta_{ 0}=(A\gamma)^{-\frac{\iota}{2}}\nabla \big(c_0+\eta\big)^{\iota}.
$$
Then there exists $\eta_{1}>0$ such that, if $0<\eta<\eta_{1}$,
\begin{equation*}\begin{split}
\eta+\epsilon^{\frac{1}{2} } \|\psi^\eta_{0}\|_{D^1\cap D^2}+|(h^{\eta}_0)^{-1}|_{\infty}+\epsilon^{\frac{1}{4} }|\nabla c^{\frac{\iota}{2}}_0|_4
+\|(c_0,u_0)\|_3\leq&\, \overline{d}_0,
\end{split}
\end{equation*}
where we have used the fact that $\epsilon^{\frac{1}{4}}\nabla c^{\frac{\iota}{2}}_0 \in  L^4$,
and $\overline{d}_0$ is a positive constant independent of $\eta$.
Therefore, taking $(h^\eta_{0},  c_0, u_0)$ as the initial data,
problem (\ref{li4111})  admits a unique strong  solution $(h^{\eta, \epsilon},  c^{\eta, \epsilon},  u^{\eta, \epsilon})$
in $[0,T_*]\times \mathbb{R}^3$ satisfying the local estimates in (\ref{jkk}) with $d_0$ replaced by $\overline{d}_0$,
and the life span $T_*$ is also independent of $(\eta,\epsilon)$.
Moreover, we also know that
$$
h^{\eta, \epsilon}\geq \frac{C}{2} (1+|c_0|_\infty)^\iota\qquad \text{for $(t,x)\in [0,T_*]\times \mathbb{R}^3$},
$$
where $C>0$ is a constant independent of $(\eta, \epsilon)$.

We first state the following lemma:
\begin{lemma}\label{localupperbound}
For any $R_0>0$ and $(\eta,\epsilon) \in (0,1]\times (0,1]$,
there exists a constant $b_{R_0}>0$ such that
\begin{equation}\label{localbound}
h^{\eta,\epsilon}(t,x)\leq b_{R_0} \qquad \mbox{for any $(t,x)\in [0,T_*] \times B_{R_0}$},
\end{equation}
where $b_{R_0}>0$ is a constant independent of $(\eta,\epsilon)$.
\end{lemma}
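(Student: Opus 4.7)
The plan is to integrate the transport equation $(\ref{li4111})_1$ for $h^{\eta,\epsilon}$ along the characteristic curves generated by $u^{\eta,\epsilon}$, and then to exploit the uniform $H^3$ estimate \eqref{jkk} together with the strict positivity and continuity of $c_0$. Let $X(t;y)$ be the characteristic defined by $\dot X(t;y)=u^{\eta,\epsilon}(t,X(t;y))$ with $X(0;y)=y$. Since $\|u^{\eta,\epsilon}\|_{3}\leq \bar d_0$ and $H^3\hookrightarrow C^1_B(\mathbb{R}^3)$, the map $y\mapsto X(t;y)$ is a $C^1$-diffeomorphism for each $t\in[0,T_*]$; write $Y(t;\cdot)$ for its inverse. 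Rewriting $(\ref{li4111})_1$ as an ODE along this flow gives
\begin{equation*}
h^{\eta,\epsilon}(t,X(t;y)) = h^{\eta}_0(y)\exp\!\Big(\tfrac{1-\delta}{2}\int_0^t \mathrm{div}\, u^{\eta,\epsilon}(s,X(s;y))\,{\rm d}s\Big).
\end{equation*}

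By the Sobolev embedding $H^2\hookrightarrow L^\infty$ and \eqref{jkk},
\begin{equation*}
\|u^{\eta,\epsilon}\|_{L^\infty([0,T_*]\times\mathbb{R}^3)}+\|\mathrm{div}\, u^{\eta,\epsilon}\|_{L^\infty([0,T_*]\times\mathbb{R}^3)} \leq C\bar d_0,
\end{equation*}
with $C$ independent of $(\eta,\epsilon)$. Hence the exponential factor above is bounded by $e^{C T_*\bar d_0}$ uniformly in $(\eta,\epsilon)$, and the backward flow satisfies $|Y(t;x)-x|\leq CT_*\bar d_0 =: C_1$ for every $x\in\mathbb{R}^3$ and every $t\in[0,T_*]$.

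Now fix $R_0>0$ and $x\in B_{R_0}$, and set $y:=Y(t;x)\in B_{R_0+C_1}$. Because $c_0\in H^3(\mathbb{R}^3)\subset C_B(\mathbb{R}^3)$ is continuous and strictly positive, it attains a positive minimum $m_{R_0+C_1}>0$ on the compact set $\overline{B_{R_0+C_1}}$. Since $\iota<0$ and $\eta\geq 0$, the function $s\mapsto s^\iota$ is decreasing on $(0,\infty)$, so
\begin{equation*}
h^{\eta}_0(y) = (A\gamma)^{-\iota/2}(c_0(y)+\eta)^\iota \leq (A\gamma)^{-\iota/2}\, c_0(y)^\iota \leq (A\gamma)^{-\iota/2}(m_{R_0+C_1})^\iota.
\end{equation*}
Combining with the representation formula yields \eqref{localbound} with
\begin{equation*}
b_{R_0} := (A\gamma)^{-\iota/2}\,(m_{R_0+C_1})^\iota \exp\!\big(CT_*\bar d_0\big),
\end{equation*}
a constant depending only on $R_0$, $c_0$, and the universal parameters.

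The only delicate point -- the place where one must verify that the bound is truly $\eta$-independent -- is the estimate of $h^{\eta}_0$: the monotonicity $(c_0+\eta)^\iota\leq c_0^\iota$, valid precisely because $\iota<0$, is what converts the $\eta$-regularized initial datum into a bound depending solely on the original $c_0$. All other ingredients are straightforward kinematic consequences of the uniform bounds on $u^{\eta,\epsilon}$ provided by \eqref{jkk}, so no further obstacle arises.
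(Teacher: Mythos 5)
Your proof is correct and follows essentially the same route as the paper's: the transport representation of $h^{\eta,\epsilon}$ along characteristics, the uniform (in $(\eta,\epsilon)$) control of the particle displacement and of $\int_0^t|\mathrm{div}\,u^{\eta,\epsilon}|_\infty\,{\rm d}s$ via \eqref{jkk}, and the local positive lower bound of $c_0$ (with $\iota<0$) to bound $h_0^\eta$ uniformly in $\eta$. The only cosmetic difference is that you track characteristics backward from $x\in B_{R_0}$ while the paper tracks them forward and shows $B_{R'/2}\subset B(t,R')$.
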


\begin{proof} It suffices to consider the case when $R_0$ is sufficiently large.

First, from the  initial assumptions on $c_0$:
$$
c_0(x)>0,\quad c_0 \in  H^3,
$$
we obtain that the initial vacuum occurs only in the far-field.
Then, for every $R'>2$, there exists a constant $C_{R'}$ independent of $(\eta,\epsilon)$ such that
$$
c^\eta_0(x)  \geq C_{R'}+\eta>0 \qquad\,\, \mbox{for any $x\in  B_{R'}$},
$$
which implies
\begin{equation}\label{local1}
h^\eta_0(x)  \leq (A\gamma)^{-\frac{\iota}{2}}(C_{R'}+2\eta)^\iota \leq (A\gamma)^{-\frac{\iota}{2}}C^{\iota}_{R'} \qquad
\,\,\mbox{for any $x\in  B_{R'}$}.
\end{equation}

Second, let $x^{\eta,\epsilon}(t;x_0)$ be the particle path starting from $x_0$ at $t=0$:
\begin{equation}\label{local2}
\frac{\rm d}{\text{d}t}x^{\eta,\epsilon}(t;x_0)=u^{\eta,\epsilon}(t,x(t;x_0)),\quad x^{\eta,\epsilon}(0;x_0)=x_0.
\end{equation}
Denote by $B(t,R')$ the closed regions that are the images of $B_{R'}$  under the flow map (\ref{local2}):
\begin{equation*}
B(t,R')=\{x^{\eta,\epsilon}(t;x_0)\;:\; x_0\in B_{R'}\}.
\end{equation*}
From equation $ (\ref{li4111})_1$, we have
\begin{equation}\label{local3}
h^{\eta,\epsilon}(t,x)
=h^\eta_0(x_0)\exp\Big(-\frac{\delta-1}{2}\int_{0}^{t}\textrm{div} u^{\eta,\epsilon}(s,x^{\eta,\epsilon} (s;x_0))\,\text{d}s\Big).
\end{equation}
According to (\ref{jkk}), for $ 0\leq t \leq T_*$,
\begin{equation}\label{local4}
\begin{split}
&\int_0^t\big|\textrm{div} u^{\eta,\epsilon}(t,x^{\eta,\epsilon}(t;x_0))\big|\,\text{d}s
  \leq \int_0^t\big|\nabla u^{\eta,\epsilon}\big|_\infty\,\text{d}s\\
&\leq  \int_0^t \big\|\nabla u^{\eta,\epsilon}\big\|_2\,\text{d}s
 \leq t^{\frac{1}{2}}\Big(\int_0^t \big\|\nabla u^{\eta,\epsilon}\big\|^2_2\,\text{d}s\Big)^{\frac{1}{2}}
 \leq d_1T^{\frac{1}{2}}_*.
\end{split}
\end{equation}
Thus,  by  \eqref{local1} and  \eqref{local3}--\eqref{local4}, we see that, for $ 0\leq t \leq T_*$,
\begin{equation}\label{local5}
h^{\eta,\epsilon}(t,x)\leq  (A\gamma)^{-\frac{\iota}{2}}C^*C^\iota_{R'} \qquad\,\, \mbox{for any $x\in  B(t,R')$},
\end{equation}
where $C^*=\exp\big(\frac{1}{2}d_1T^{\frac{1}{2}}_*\big)$.

Finally, from  problem (\ref{local2}) and the estimates in (\ref{jkk}), we have
\begin{equation*}
|x_0-x|=|x_0-x^{\eta,\epsilon}(t;x_0)|
\leq  \int_0^t| u^{\eta,\epsilon}(\tau,x^{\eta,\epsilon}(\tau;x_0))|\,\text{d}\tau\leq d_1t\leq 1\leq  \frac{R'}{2}
\end{equation*}
for all $(t,x)\in [0,T_*]\times B_R$,  which implies that
$B_{R'/2} \subset B(t,R')$.
Therefore, we choose
$$
R'=2R_0,\qquad b_{R_0}=(A\gamma)^{-\frac{\iota}{2}}C^*C^\iota_{R'}
$$
to complete the proof.
\end{proof}

Then, for any $R>0$, it follows from Lemmas \ref{localupperbound} and \ref{aubin} that
there exists a subsequence (still denoted by) $(h^{\eta, \epsilon},  \psi^{\eta, \epsilon}, c^{\eta, \epsilon},  u^{\eta, \epsilon})$ such that
\begin{equation}\label{ert}
\begin{split}
(h^{\eta,\epsilon},c^{\eta, \epsilon},  u^{\eta, \epsilon})
  \rightarrow (h^\epsilon, c^{\epsilon},  u^{ \epsilon}) \qquad &\text{in $C([0,T_*];H^2(B_R))$},\\
\psi^{\eta, \epsilon}\rightarrow \psi^{ \epsilon}\qquad  &\text{in $C([0,T_*];H^1(B_R))$}.
\end{split}
\end{equation}
This, together with Lemma \ref{localupperbound}, yields the following lemma:

\begin{lemma}\label{localupperbound11}
For any $R_0>0$ and $\epsilon \in (0,1]$, there exists a constant $b_{R_0}>0$ such that
\begin{equation}\label{localbound111}
h^{\epsilon}(t,x)\leq b_{R_0} \qquad\,\, \mbox{for any $(t,x)\in [0,T_*] \times B_{R_0}$},
\end{equation}
where $b_{R_0}$ is independent of $\epsilon$.
\end{lemma}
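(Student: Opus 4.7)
The plan is to transfer the $(\eta,\epsilon)$-uniform local upper bound of Lemma \ref{localupperbound} from $h^{\eta,\epsilon}$ to its $\eta$-limit $h^\epsilon$ by means of the strong convergence already extracted in \eqref{ert}. First, I would fix $R_0>0$ and let $b_{R_0}>0$ denote the constant furnished by Lemma \ref{localupperbound}, so that
\[
h^{\eta,\epsilon}(t,x)\leq b_{R_0} \qquad \text{for all } (t,x)\in[0,T_*]\times B_{R_0} \text{ and all } (\eta,\epsilon)\in(0,1]\times(0,1].
\]
Crucially, the construction in Lemma \ref{localupperbound} shows that $b_{R_0}$ depends only on $R_0$, $T_*$, the fixed parameters $(A,\alpha,\beta,\gamma,\delta)$, and the uniform-in-$(\eta,\epsilon)$ constant $d_1$ from \eqref{jkk}. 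In particular, the explicit expression $b_{R_0}=(A\gamma)^{-\iota/2}C^{*}C_{R'}^{\iota}$ with $R'=2R_0$ and $C^{*}=\exp\bigl(\tfrac{1}{2}d_1T_{*}^{1/2}\bigr)$ makes the $\epsilon$-independence manifest.

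Next, I would invoke \eqref{ert} with $R=R_0$, which gives the strong convergence $h^{\eta,\epsilon}\to h^\epsilon$ in $C([0,T_*];H^2(B_{R_0}))$ along a subsequence as $\eta\to 0$. The three-dimensional Sobolev embedding $H^2(B_{R_0})\hookrightarrow C^0(\overline{B_{R_0}})$ then upgrades this to uniform convergence on $[0,T_*]\times\overline{B_{R_0}}$. Passing to the limit $\eta\to 0$ in the pointwise inequality above yields $h^\epsilon(t,x)\leq b_{R_0}$ on $[0,T_*]\times B_{R_0}$, as required.

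Since the genuine analytic work, namely the particle-path representation \eqref{local2}--\eqref{local5} together with the $\epsilon$-independent $H^3$ control of $u^{\eta,\epsilon}$ provided by \eqref{jkk}, has already been carried out in Lemma \ref{localupperbound}, no substantial obstacle remains. The only item that truly needs verification is that $b_{R_0}$ is genuinely $\epsilon$-independent; this is immediate from the formula above, because every ingredient entering it ($R_0$, $T_*$, $d_1$, the fixed physical constants, and the initial-data bound $C_{R'}$) is $\epsilon$-free. I would therefore present the result as a short corollary of Lemma \ref{localupperbound} and the compactness extracted in \eqref{ert}, rather than as a new estimate requiring its own energy argument.
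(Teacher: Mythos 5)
Your proposal is correct and follows essentially the same route as the paper, which obtains Lemma \ref{localupperbound11} directly by combining the $(\eta,\epsilon)$-uniform bound of Lemma \ref{localupperbound} with the strong convergence $h^{\eta,\epsilon}\to h^{\epsilon}$ in $C([0,T_*];H^2(B_R))$ from \eqref{ert}. Your added remarks on the Sobolev embedding $H^2(B_{R_0})\hookrightarrow C^0(\overline{B_{R_0}})$ and on the explicit $\epsilon$-independence of $b_{R_0}$ merely make explicit what the paper leaves implicit.
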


Next, note that estimates  (\ref{jkk}) are independent of $(\eta,\epsilon)$.
Then there exists a subsequence (still denoted by) $(h^{\eta, \epsilon},\psi^{\eta, \epsilon}, c^{\eta, \epsilon},  u^{\eta, \epsilon})$
converging to a limit $(h^{\epsilon}, \psi^{\epsilon}, c^{\epsilon},  u^{ \epsilon})$ in the weak or weak* sense:
\begin{equation}\label{ruojixianas}
\begin{split}
(c^{\eta, \epsilon},  u^{\eta, \epsilon})\rightharpoonup  (c^{\epsilon},  u^{ \epsilon}) \qquad &\text{weakly* in $L^\infty([0,T_*];H^3)$},\\
\psi^{\eta,\epsilon}  \rightharpoonup  \psi^\epsilon \qquad &\text{weakly* in $L^\infty([0,T_*];D^1\cap D^2)$},\\
 u^{\eta,\epsilon}  \rightharpoonup   u^\epsilon \qquad &\text{weakly in $L^2([0,T_*];H^4)$}.
\end{split}
\end{equation}
From the lower semicontinuity of the weak convergence,  $(h^{\epsilon}, \psi^{\epsilon},c^{\epsilon},  u^{ \epsilon})$ also
satisfies the corresponding  estimates (\ref{jkk}), except those of $h^\epsilon\nabla^i  u^\epsilon$ for $i=1,2,3,4$.

Together with the uniform estimates of $(h^{\epsilon}, \psi^{\epsilon},c^{\epsilon},  u^{ \epsilon})$ obtained above,
the strong convergence in (\ref{ert}),  and  the weak or weak* convergence in  (\ref{ruojixianas}),
we obtain that, for $i=1,2,3,4$,
\begin{equation}\label{ruojixianas2}
h^{\epsilon,\eta}\nabla^i u^{\epsilon,\eta} \rightharpoonup  h^{\epsilon}\nabla^i u^{\epsilon}\qquad \text{weakly in $L^2([0,T_*]; L^2)$},
\end{equation}
which, along with the lower semicontinuity of weak or weak* convergence again,
implies that $(h^{ \epsilon}, u^{ \epsilon})$ also satisfies the uniform estimates of $h^{\epsilon}\nabla^i u^{\epsilon}$ for $i=1,2,3,4$.

Thus, it is easy to show that $(h^\epsilon,  c^{\epsilon}, u^{\epsilon}) $ solves the following Cauchy problem
in the sense of distributions:
\begin{equation}
\label{eq:cccq2-reformlate}
\begin{cases}
\displaystyle h^\epsilon_t+u^\epsilon\cdot \nabla h^\epsilon+\frac{\delta-1}{2}h^\epsilon\text{div} u^\epsilon=0,\\[3pt]
\displaystyle
A_0 U^\epsilon_t+\sum^{3}_{j=1}A_j(U^\epsilon)\partial_j U^\epsilon
  = -\epsilon F(h^\epsilon, u^\epsilon)+ \epsilon G(h^\epsilon, \psi^\epsilon, u^\epsilon),
 \end{cases}
\end{equation}
with the following initial data
\begin{equation}\label{initialnew-reformulate}
 (h^\epsilon, c^\epsilon,u^\epsilon)|_{t=0}=(h_0, c_0,u_0)(x)
 =(\rho^{\frac{\delta-1}{2}}_0,  \sqrt{A\gamma}\rho^{\frac{\gamma-1}{2}}_0, u_0)(x) \qquad\mbox{for $x\in \mathbb{R}^3$},
\end{equation}
so that
\begin{equation}\label{farnew-reformulate}
(\rho_0, u_0)\rightarrow (0, 0) \qquad \text{as $|x|\rightarrow \infty$}.
\end{equation}
Moreover, in this step, even though the vacuum appears in the far-field, $\psi^{\epsilon}$ satisfies
$\partial_i (\psi^{\epsilon})^{(j)}=\partial_j (\psi^{\epsilon})^{(i)}$, $i,j=1,2,3$, and the following equation:
\begin{equation}\label{psiepsilon}
\displaystyle \psi^\epsilon_t+\sum_{l=1}^3 B_l(u^\epsilon) \partial_l\psi^\epsilon+ B(u^\epsilon)\psi^\epsilon
+ \frac{\delta-1}{2}h^\epsilon\nabla \text{div} u^\epsilon=0
\end{equation}
in the sense of distributions.

Finally, we need to verify the following  relations
\begin{equation}\label{relationback}
\psi^{\epsilon}=\nabla  h^{\epsilon}, \qquad\,\,  h^{\epsilon}=(A\gamma)^{-\frac{\iota}{2}}(c^\epsilon)^\iota.
\end{equation}
The first relation can be verified by the same argument used in the proof of (\ref{relationrecover}).
For the second, we denote
$$
h^*=(A\gamma)^{-\frac{\iota}{2}}(c^\epsilon)^\iota, \qquad\,\, \overline{h}^*=h^*-h^\epsilon.
$$
Then using equations  $(\ref{eq:cccq2-reformlate})_1$ and $(\ref{eq:cccq2-reformlate})_3$ leads to
\begin{equation}\label{liyhn-recoverback}
\begin{cases}
\displaystyle
\overline{h}^*_t+u^\epsilon\cdot \nabla \overline{h}^*+\frac{\delta-1}{2}\overline{h}^*\text{div} u^\epsilon=0,\\[3pt]
\displaystyle
\overline{h}^*|_{t=0}=0    \qquad \text{in $\mathbb{R}^3$},
\end{cases}
\end{equation}
which, together with the standard energy method, implies
$$
\overline{h}^*=0\ \qquad \text{for $(t,x)\in [0,T_*]\times \mathbb{R}^3$}.
$$
Then the relations in (\ref{relationback}) have been verified.

\medskip
\paragraph{{\rm 2}. \em  Uniqueness.}
Owing to the lower bound estimate of $h^{ \epsilon}$:
$$
h^{ \epsilon}(t,x)\geq \frac{C}{2} \big(1+|c_0|_\infty\big)^\iota \qquad \text{for $(t,x)\in [0,T_*]\times \mathbb{R}^3$},
$$
the uniqueness of the strong solutions obtained above can be proved
via a similar argument to that used in the proof of Theorem  \ref{th1zx111}, and hence we omit the details.

\medskip
\paragraph{{\rm 3}. \em  Time-continuity.}
First, from the uniform estimates in (\ref{jkk}) and the classical Sobolev embedding theorem,
we obtain that, for any $s'\in (0,3)$ and $s{''}\in (0,1)$,
\begin{equation}\label{zhengW}
\begin{split}
c^{ \epsilon} \in C([0,T_*]; H^{s'}\cap \text{weak-}H^3),
\qquad\, \nabla \psi^{\epsilon}\in C([0,T_*]; L^2 \cap \text{weak-}H^{s{''}}).
\end{split}
\end{equation}

Using similar arguments to the proof of Lemmas \ref{3}--\ref{2} yields
\begin{equation}\label{liu03}
\displaystyle \lim_{t\rightarrow 0}\sup \|c^{ \epsilon} (t)\|_3 \leq \|c_0\|_3,\qquad
\displaystyle \lim_{t\rightarrow 0}\sup \|\psi^{ \epsilon} (t)\|_{D^1\cap D^2} \leq \|\psi_0\|_{D^1\cap D^2},
\end{equation}
which, together with  Lemma \ref{zheng5} and  (\ref{zhengW}), implies  that  $(c^{ \epsilon},\psi^{ \epsilon})$ is
right-continuous at $t=0$ in $H^3$ and $D^1\cap D^2$, respectively.
The time reversibility of the equations in $(\ref{eq:cccq2-reformlate})$ for $(c^{\epsilon},\psi^{\epsilon})$
yields
\begin{equation}\label{xian}
c^{\epsilon} \in C([0,T_*]; H^3),\qquad  \psi^{ \epsilon}\in C([0,T_*]; D^1\cap D^2).
\end{equation}

For velocity $u^\epsilon$, from the equations in $(\ref{eq:cccq2-reformlate})$ for $u^\epsilon$,
Lemma \ref{localupperbound11}, and the classical Sobolev embedding theorem, we conclude that
$u^\epsilon \in C([0,T_*]; H^3_{\rm loc}(\mathbb{R}^3))$.
\end{proof}

\section{Proof of Theorem \ref{th2}}
Based on the conclusion of Theorem \ref{th1}, we are ready to establish the local-in-time well-posedness
for the regular solutions of the original Cauchy problem (\ref{eq:1.1})--(\ref{10000})
with (\ref{initial})--(\ref{far}) shown in Theorem \ref{th2}.
For simplicity, in this section, we denote $(\rho^\epsilon,u^\epsilon,c^\epsilon, \psi^\epsilon,h^\epsilon)$
as $(\rho,u,c,\psi,h)$, and $(\rho^\epsilon_0,u^\epsilon_0, c^\epsilon_0, \psi^\epsilon_0,h^\epsilon_0)$
as $(\rho_0,u_0,c_0,\psi_0,h_0)$, respectively.

\begin{proof} We divide the proof into two steps.

\smallskip
\paragraph{{\rm 1}. \em  Existence and uniform regularity.}
It follows from the initial assumptions (\ref{th78}) and Theorem \ref{th1}
that there exists $T_{*}> 0$ such that the Cauchy problem (\ref{eq:cccq2})--(\ref{farnew})
has a unique strong solution $(\psi, c,u)=((A\gamma)^{-\frac{\iota}{2}}\nabla c^\iota, c,u)$
with the regularity properties (\ref{reformulateregularity}) and  the uniform estimates (\ref{qiyu}).
Denote $\rho=a_3c^{\frac{2}{\gamma-1}}$ with $a_3=(A\gamma)^{-\frac{1}{\gamma-1}}$.
Then we obtain that $\psi=\nabla \rho^{\frac{\delta-1}{2}}=\nabla h$ and
\begin{equation}\label{reg2}
0<\sqrt{A\gamma}\rho^{\frac{\gamma-1}{2}}=c\in C^1([0,T_{*}]\times \mathbb{R}^3),
\qquad (u, \nabla u) \in C((0,T_{*}]\times \Omega)
\end{equation}
for any bounded smooth domain $\Omega\subset \mathbb{R}^3$.

First, multiplying $(\ref{eq:cccq})_2$ by
$
\frac{\partial \rho}{\partial c}=\frac{2a_3}{\gamma-1}c^{\frac{3-\gamma}{\gamma-1}}
$,
we obtain the continuity equation in (\ref{eq:1.1}).

Then multiplying $(\ref{eq:cccq})_3$ by $\rho=a_3c^{\frac{2}{\gamma-1}}$ leads to
the momentum equations in (\ref{eq:1.1}).

Next, we need to show that $(\rho, u)$ also satisfies
\begin{equation}\label{continuous}
\rho^{\frac{1-\delta}{2}}u_t \in L^\infty([0,T]; H^1)\cap L^2([0,T]; H^2),
\qquad  \rho^{\frac{1-\delta}{2}}u \in C([0,T]; H^3).
\end{equation}
In fact, from the momentum equations  $(\ref{eq:1.1})_2$ and the positivity: $\rho(t,x)>0$ for $(t,x)\in [0,T_*]\times \mathbb{R}^3$,
we have
\begin{equation}\label{bianxing}
\rho^{\frac{1-\delta}{2}}\big(u_t+u\cdot\nabla u +\frac{2A\gamma}{\gamma-1}\rho^{\frac{\gamma-1}{2}} \nabla\rho^{\frac{\gamma-1}{2}}\big)
 +\epsilon\rho^{\frac{\delta-1}{2}}  Lu
=\frac{2\delta\epsilon}{\delta-1}\nabla\rho^{\frac{\delta-1}{2}}\cdot  Q(u),
\end{equation}
which, along with the uniform estimates (\ref{qiyu}) and the classical Sobolev embedding theorem,
implies (\ref{continuous}).

Therefore, $(\rho,u)$ satisfies (\ref{eq:1.1})--(\ref{10000}) with (\ref{initial})--(\ref{far})
in the sense of distributions, the regularity properties in Definition \ref{d1},
and the uniform estimates in Theorem \ref{th2}.

\medskip
\paragraph{{\rm 2}. \em  Conservation laws.}
First,  we show that the solution $(\rho, u)$ obtained above still satisfies
$$
\rho  \in L^\infty([0,T_*];L^{1}(\mathbb{R}^3)) \qquad \text{if $\rho_0\in L^{1}$}.
$$

Let $f: \mathbb{R}^{+} \to \mathbb{R}^+$ be a nonincreasing $C^2$ function satisfying
\begin{align*}
f(s)=\begin{cases}
1 & s \in [0,\frac{1}{2}],\\[2pt]
e^{-s} & s \ge 1.
\end{cases}
\end{align*}
Denote $f_R(x)=f(\frac{|x|}{R})$. Using the regularity of the solution and the definition of $f$,
we have
\begin{align*}
\int \big(\rho +|\rho_t|+ |\text{div}(\rho u)|  \big)f_R(x)\, {\rm d}x
+\frac{1}{R}\int \big(\big| \rho u f' (\tfrac{|x|}{R})\big|+ \rho |u| f(\tfrac{|x|}{R})\big)\, {\rm d}x\le C(R)
\end{align*}
for any fixed $R>1$, where $C(R)>0$ is a constant depending only on $R$.

Next, since the continuity equation \eqref{eq:1.1}$_1$ holds, we can multiply  \eqref{eq:1.1}$_1$
by $ f_R(x)$ and integrate with respect to $x$ to obtain
\begin{align}\label{xccv}
\frac{\rm d}{{\rm d} t} \int \rho  f_R(x) \,{\rm d}x=-\int \text{div} (\rho u)  f_R(x)\, {\rm d}x.
\end{align}
Then it follows from integration by parts and H\"older's inequality that
\begin{align*}
-\int \text{div} (\rho u) f_R(x)\,{\rm d}x= \int \rho u \cdot \tfrac{x }{|x|R}  f' (\tfrac{|x|}{R})\,{\rm d}x
\le&\, C \int \rho |u| f(\tfrac{|x|}{R}) \tfrac{1}{R}\,{\rm d}x
\le  C  \frac{|u|_{\infty} }{R} \int \rho    f_R (x)\,{\rm d}x,
\end{align*}
which,  along with (\ref{xccv}) and Gronwall's inequality, implies
\begin{align*}
\sup_{0\leq t \leq T_*} \int \rho  f_R(x)\, {\rm d}x \le C \int \rho_0  f_R(x)\, {\rm d}x + C \le C,
\end{align*}
with $C$ independent of $R$.
Note that
$\rho  f_R(x)  \to \rho$ as $R\to \infty$ for all $x\in \mathbb{R}^3$.
Thus, by  Fatou's lemma, we have
\begin{equation}\label{rhox}
\sup_{0\leq t \leq T_*} \int \rho\,  {\rm d}x
\le
\sup_{0\leq t \leq T_*}  \liminf_{R \to \infty}\int \rho  f_R(x)\,{\rm d}x \le C,
\end{equation}
where $C$ is a constant depending only on $(\alpha, \beta, \gamma, \delta)$ and the initial data $(\rho_0, u_0)$.

Then $(\rho,u)(t,x)$ in $[0,T_*]\times \mathbb{R}^3$ obtained above has finite mass $m(t)$ and  finite  energy $E(t)$.
Indeed,
\begin{equation}\label{finite}
\displaystyle
E(t)=\int \Big( \frac{1}{2}\rho|u|^2+\frac{p}{\gamma-1}\big)(t,x)\, {\rm d}x  \leq C\big(\big|u\big|^2_2+\big|\rho\big|_1\big)<\infty.
\end{equation}

For the conservation laws, since $\rho u\in W^{1,1}(\mathbb{R}^3)$, we have
\begin{equation}\label{conmass}
\frac{\rm d}{{\rm d} t}m(t)=0.
\end{equation}
On the other hand, multiplying equation $(\ref{eq:1.1})_2$ by $u$ on both sides and  integrating over $\mathbb{R}^3$,
via integration by parts and the continuity equation, we have
\begin{equation}\label{conenergy}\begin{split}
\frac{\rm d}{{\rm d} t} E(t)+\int \rho^\delta\big(\alpha  \big|\nabla u\big|^2_2+(\alpha+\beta)\big|\text{div} u\big|^2_2\big)\, {\rm d}x=0,
\end{split}
\end{equation}
where we have used the fact that $\rho u |u|^2, up, \ \rho^\delta \nabla u \cdot u \in W^{1,1}(\mathbb{R}^3)$, and
$$
p_t+u\cdot \nabla p+ p\,\text{div} u=-(\gamma-1)p\,\text{div} u \qquad \text{in $[0,T_*]\times \mathbb{R}^3$}.
$$
This completes the proof.
\end{proof}

\section{Vanishing Viscosity Limit as $\epsilon \rightarrow 0$}

In this section, we establish the vanishing viscosity limit results
stated in Theorem \ref{th3A} and also give the proofs of Corollaries \ref{th3}--\ref{example}.

\smallskip
\subsection{Proof of Theorem \ref{th3A}}
We divide the proof into five steps.

\medskip
1. We first denote by
$$
(\psi^\epsilon, U^\epsilon)=( \psi^\epsilon, c^\epsilon,u^\epsilon)^\top
=( \nabla (\rho^{\epsilon})^{\frac{\delta-1}{2}}, \sqrt{A\gamma}(\rho^{\epsilon})^{\frac{\gamma-1}{2}},u^\epsilon)^\top
$$
as the solution of the following problem:
\begin{equation}
\begin{cases}
\label{E:1.1}
\displaystyle
 \psi^\epsilon_t+\sum_{l=1}^3 B_l(u^\epsilon) \partial_l\psi^\epsilon
  +B(u^\epsilon)\psi^\epsilon+\frac{\delta-1}{2} h^\epsilon\nabla \text{div} u^\epsilon=0,\\[3pt]
\displaystyle
 A_0 U^\epsilon_t+\sum^{3}_{j=1}A_j(U^\epsilon)\partial_j U^\epsilon
 =-\epsilon F(h^\epsilon, u^\epsilon)+\epsilon G(h^\epsilon,\psi^\epsilon, u^\epsilon),\\[3pt]
 (\psi^\epsilon, c^\epsilon,u^\epsilon)|_{t=0}
 =( \nabla (\rho^{\epsilon}_0)^{\frac{\delta-1}{2}}, \sqrt{A\gamma}(\rho^{\epsilon}_0)^{\frac{\gamma-1}{2}},u^\epsilon_0)(x)
  \qquad \mbox{for $x\in\mathbb{R}^3$},
\end{cases}
\end{equation}
where $h^\epsilon=(\rho^\epsilon)^{\frac{\delta-1}{2}}=(A\gamma)^{-\frac{\iota}{2}}(c^\epsilon)^\iota$.
From Theorem \ref{th1}, there exists $T_1>0$ independent of $\epsilon$
such that $(\psi^\epsilon,  U^\epsilon)$ of \eqref{E:1.1} satisfies
\begin{equation}\label{jkka}
  \sup_{0\leq t \leq T_1}\Big(\big\|(c^\epsilon,u^\epsilon)(t,\cdot)\big\|_3^2
   +\epsilon \big\|\psi^\epsilon(t,\cdot)\big\|^2_{D^1\cap D^2}\Big)
   +\epsilon \int_0^{T_1}\sum_{i=1}^4
    \big|h^\epsilon\nabla^i u^\epsilon(t,\cdot)\big|_2^2\,\text{d}t \leq C_0
\end{equation}
for some positive constant $C_0=C_0(\alpha, \beta, A, \gamma, \delta, c_0, \psi_0,u_0)$ that is independent of $\epsilon$.

\medskip
2. Notice that the initial data $(\rho^\epsilon_0,u^\epsilon_0)$ satisfy \eqref{th78}--\eqref{th78--1}
and that there exist a vector function  $(\rho_0(x),u_0(x))$ defined in $\mathbb{R}^3$  such that
$$
\lim_{\epsilon\rightarrow 0}\big|(c^\epsilon_0-c_0,u^\epsilon_0-u_0)\big|_{2}=0.
$$
Then
$
(c_0,u_0)\in H^3(\mathbb{R}^3),
$
due to the lower semi-continuity of the weak convergence.

Based on \cite{tms1}, regarding $(\rho_0,u_0)$ as the initial data,
we denote by
$$
U=(c,u)=(\sqrt{A\gamma}\rho^{\frac{\gamma-1}{2}},u)
$$
as the regular solution of the Cauchy problem \eqref{eq:1.1E} with
\eqref{winitial},
which can be written into the following symmetric system:
\begin{equation}
\label{E:4.2}\begin{cases}
\displaystyle
A_0U_t+\sum_{j=1}^3A_j(U) \partial_j U=0,\\[3pt]
\displaystyle
U(0,x)=U_0=(\sqrt{A\gamma}\rho^{\frac{\gamma-1}{2}}_0,u_0),
\end{cases}
\end{equation}

From \cite{tms1} (see Theorem \ref{thmakio}), we know that there exits $T_{2}$ such that
there is a unique regular solution $U$ of problem \eqref{E:4.2} satisfying
\begin{equation}\label{jkkab}
\sup_{0\leq t \leq T_2}\| U(t,\cdot)\|^2_{3}  \leq C
\end{equation}
for some positive constant $C=C(A, \gamma, c_0, u_0)$.

\smallskip
3. Denote $T_*=\min\{T_1,T_2\}>0$.
Then, for any bounded smooth domain $\Omega$, due to Lemma \ref{localupperbound11} and
the Aubin-Lions lemma  (see \cite{jm}) ({\it i.e.}  Lemma \ref{aubin}),
there exist a subsequence (still denoted by) $(c^{ \epsilon},  u^{ \epsilon})$
and a limit vector function $(c^*,u^*)$ satisfying
\begin{equation}\label{ertfinalq}
(c^{ \epsilon},  u^{ \epsilon})\rightarrow (c^*,u^*) \qquad \text{in $C([0,T_*];H^2(\Omega))$}.
\end{equation}

Note that estimates  (\ref{jkk}) are independent of $\epsilon$. Then
there exists a subsequence (still denoted by) $(c^{ \epsilon},  u^{\epsilon})$ converging
to the  limit function $(c^*,u^*)$ in the weak or weak* sense:
\begin{equation}\label{ruojixianass}
(c^{\epsilon},  u^{ \epsilon})\rightharpoonup  (c^*,u^*) \qquad\,\, \text{weakly* in $L^\infty([0,T_*];H^3)$}.
\end{equation}
From the lower semi-continuity of the norms in the weak convergence,
$(c^*,u^*)$ also satisfies the corresponding  estimates (\ref{jkk}), except those on $h$ and $\psi$.

\medskip
4. Now we show that $(c^*,u^*)$ is a weak solution  of (\ref{E:4.2}) in the sense of distributions.
First, multiplying the equations in  $(\ref{E:1.1})$ for the fluid velocity $u^\epsilon$ by a test function
$w(t,x)=(w^1,w^2,w^3)\in C^\infty_c ([0,T^*)\times \mathbb{R}^3)$ on both sides
and integrating over $[0,T^*]\times \mathbb{R}^3$ yield
\begin{equation}\label{zhenzheng1q}
\begin{split}
&\int_0^{T^*}\int\Big(u^{\epsilon}\cdot w_t -(u^{\epsilon} \cdot \nabla) u^{\epsilon} \cdot w
   +\frac{1}{\gamma-1}\big(c^{\epsilon}\big)^2
   \text{div} w \Big)\text{d}x\text{d}t+\int u^\epsilon_0(x) \cdot w(0,x)\,\dd x\\
&=\int_0^{T^*}\int \epsilon \Big( (h^{\epsilon})^2 Lu^{\epsilon}
  -\frac{2\delta}{\delta-1}h^{\epsilon}\psi^{\epsilon} \cdot Q(u^{ \epsilon}) \Big)\cdot w
  \,\text{d}x\text{d}t.
\end{split}
\end{equation}
Combining the uniform estimates obtained above, the strong convergence in (\ref{ertfinalq}),
the weak convergence in (\ref{ruojixianass}), and \eqref{initialrelation1},
and  letting $\epsilon \rightarrow 0$ in (\ref{zhenzheng1q}), we have
\begin{equation*}
\int_0^{T^*} \int  \Big(u^* \cdot w_t - (u^* \cdot \nabla) u^* \cdot w
+\frac{1}{\gamma-1}(c^*)^2\text{div} w \Big)\text{d}x\text{d}t
+\int u_0(x) \cdot w(0,x)\,\dd x=0,
\end{equation*}
where we have used Lemma \ref{localupperbound11} and
\begin{equation*}
\begin{split}
&\int_0^{T^*} \int \epsilon \Big( (h^{\epsilon})^2 Lu^{\epsilon}
   -\frac{2\delta}{\delta-1}h^{\epsilon}\psi^{\epsilon} \cdot Q(u^{ \epsilon}) \Big)\cdot w\, \text{d}x\text{d}t\\[2mm]
&\leq C\epsilon \big(\big\|h^{\epsilon}\big\|^2_{L^\infty(\supp\,w)} \big|\nabla^2 u^\epsilon\big|_2
  +\big\|h^{\epsilon}\big\|_{L^\infty(\supp\,w)}\big|\psi^\epsilon\big|_\infty
   \big|\nabla u^{\epsilon}\big|_2\big)\big|w\big|_2 \rightarrow 0
  \qquad \text{as $\epsilon \rightarrow 0$}.
\end{split}
\end{equation*}
Similarly, we can use the same argument to show that $(c^*,u^*)$ also satisfies the equation in  $(\ref{E:4.2})$
for $c=\sqrt{A\gamma}\rho^{\frac{\gamma-1}{2}}$ and the initial condition in the sense of distributions.
Thus, $(c^*,u^*)$ is a weak solution of the Cauchy problem (\ref{E:4.2}) in the sense of distributions
satisfying the  following regularity:
\begin{equation}\label{zheng-b}
(c^*, u^*)\in L^\infty([0,T_*]; H^3).
\end{equation}

5. Finally, the uniqueness obtained in \cite{tms1} yields
that the whole family $(c^\epsilon,u^\epsilon)$ converges to $(c,u)=(c^*,u^*)$ in the sense of distributions
or the strong convergence shown in (\ref{ertfinalq}).

This completes the proof of Theorem \ref{th3A}.

\medskip
The proof of Corollary \ref{th3} is similar to that of Theorem \ref{th3A}. Here we omit its details.

\subsection{Proof of Corollary \ref{example}}
We divide the proof into three steps.

\medskip
\paragraph{{\rm 1}. \em  Some auxiliary functions.}
First, let  $(\rho_0,u_0)$ satisfy
\begin{equation}\label{invisidinitial}
\rho_0\geq 0, \quad\,\, \rho^{\frac{\gamma-1}{2}}_0\in H^3,\quad\,\, u_0\in H^3.
\end{equation}

Next, denote
$$
f(x)=\frac{1}{1+|x|^{2a}}
$$
for some constant  $a$ satisfying
\begin{equation}\label{range} \frac{3}{2(\gamma-1)}< a<\frac{1}{2(1-\delta)}.\end{equation}
It is easy to check that $f(x)$ satisfies
\begin{equation}\label{fcondition}
f>0, \quad\,\, f^{\frac{\gamma-1}{2}}\in H^3, \quad\,\,  \nabla f^{\frac{\delta-1}{2}}\in D^1\cap D^2,
\quad\,\, \nabla f^{\frac{\delta-1}{4}}\in L^4.
\end{equation}

Finally, let  $\chi(x)\in C^\infty_c(\mathbb{R}^3)$ be a  truncation function  satisfying (\ref{eq:2.6-77A})
and denote $\chi_{\epsilon^q}(x)=\chi(\epsilon^q x)$ for $x\in \mathbb{R}^3$.
Then we can define the functions $(\rho^\epsilon_0,u^\epsilon_0)$ by
$$
\big(\rho^\epsilon_0\big)^{\frac{\gamma-1}{2}}=\rho^{\frac{\gamma-1}{2}}_0\chi_{\epsilon^q}+\epsilon^r  f^{\frac{\gamma-1}{2}},
\qquad  u^\epsilon_0=u_0,
$$
where  $q$ and $r$ are both positive constants to be determined later.

\smallskip
\paragraph{{\rm 2}. \em  The uniform bound of $\big\|(\rho^\epsilon_0)^{\frac{\gamma-1}{2}}\big\|_3$ and the strong convergence.}
We use $C>0$ to denote a constant depending only on $(\rho_0,u_0, f)$, $\delta$,
and  $\gamma$ in the rest of this section.

First, we have the following formula:
\begin{equation}\label{expansion1}
\begin{split}
&\partial^\zeta_x( \rho^{\frac{\gamma-1}{2}}_0\chi_{\epsilon^q})- \rho^{\frac{\gamma-1}{2}}_0\partial^\zeta_x\chi_{\epsilon^q}
  -\partial^\zeta_x \rho^{\frac{\gamma-1}{2}}_0 \cdot \chi_{\epsilon^q}\\
&=\sum_{1\le i,j,k\le 3}l_{ijk}\Big(C_{1ijk} \partial^{\zeta^i}_x \rho^{\frac{\gamma-1}{2}}_0\cdot  \partial^{\zeta^j+\zeta^k}_x \chi_{\epsilon^q}
  +C_{2ijk}  \partial^{\zeta^j+\zeta^k}_x \rho^{\frac{\gamma-1}{2}}_0 \cdot \partial^{\zeta^i}_x \chi_{\epsilon^q}\Big),
\end{split}
\end{equation}
where $\zeta=\zeta^1+\zeta^2+\zeta^3$ for three multi-indexes $\zeta^i\in \mathbb{R}^3$, $i=1,2,3$,
satisfying $|\zeta^i|=0$ or $1$, and $C_{1ijk}$ and $C_{2ijk}$  are all constants.

Then it is direct to show
\begin{equation}\label{expansion2}
\begin{split}
\big|\nabla^k \chi_{\epsilon^q}\big|_\infty\leq C \qquad &\text{for $k=0,1,2,3$},\\
\big|\nabla^k \chi_{\epsilon^q}\big|_\infty \rightarrow 0 \quad \text{as $\epsilon \rightarrow 0$} \qquad &\text{for $k=1,2,3$},\\
\int_{ |x|\geq \frac{1}{\epsilon^q}} \big|\nabla^k\rho^{\frac{\gamma-1}{2}}_0\big|^2\text{d}x\rightarrow 0\quad \text{as $\epsilon \rightarrow 0$}
  \qquad &\text{for $k=0,1,2,3$},
  \end{split}
\end{equation}
which, together with (\ref{expansion1}), implies that $\big\|(\rho^\epsilon_0)^{\frac{\gamma-1}{2}}\big\|_3\leq C$
and
$$
\big\|(\rho^\epsilon_0)^{\frac{\gamma-1}{2}}-\rho^{\frac{\gamma-1}{2}}_0\big\|_3\rightarrow 0
\qquad \text{as $\epsilon \rightarrow 0$}.
$$

\smallskip
\paragraph{{\rm 3}. \em  The uniform bound
of $\epsilon^{\frac{1}{2}} \big\|\psi^\epsilon_0\big\|_{D^1\cap D^2}+\epsilon^{\frac{1}{4}}\big|n^\epsilon_0\big|_{4}$
with $n^\epsilon_0=\nabla \big(\rho^\epsilon_0\big)^{\frac{\delta-1}{4}}$.}
From the definition of $(\rho^\epsilon_0)^{\frac{\gamma-1}{2}}$, we have
\begin{equation}\label{gongshi}
\begin{split}
\psi^\epsilon_0=&\nabla \big(\rho^\epsilon_0\big)^{\frac{\delta-1}{2}}
  =\iota\big(\rho^{\frac{\gamma-1}{2}}_0\chi_{\epsilon^q}
   +\epsilon^r  f^{\frac{\gamma-1}{2}}\big)^{\iota-1}\big(\nabla (\rho^{\frac{\gamma-1}{2}}_0\chi_{\epsilon^q})
    +\epsilon^r \nabla f^{\frac{\gamma-1}{2}}\big),\\
n^\epsilon_0=&\nabla \big(\rho^\epsilon_0\big)^{\frac{\delta-1}{4}}
 =\frac{\iota}{2}\big(\rho^{\frac{\gamma-1}{2}}_0\chi_{\epsilon^q}+\epsilon^r  f^{\frac{\gamma-1}{2}}\big)^{\frac{\iota}{2}-1}\big(\nabla (\rho^{\frac{\gamma-1}{2}}_0\chi_{\epsilon^q})
     +\epsilon^r \nabla f^{\frac{\gamma-1}{2}}\big).
\end{split}
\end{equation}
Denote $B_{\epsilon,q}=B_{\frac{2}{\epsilon^q}}$ and  $B^C_{\epsilon,q}=B^C_{\frac{2}{\epsilon^q}}=\mathbb{R}^3/B_{\frac{2}{\epsilon^q}}$
in the rest of this section for simplicity, and
\begin{equation}\label{division}\begin{split}
&\big\|\psi^\epsilon_0\big\|_{D^1\cap D^2}
=\big\|\psi^\epsilon_0\big\|_{D^1\cap D^2(B_{\epsilon,q})}+\big\|\psi^\epsilon_0\big\|_{D^1\cap D^2(B^C_{\epsilon,q})};\\
&\big|n^\epsilon_0\big|_{4}=\big\|n^\epsilon_0\big\|_{L^4(B_{\epsilon,q})}+\big\|n^\epsilon_0\big\|_{L^4(B^C_{\epsilon,q})}.
\end{split}
\end{equation}
Notice that
\begin{equation}\label{flocal}
f(x)\geq \frac{\epsilon^{2aq}}{\epsilon^{2aq}+2^{2a}}\qquad\,\, \text{for $x\in B_{\epsilon,q}$}.
\end{equation}
Then it follows that
\begin{equation}\label{cal1}\begin{split}
\epsilon^{\frac{1}{4}}\big|n^\epsilon_0\big|_{4}
 =&\,\epsilon^{\frac{1}{4}}\big\|n^\epsilon_0\big\|_{L^4(B_{\epsilon,q})}
   +\epsilon^{\frac{1}{4}}\big\|n^\epsilon_0\big\|_{L^4(B^C_{\epsilon,q})}\\
\leq &\, C\epsilon^{\frac{1}{4}}\Big\|
 \Big(\epsilon^r  \big( \frac{\epsilon^{2aq}}{\epsilon^{2aq}+2^{2a}}\big)^{\frac{\gamma-1}{2}}\Big)^{\frac{\iota}{2}-1}
  \big(\nabla (\rho^{\frac{\gamma-1}{2}}_0\chi_{\epsilon^q} )+\epsilon^r \nabla f^{\frac{\gamma-1}{2}}\big)\Big\|_{L^4(B_{\epsilon,q})   }\\
&\,+C\epsilon^{\frac{1}{4}}
  \Big\|\big(\epsilon^r  f^{\frac{\gamma-1}{2}}\big)^{\frac{\iota}{2}-1}\epsilon^r \nabla f^{\frac{\gamma-1}{2}}\Big\|_{L^4(B^C_{\epsilon,q})}\\
\leq &\, C\epsilon^{\frac{1}{4}-(r+aq(\gamma-1))(1-\frac{\iota}{2})}\leq C,
\end{split}
\end{equation}
under the condition:
\begin{equation}\label{cond1}
0<r+aq(\gamma-1)<\frac{1}{4-2\iota}.
\end{equation}

Next, for $\psi^\epsilon_0$,  we have
\begin{equation}\label{cal2}\begin{split}
\nabla \psi^\epsilon_0=&\, \iota\big(\rho^{\frac{\gamma-1}{2}}_0\chi_{\epsilon^q}+\epsilon^r  f^{\frac{\gamma-1}{2}}\big)^{\iota-1}
   \big(\nabla^2(\rho^{\frac{\gamma-1}{2}}_0\chi_{\epsilon^q} )+\epsilon^r \nabla^2 f^{\frac{\gamma-1}{2}}\big)\\
&+ \iota(\iota-1)\big(\rho^{\frac{\gamma-1}{2}}_0\chi_{\epsilon^q}+\epsilon^r  f^{\frac{\gamma-1}{2}}\big)^{\iota-2}
   \big(\nabla\big(\rho^{\frac{\gamma-1}{2}}_0\chi_{\epsilon^q} \big)+\epsilon^r \nabla f^{\frac{\gamma-1}{2}}\big)^2,\\
\nabla^2 \psi^\epsilon_0=&\, \iota\big(\rho^{\frac{\gamma-1}{2}}_0\chi_{\epsilon^q}+\epsilon^r  f^{\frac{\gamma-1}{2}}\big)^{\iota-1}\big(\nabla^3(\rho^{\frac{\gamma-1}{2}}_0\chi_{\epsilon^q})
    +\epsilon^r \nabla^3 f^{\frac{\gamma-1}{2}}\big)\\
&+ 3\iota(\iota-1)\big(\rho^{\frac{\gamma-1}{2}}_0\chi_{\epsilon^q}+\epsilon^r  f^{\frac{\gamma-1}{2}}\big)^{\iota-2}\\
&\quad \times\big(\nabla (\rho^{\frac{\gamma-1}{2}}_0\chi_{\epsilon^q} )
   +\epsilon^r \nabla f^{\frac{\gamma-1}{2}}\big)
    \cdot \big(\nabla^2(\rho^{\frac{\gamma-1}{2}}_0\chi_{\epsilon^q})+\epsilon^r \nabla^2 f^{\frac{\gamma-1}{2}}\big)\\
&+ \iota(\iota-1)(\iota-2)\big(\rho^{\frac{\gamma-1}{2}}_0\chi_{\epsilon^q}+\epsilon^r  f^{\frac{\gamma-1}{2}}\big)^{\iota-3}
  \big(\nabla (\rho^{\frac{\gamma-1}{2}}_0\chi_{\epsilon^q})  +\epsilon^r \nabla f^{\frac{\gamma-1}{2}}\big)^3.
\end{split}
\end{equation}
Notice that
\begin{equation*}
\begin{split}
( f^{\frac{\gamma-1}{2}})^{\iota-1} \nabla^2 f^{\frac{\gamma-1}{2}}
=&\,\frac{1}{\iota}\Big(\nabla^2( f^{\frac{\gamma-1}{2}})^\iota-\frac{4(\iota-1)}{\iota}(\nabla f^{\frac{\delta-1}{4}})^2\Big),\\
 (f^{\frac{\gamma-1}{2}})^{\iota-1}\nabla^3 f^{\frac{\gamma-1}{2}}
=&\,\frac{1}{\iota}\Big(\nabla^3( f^{\frac{\gamma-1}{2}})^\iota-3\iota(\iota-1)(f^{\frac{\gamma-1}{2}})^{\iota-2}
     \nabla f^{\frac{\gamma-1}{2}} \nabla^2 f^{\frac{\gamma-1}{2}}\\
 &\quad\,\,\, -\iota(\iota-1)(\iota-2) (f^{\frac{\gamma-1}{2}})^{\iota-3}(\nabla f^{\frac{\gamma-1}{2}})^3\Big),
 \end{split}
\end{equation*}
which,  together with  (\ref{cal2}) and the similar argument used in the derivation of (\ref{cal1}), implies
\begin{equation*}
\begin{split}
\epsilon^{\frac{1}{2}} \big\|\psi^\epsilon_0\big\|_{D^1\cap D^2}
\leq C\epsilon^{\frac{1}{2}-(r+aq(\gamma-1))(3-\iota)}\leq C,
 \end{split}
\end{equation*}
under the condition:
\begin{equation}\label{cond2}
0<r+aq(\gamma-1)<\frac{1}{6-2\iota}.
\end{equation}

Thus, following (\ref{cond1}) and (\ref{cond2}), we can just choose
$$
r_0=\frac{1}{6(3-\iota)},\qquad\, q_0=\frac{1}{6a(\gamma-1)(3-\iota)},
$$
and $ (\rho^\epsilon_0,u^\epsilon_0)$ can be given as
$$
\big(\rho^\epsilon_0\big)^{\frac{\gamma-1}{2}}=\rho^{\frac{\gamma-1}{2}}_0\chi_{\epsilon^{q_0}}
+\epsilon^{r_0}  \Big(\frac{1}{1+|x|^{2a}}\Big)^{\frac{\gamma-1}{2}},
\qquad  u^\epsilon_0=u_0,
$$
where $a$ satisfies (\ref{range}).

This completes the proof of Corollary \ref{example}.

\medskip
\section{Nonexistence of Global Solutions with $L^\infty$ Decay}

This section is devoted to the proof of Theorem \ref{th:2.20}. Denote the total kinetic energy as
$$
E_{\rm k}(t)= \frac{1}{2}\int (\rho |u|^2)(t,x)\,\dd x.
$$
For simplicity, in this section, we denote $(\rho^\epsilon,u^\epsilon)$ as $(\rho,u)$,
and $(\rho^\epsilon_0,u^\epsilon_0)$ as $(\rho_0,u_0)$.

First, the conservation of momentum can be verified.
\begin{lemma}\label{lemmak}
Let \eqref{canshu}  hold and $\epsilon\geq 0$, and let $(\rho,u)$ be the regular solution obtained
in Theorems {\rm\ref{thmakio}} and  {\rm\ref{th2}}.
Then
$$
\mathbb{P}(t)=\mathbb{P}(0) \qquad  \text{for $t\in [0,T]$}.
$$
\end{lemma}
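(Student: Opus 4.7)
The plan is to establish momentum conservation by testing the momentum equation against a spatial cutoff, integrating in time, and passing to the limit. The assumptions $0<m(0)<\infty$ together with the mass and energy conservation laws $m(t)=m(0)$ and $E(t)\le E(0)$ established in Theorem \ref{th2} (and their Euler analogues for Theorem \ref{thmakio}) give $\rho u\in L^{\infty}([0,T];L^{1}(\mathbb{R}^{3}))$ via the Cauchy--Schwarz bound $|\rho u|_1\le m(t)^{1/2}(2E(t))^{1/2}$, so $\mathbb{P}(t)$ is finite for every $t\in[0,T]$.

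Next, I would introduce a smooth cutoff $\chi\in C^{\infty}_c(\mathbb{R}^{3})$ with $\chi\equiv 1$ on $B_1$ and $\chi\equiv 0$ outside $B_2$, and set $\chi_R(x):=\chi(x/R)$. Testing the $i$-th component of $\eqref{eq:1.1}_2$ against $\chi_R$ and integrating over $[0,t]\times\mathbb{R}^{3}$ (the regularity in Definition \ref{d1} together with the $L^{1}$ control just established justifies each integration by parts) yields
\begin{equation*}
\int \rho u^{i}\chi_R\,{\rm d}x\Big|_0^{t}=\int_0^{t}\!\!\int \bigl(\rho u^{i}u^{j}\partial_j\chi_R+p(\rho)\partial_i\chi_R-\epsilon\,\mathbb{T}_{ij}\partial_j\chi_R\bigr)\,{\rm d}x\,{\rm d}s.
\end{equation*}
Since $|\nabla\chi_R|\le C/R$ is supported in the annulus $A_R:=\{R\le|x|\le 2R\}$, the convective and pressure contributions are bounded respectively by $\tfrac{C}{R}\int_0^{t}\!\!\int\rho|u|^{2}\,{\rm d}x\,{\rm d}s$ and $\tfrac{C}{R}\int_0^{t}\!\!\int p(\rho)\,{\rm d}x\,{\rm d}s$, both of size $O(T E(0)/R)$ by the uniform energy bound, and hence vanish as $R\to\infty$.

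The main obstacle is the viscous term when $\epsilon>0$, because $\mu(\rho)=\alpha\rho^{\delta}$ with $\delta<1$ fails to be integrable at infinity even though $\rho\to 0$ there. I would handle it by the weighted splitting $\rho^{\delta}|\nabla u|=\rho^{(\delta+1)/2}\cdot\rho^{(\delta-1)/2}|\nabla u|$ followed by Cauchy--Schwarz,
\begin{equation*}
\int_{A_R}\rho^{\delta}|\nabla u|\,{\rm d}x\le\Bigl(\int_{A_R}\rho^{\delta+1}\,{\rm d}x\Bigr)^{1/2}\bigl|\rho^{(\delta-1)/2}\nabla u\bigr|_{2}.
\end{equation*}
The embedding $c\in H^{3}\hookrightarrow L^{\infty}$ gives $\rho\in L^{\infty}$, so $\rho^{\delta+1}\le|\rho|_{\infty}^{\delta}\rho$ and $\int_{A_R}\rho^{\delta+1}\,{\rm d}x\le |\rho|_{\infty}^{\delta}m(0)$ uniformly in $R$. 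Combining this with the weighted-gradient estimate $\epsilon\int_0^{T_*}|\rho^{(\delta-1)/2}\nabla u|_{2}^{2}\,{\rm d}s\le C_0$ from Theorem \ref{th2} and one more Cauchy--Schwarz in time bounds the integrated viscous contribution by $C\epsilon^{1/2}t^{1/2}|\rho|_{\infty}^{\delta/2}m(0)^{1/2}C_{0}^{1/2}/R$, which tends to $0$ as $R\to\infty$.

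Finally, on the left-hand side dominated convergence (dominated by $|\rho u|\in L^{1}$) gives $\int\rho u\,\chi_R\,{\rm d}x\to\mathbb{P}(s)$ for each $s\in\{0,t\}$, so passing $R\to\infty$ in the cutoff identity yields $\mathbb{P}(t)=\mathbb{P}(0)$ for all $t\in[0,T]$. The Euler case $\epsilon=0$ is simpler, since the viscous contribution is absent and only the first two estimates, together with the regularity provided by Theorem \ref{thmakio}, are needed.
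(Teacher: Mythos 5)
Your proof is correct and rests on the same idea as the paper's: integrate the momentum equation in $x$ and observe that all three flux terms are pure divergences whose integrals vanish by integrability at spatial infinity. The paper disposes of this in one line by asserting $(\rho u\otimes u,\,\rho^\gamma,\,\rho^\delta\nabla u)\in W^{1,1}(\mathbb{R}^3)$, whereas you run the cutoff argument explicitly; the one place where this is a genuine improvement is the viscous flux, since $\rho^\delta\nabla u\in L^1$ is not obvious for $\delta<1$, and your splitting $\rho^\delta|\nabla u|=\rho^{(\delta+1)/2}\cdot\rho^{(\delta-1)/2}|\nabla u|$ combined with $\rho\in L^1\cap L^\infty$ and the weighted bound $\epsilon\int_0^{T_*}|\rho^{(\delta-1)/2}\nabla u|_2^2\,{\rm d}t\le C_0$ from Theorem \ref{th2} supplies exactly the uniform annulus estimate that the $1/R$ decay of $\nabla\chi_R$ needs. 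The only caveat is that the hypothesis $m(0)<\infty$ (hence $\rho u\in L^\infty([0,T];L^1)$ via Cauchy--Schwarz) is not stated in the lemma itself but is inherited from Theorem \ref{th:2.20}, which you correctly invoke.
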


\begin{proof}
The momentum equations imply
\begin{equation}\label{deng1}
\mathbb{P}_t=-\int \text{div}(\rho u \otimes u)\,\dd x -\int \nabla p\,\dd x
+\int \text{div}\mathbb{T}\,\dd x=0,
\end{equation}
where we have used the fact that
$$
(\rho u\otimes u, \rho^\gamma, \rho^\delta \nabla u) \in W^{1,1}(\mathbb{R}^3).
$$
\end{proof}

Now we are ready to prove Theorem \ref{th:2.20}.
Let $T>0$ be any constant.
It  follows from the definitions of $m(t)$, $\mathbb{P}(t)$, and $E_{\rm k}(t)$ that
$$
 |\mathbb{P}(t)|\leq \int \rho(t,x)|u|(t,x)\,\dd x \leq  \sqrt{2m(t)E_{\rm k}(t)},
$$
which, together with \eqref{conmass} and Lemma \ref{lemmak}, implies
$$
0<\frac{|\mathbb{P}(0)|^2}{2m(0)}\leq E_{\rm k}(t)\leq \frac{1}{2} m(0)|u(t)|^2_\infty
\qquad\,\, \text{for $t\in [0,T]$}.
$$
Then there exists a positive constant $C_u$ such that
$$
|u(t)|_\infty\geq C_u  \qquad \text{for $t\in [0,T]$}.
$$
Thus, we have obtained the desired conclusion as shown in Theorem \ref{th:2.20}.

\begin{remark}\label{zhunbei3}
In the sense of the three  fundamental  conservation laws in fluid dynamics,
the definition of regular solutions with vacuum is consistent with the physical background
of the compressible Navier-Stokes equations.
This is the reason why we say in Remark {\rm \ref{r1}} that the regular solutions
defined above select the fluid velocity in a physically reasonable way.\end{remark}

\appendix
\section{Some Basic Lemmas}

In this appendix, for self-containedness, we list  some basic lemmas that are frequently used in our proof.
The first is the  well-known Gagliardo-Nirenberg inequality.

\begin{lemma}[\cite{oar}]\label{lem2as}\
For $p\in [2,6]$, $q\in (1,\infty)$, and $r\in (3,\infty)$,
there exists a generic constant $C> 0$ that may depend on $q$ and $r$ such that,
for any $f\in H^1(\mathbb{R}^3)$ and $g\in L^q(\mathbb{R}^3)\cap D^{1,r}(\mathbb{R}^3)$,
\begin{equation}\label{33}
|f|^p_p \leq \, C |f|^{\frac{6-p}{2}}_2 |\nabla f|^{\frac{3p-6}{2}}_2,\quad
|g|_\infty\leq\, C |g|^{\frac{q(r-3)}{3r+q(r-3)}}_q |\nabla g|^{\frac{3r}{3r+q(r-3)}}_r.
\end{equation}
\end{lemma}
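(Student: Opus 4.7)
My plan is to prove each of the two inequalities by a standard interpolation argument combined with a classical embedding theorem, treating them separately since they involve different borderline cases.

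For the first inequality, I would rely on the Sobolev embedding $\dot H^1(\mathbb{R}^3) \hookrightarrow L^6(\mathbb{R}^3)$, which yields $|f|_6 \le C|\nabla f|_2$ for $f\in H^1(\mathbb{R}^3)$. Since $p\in[2,6]$, I can write $\frac{1}{p}=\frac{\theta}{2}+\frac{1-\theta}{6}$, solve to obtain $\theta=(6-p)/(2p)$, and apply H\"older's inequality in logarithmic form to get
\[
|f|_p \le |f|_2^{\theta}\,|f|_6^{1-\theta}.
\]
Raising to the $p$-th power and substituting $p(1-\theta)=(3p-6)/2$ and $p\theta=(6-p)/2$, then inserting the Sobolev bound on $|f|_6$, produces the claimed estimate. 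The endpoint cases $p=2$ and $p=6$ are trivial or direct from Sobolev embedding.

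For the second inequality, I would use a truncated Morrey-type argument combined with optimization in a scale parameter $R>0$. Since $r>3$, Morrey's inequality on the ball $B_R(x)$ gives
\[
\bigl|g(x)-\bar g_{B_R(x)}\bigr| \le C R^{1-3/r}\,|\nabla g|_{L^r(B_R(x))} \le C R^{1-3/r}\,|\nabla g|_r,
\]
while H\"older's inequality estimates the average by $|\bar g_{B_R(x)}|\le C R^{-3/q}\,|g|_q$. Combining these,
\[
|g(x)|\le C R^{1-3/r}\,|\nabla g|_r + C R^{-3/q}\,|g|_q.
\]
Then I would optimize by balancing the two terms, namely choosing $R$ so that $R^{1-3/r+3/q}=|g|_q/|\nabla g|_r$. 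A direct computation of the resulting exponents then yields exactly $\frac{q(r-3)}{3r+q(r-3)}$ on $|g|_q$ and $\frac{3r}{3r+q(r-3)}$ on $|\nabla g|_r$, matching the scaling required by the dimension analysis $n=3$.

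The only mildly delicate point is the Morrey step: one must justify choosing a continuous representative of $g$ (guaranteed here because $g\in D^{1,r}$ with $r>3$ implies H\"older continuity locally) and ensure that the pointwise estimate holds for \emph{every} $x\in\mathbb{R}^3$ uniformly in $R$, so that the optimized $R=R(x)$ gives a valid pointwise bound. Apart from this, both inequalities are obtained by routine manipulations. Since Lemma~\ref{lem2as} is a classical Gagliardo–Nirenberg inequality cited from \cite{oar}, I would cite that reference and include the above sketch only for completeness.
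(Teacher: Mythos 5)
Your proposal is correct. The paper itself offers no proof of Lemma \ref{lem2as} --- it is stated as a classical fact with only the citation to \cite{oar} --- so there is no argument in the text to compare against; your derivation supplies the standard one. Both halves check out: for the first inequality the interpolation exponent $\theta=(6-p)/(2p)$ gives $p\theta=(6-p)/2$ and $p(1-\theta)=(3p-6)/2$, and the Sobolev bound $|f|_6\le C|\nabla f|_2$ closes the estimate; for the second, with $\sigma=1-3/r+3/q=\bigl(3r+q(r-3)\bigr)/(qr)>0$ the balancing choice of $R$ yields $3/(q\sigma)=3r/\bigl(3r+q(r-3)\bigr)$ on $|\nabla g|_r$ and the complementary exponent on $|g|_q$, exactly as claimed. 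The degenerate cases ($|\nabla g|_r=0$ forces $g\equiv 0$ since $q<\infty$) and the choice of continuous representative are handled as you indicate, so the argument is complete.
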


Some special  cases of the inequalities are
\begin{equation}\label{ine}\begin{split}
|u|_6\leq C|u|_{D^1},\quad\,\, |u|_{\infty}\leq C|u|^{\frac{1}{2}}_6|\nabla u|^{\frac{1}{2}}_{6},
\quad\,\, |u|_{\infty}\leq C\|u\|_{W^{1,r}}.
\end{split}
\end{equation}

The second lemma is some compactness results obtained via the Aubin-Lions lemma.

\begin{lemma}[\cite{jm}]\label{aubin}
Let $X_0\subset X\subset X_1$ be three Banach spaces.
Suppose that $X_0$ is compactly embedded in $X$, and $X$ is continuously embedded in $X_1$.
Then the following statements hold{\rm :}
\begin{enumerate}
\item[(i)] If $J$ is bounded in $L^p([0,T];X_0)$ for $1\leq p < \infty$,
and $\frac{\partial J}{\partial t}$ is bounded in $L^1([0,T];X_1)$,
then $J$ is relatively compact in $L^p([0,T];X)$.

\smallskip
\item[(ii)] If $J$ is bounded in $L^\infty([0,T];X_0)$  and $\frac{\partial J}{\partial t}$ is bounded in $L^p([0,T];X_1)$ for $p>1$,
then $J$ is relatively compact in $C([0,T];X)$.
\end{enumerate}
\end{lemma}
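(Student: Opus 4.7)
The approach I would take is the classical Aubin--Lions--Simon strategy, which reduces each statement to the combination of (a) an Ehrling-type interpolation inequality that bounds the intermediate norm $\|\cdot\|_X$ by a small multiple of $\|\cdot\|_{X_0}$ plus a large multiple of $\|\cdot\|_{X_1}$, and (b) a time-regularity estimate supplied by the bound on $\partial_t J$. Part (i) is then closed by the Riesz--Fr\'echet--Kolmogorov compactness criterion in the Bochner space $L^p([0,T];X)$, while part (ii) is closed by the Arzel\`a--Ascoli theorem in $C([0,T];X)$. I would first establish the Ehrling inequality: for every $\eta>0$ there exists $C_\eta>0$ such that $\|v\|_X\le\eta\|v\|_{X_0}+C_\eta\|v\|_{X_1}$ for all $v\in X_0$. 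Arguing by contradiction, one would otherwise produce $v_n\in X_0$ with $\|v_n\|_{X_0}=1$ and $\|v_n\|_X>\eta_0+n\|v_n\|_{X_1}$; by compactness of $X_0\hookrightarrow X$ a subsequence converges in $X$ to some $v$, while $\|v_n\|_{X_1}\to 0$ and $X\hookrightarrow X_1$ force $v=0$, contradicting $\|v_n\|_X\ge\eta_0$.

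For part (i), the uniform $L^p([0,T];X)$-bound of $\{J_n\}$ is immediate from $X_0\hookrightarrow X$, so it suffices to verify vanishing equicontinuity of translates. Applying Ehrling pointwise in $t$ and taking $L^p_t$-norms gives
\[
\|J_n(\cdot+h)-J_n(\cdot)\|_{L^p(0,T-h;X)}\leq 2\eta\|J_n\|_{L^p(X_0)}+C_\eta\|J_n(\cdot+h)-J_n(\cdot)\|_{L^p(0,T-h;X_1)}.
\]
The Newton--Leibniz identity $J_n(t+h)-J_n(t)=\int_t^{t+h}\partial_t J_n(s)\,\mathrm{d}s$ combined with Fubini yields the $L^1_t$-translation estimate $\|J_n(\cdot+h)-J_n(\cdot)\|_{L^1(X_1)}\leq h\|\partial_t J_n\|_{L^1(X_1)}$ and the pointwise-in-$t$ bound $\|J_n(t+h)-J_n(t)\|_{X_1}\leq\|\partial_t J_n\|_{L^1(X_1)}$; interpolating via $\|g\|_{L^p}^p\leq\|g\|_{L^\infty}^{p-1}\|g\|_{L^1}$ upgrades these to an $L^p_t$-translation bound of order $h^{1/p}$ (uniformly in $n$). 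Choosing $\eta$ small and then $h$ small closes the argument.

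For part (ii), the stronger hypothesis $\partial_tJ_n\in L^p(X_1)$ with $p>1$ allows H\"older's inequality to give the uniform modulus $\|J_n(t)-J_n(s)\|_{X_1}\leq|t-s|^{1-1/p}\|\partial_tJ_n\|_{L^p(X_1)}$, whose exponent $1-1/p>0$ is crucial; combined with Ehrling and the $L^\infty(X_0)$-bound this yields equicontinuity in $C([0,T];X)$. Pointwise relative compactness in $X$ at each fixed $t$ follows from the $L^\infty(X_0)$-bound together with the compact embedding $X_0\hookrightarrow X$, after redefining the representatives on a null set using the absolute continuity into $X_1$ supplied by the H\"older estimate above. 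Arzel\`a--Ascoli then delivers the required relative compactness in $C([0,T];X)$.

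The main obstacle I expect is the time-translation estimate in part (i) when $\partial_tJ_n$ lies only in $L^1(X_1)$: the Newton--Leibniz bound naturally produces $L^1_t$-smallness of order $h$ rather than the $L^p_t$-smallness required by Riesz--Fr\'echet--Kolmogorov, so one must interpolate against the uniform $L^\infty_t$ pointwise $X_1$-bound to recover decay of order $h^{1/p}$. It is precisely this interpolation that fails in the borderline case $p=\infty$, which is why part (ii) requires the stronger hypothesis $p>1$ on $\partial_tJ$ and must be handled through Arzel\`a--Ascoli rather than through Riesz--Fr\'echet--Kolmogorov.
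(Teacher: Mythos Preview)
The paper does not prove this lemma at all: it is stated in the appendix as a standard result, with a citation to Simon \cite{jm}, and no argument is supplied. So there is no ``paper's own proof'' to compare against.

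Your sketch follows the standard Aubin--Lions--Simon route and is essentially correct, but there is one genuine gap worth flagging. In part (i) you write that ``the uniform $L^p([0,T];X)$-bound of $\{J_n\}$ is immediate \ldots\ so it suffices to verify vanishing equicontinuity of translates.'' This would be enough if $X$ were finite-dimensional, but the Riesz--Fr\'echet--Kolmogorov criterion in a Bochner space $L^p([0,T];X)$ with $X$ infinite-dimensional requires, in addition to boundedness and translation equicontinuity, a spatial compactness condition---for instance, that for every $0<t_1<t_2<T$ the set $\big\{\int_{t_1}^{t_2} J_n(t)\,\mathrm{d}t:n\in\mathbb{N}\big\}$ is relatively compact in $X$. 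This is exactly where the hypothesis that $X_0\hookrightarrow X$ is \emph{compact} enters: boundedness of $J_n$ in $L^p(X_0)$ gives boundedness of these integrals in $X_0$, hence relative compactness in $X$. Your argument as written uses the compact embedding only through Ehrling's lemma, which by itself is not enough.

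A related subtlety arises in part (ii): the $L^\infty([0,T];X_0)$ bound gives $\|J_n(t)\|_{X_0}\le C$ only for almost every $t$, and without reflexivity of $X_0$ it is not immediate that the continuous-in-$X_1$ representative takes values in a fixed bounded set of $X_0$ at \emph{every} $t$. Simon's own proof sidesteps this by first extracting a limit in $L^p([0,T];X)$ via part (i) and then upgrading to uniform convergence using the H\"older equicontinuity in $X_1$ plus Ehrling, rather than invoking Arzel\`a--Ascoli directly with pointwise $X_0$-bounds.
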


The following three lemmas contain some Sobolev
inequalities on the product estimates, the interpolation estimates,
the commutator inequality, {\it etc.},  which can be found in many
references; {\it cf.} Majda \cite{amj}.  We omit their proofs.

\begin{lemma}
\label{DZ5}
Let functions $u, v\in H^s$ and $s \geq 2$.
Then $u\cdot v\in H^s$, and there exists a constant $C_s$ depending only on $s$ such that
\[
\|u\cdot v\|_s\leq C_s\|u\|_s\|v\|_s.
\]
\end{lemma}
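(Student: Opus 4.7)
The plan is to proceed via the Leibniz rule combined with H\"older's inequality and the Gagliardo--Nirenberg interpolation inequalities collected in Lemma \ref{lem2as}. For any multi-index $\alpha$ with $|\alpha|\le s$ (where, if $s$ is fractional, one first settles the two integer cases $s=\lfloor s\rfloor$ and $s=\lceil s\rceil$ and then interpolates; but for our applications $s$ is an integer), I would start from
\[
\partial^\alpha(uv)=\sum_{\beta\le\alpha}\binom{\alpha}{\beta}\,\partial^\beta u\cdot \partial^{\alpha-\beta}v,
\]
and estimate the $L^2$-norm of each summand by a pair of $L^p$--$L^q$ norms with $\tfrac1p+\tfrac1q=\tfrac12$, chosen so that both factors can be interpolated between an $L^\infty$ norm and an $H^s$ norm of $u$ (resp.\ $v$).

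The key reduction is that, because $s\ge 2>3/2$ in dimension three, the Sobolev embedding $H^s(\mathbb{R}^3)\hookrightarrow L^\infty(\mathbb{R}^3)$ is available, so $|u|_\infty+|v|_\infty\le C\|u\|_s+C\|v\|_s$. Writing $|\beta|=k$ with $0\le k\le s$ and choosing $p=2s/k$, $q=2s/(s-k)$ when $1\le k\le s-1$, the Gagliardo--Nirenberg inequality yields
\[
\big|\partial^\beta u\big|_p\le C\,|u|_\infty^{1-k/s}\|u\|_s^{k/s},\qquad
\big|\partial^{\alpha-\beta}v\big|_q\le C\,|v|_\infty^{k/s}\|v\|_s^{1-k/s},
\]
so by H\"older each interior summand is majorized by $C\,\|u\|_s\|v\|_s$. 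The endpoints $k=0$ and $k=s$ are treated directly via $|\partial^\alpha u\cdot v|_2\le|v|_\infty|\partial^\alpha u|_2\le C\|v\|_s\|u\|_s$, and symmetrically. Summing over the finitely many $\beta\le\alpha$ and $|\alpha|\le s$ produces the desired bound with a constant $C_s$ depending only on $s$.

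The only place where any real care is needed is the choice of interpolation exponents: one must verify that $\tfrac{1}{p}+\tfrac{1}{q}=\tfrac12$ is compatible with the Gagliardo--Nirenberg scaling $\tfrac{k}{s}(\tfrac12-\tfrac1\infty)+\tfrac1\infty=\tfrac1p$ for each $k\in\{1,\dots,s-1\}$, which is a direct computation. Once that bookkeeping is in place, the estimate follows by a simple triangle inequality and the embedding $H^s\hookrightarrow L^\infty$ used in the final step to eliminate the $L^\infty$ factors. No compactness or smoothing is required; the argument is entirely elementary given Lemma \ref{lem2as}.
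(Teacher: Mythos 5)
Your argument is correct: this is the standard Moser-type product estimate via the Leibniz rule, H\"older with exponents $p=2s/k$, $q=2s/(s-k)$, the intermediate-derivative Gagliardo--Nirenberg inequalities, and the embedding $H^s(\mathbb{R}^3)\hookrightarrow L^\infty$ for $s\ge 2>3/2$, and the exponent bookkeeping you flag does check out. The paper itself omits the proof and simply cites Majda, where essentially this same argument appears; the only cosmetic caveat is that the interpolation inequalities you need are slightly more general than the two special cases recorded in Lemma \ref{lem2as}, so you are really invoking the full Gagliardo--Nirenberg family rather than that lemma alone.
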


\begin{lemma}
\label{DZ37}
Let $u\in H^s$.
Then, for any $s'\in[0, s]$, there exists a
constant $C_s$ depending only on $s$ such that
\begin{equation}
\|u\|_{s'}\leq C_s\|u\|^{1-\frac{s'}{s}}_0\|u\|^{\frac{s'}{s}}_s.
\end{equation}
\end{lemma}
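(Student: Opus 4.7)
The plan is to prove the interpolation inequality by working entirely in frequency space through the Fourier-analytic definition of the Sobolev norm, which yields the optimal constant $C_s = 1$. Letting $\hat u$ denote the Fourier transform of $u$, I use the standard characterization
\[
\|u\|_s^2 = \int_{\mathbb{R}^3}(1+|\xi|^2)^s\,|\hat u(\xi)|^2\,\dd\xi,
\]
under which the endpoint cases $s' = 0$ and $s' = s$ are trivial, so I reduce to $0 < s' < s$.

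The core step is a single application of H\"older's inequality on the positive measure $|\hat u(\xi)|^2\,\dd\xi$ with the conjugate pair $p = s/s'$ and $q = s/(s-s')$ (these are positive and their reciprocals sum to $1$ precisely because $0<s'<s$). Splitting the weight as $(1+|\xi|^2)^{s'} = \bigl((1+|\xi|^2)^s\bigr)^{s'/s}\cdot 1^{(s-s')/s}$ and integrating against $|\hat u|^2\,\dd\xi$, H\"older then produces
\[
\|u\|_{s'}^2 \leq \Bigl(\int (1+|\xi|^2)^s|\hat u|^2\,\dd\xi\Bigr)^{s'/s}\Bigl(\int |\hat u|^2\,\dd\xi\Bigr)^{(s-s')/s} = \|u\|_s^{2s'/s}\,\|u\|_0^{2(1-s'/s)}.
\]
Taking square roots gives exactly the stated inequality, in fact with $C_s = 1$.

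There is no substantive obstacle here: the result is the standard log-convexity property of the family of weighted $L^2$ norms, viewed as a function of the weight exponent. Two alternative routes one could take are (i) a Littlewood--Paley dyadic decomposition combined with H\"older on the dyadic pieces, which has the advantage of extending cleanly to Besov and Triebel--Lizorkin scales, and (ii) for integer $s$, repeated commutator/integration-by-parts estimates of the form $|\nabla^{s'} u|_2^2 \leq |u|_2^{2(1-s'/s)}\,|\nabla^s u|_2^{2s'/s}$ derived iteratively; however, the Fourier proof above is the most efficient for the statement as formulated and incurs no loss in the constant.
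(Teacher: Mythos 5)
Your proof is correct. The paper itself gives no proof of this lemma---it is listed in the appendix as a standard fact and referred to Majda's book---so there is nothing to compare against; your frequency-space argument via H\"older's inequality with exponents $s/s'$ and $s/(s-s')$ is exactly the textbook proof being cited. The only remark worth making is that your constant $C_s=1$ is tied to the Fourier characterization $\|u\|_s^2=\int(1+|\xi|^2)^s|\hat u(\xi)|^2\,{\rm d}\xi$; if the $H^s$ norm is instead taken as the sum of the $L^2$ norms of derivatives up to order $s$ (as the paper's notation suggests for integer $s$), the same inequality follows with a constant $C_s$ coming from the equivalence of the two norms, which is consistent with how the lemma is stated.
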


\begin{lemma}
\label{zhen1}
Let  $r$, $a$, and $b$  be constants such that
$$
\frac{1}{r}=\frac{1}{a}+\frac{1}{b},\qquad 1\leq a,\ b, \ r\leq \infty.
$$
Then, for any $s\geq 1$, if $f, g \in W^{s,a} \cap  W^{s,b}(\mathbb{R}^3)$,
\begin{equation}\begin{split}\label{ku11}
&\big|\nabla^s(fg)-f \nabla^s g\big|_r
\leq C_s\big(\big|\nabla f\big|_a \big|\nabla^{s-1}g\big|_b+\big|\nabla^s f\big|_b\big|g\big|_a\big),\\
\end{split}
\end{equation}
\begin{equation}\begin{split}\label{ku22}
&\big|\nabla^s(fg)-f \nabla^s g\big|_r
\leq C_s\big(\big|\nabla f\big|_a \big|\nabla^{s-1}g\big|_b+\big|\nabla^s f\big|_a\big|g\big|_b\big),
\end{split}
\end{equation}
where $C_s> 0$ is a constant depending only on $s$,
$\nabla^s f$ is the set of  all $\partial^\zeta_x f$  with $|\zeta|=s\ge 1$,
and $\zeta=(\zeta_1,\zeta_2,\zeta_3)\in \mathbb{R}^3$ is a multi-index.
\end{lemma}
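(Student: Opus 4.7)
The plan is to prove Lemma \ref{zhen1} as a classical Moser-type commutator estimate, combining the multi-index Leibniz formula with H\"older's inequality and the Gagliardo--Nirenberg interpolation recorded in Lemma \ref{lem2as}. First, I would fix a multi-index $\zeta$ with $|\zeta|=s\ge 1$ and apply the Leibniz rule to write
\[
\partial^\zeta_x(fg)-f\,\partial^\zeta_x g=\sum_{0<\eta\le\zeta}\binom{\zeta}{\eta}\,\partial^\eta_x f\cdot\partial^{\zeta-\eta}_x g,
\]
so that the cancellation on the left is traded for a finite linear combination of products in which $\partial^\eta_x f$ carries at least one derivative and $\partial^{\zeta-\eta}_x g$ carries at most $s-1$. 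The problem then reduces to estimating each product $\partial^\eta_x f\cdot \partial^{\zeta-\eta}_x g$ in $L^r(\mathbb{R}^3)$ and summing over $\eta$.

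Next, for each $k=|\eta|\in\{1,\dots,s\}$, I would apply H\"older's inequality with split $1/r=1/p_k+1/q_k$, choosing $(p_k,q_k)$ so that both factors can be interpolated against the exact endpoint norms appearing on the right-hand side of \eqref{ku11}. Setting $\theta_k=(k-1)/(s-1)$, the Gagliardo--Nirenberg inequality applied to the shifted functions $\nabla f$ and $g$ gives
\[
|\partial^\eta_x f|_{p_k}\le C\,|\nabla f|_a^{1-\theta_k}|\nabla^s f|_b^{\theta_k},\qquad |\partial^{\zeta-\eta}_x g|_{q_k}\le C\,|g|_a^{\theta_k}|\nabla^{s-1}g|_b^{1-\theta_k},
\]
where $1/p_k=(1-\theta_k)/a+\theta_k/b$ and $1/q_k=\theta_k/a+(1-\theta_k)/b$ sum, by design, to $1/a+1/b=1/r$. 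The dimensional scaling condition of Gagliardo--Nirenberg is automatically verified because $\theta_k=(k-1)/(s-1)$ is precisely the homogeneity-critical choice, in which the $\mathbb{R}^3$-scaling terms cancel. Multiplying the two interpolated bounds produces the geometric mean
\[
|\partial^\eta_x f\cdot\partial^{\zeta-\eta}_x g|_r\le C\bigl(|\nabla f|_a|\nabla^{s-1}g|_b\bigr)^{1-\theta_k}\bigl(|\nabla^s f|_b|g|_a\bigr)^{\theta_k},
\]
which by Young's inequality $X^{1-\theta_k}Y^{\theta_k}\le (1-\theta_k)X+\theta_k Y$ is dominated by the sum of the two summands on the right-hand side of \eqref{ku11}. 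Summing over $\eta$ completes the bound. The variant \eqref{ku22} follows by the same scheme with the $(a,b)$ assignments in the two interpolations swapped, so that $\partial^\eta_x f$ is interpolated between $|\nabla f|_a$ and $|\nabla^s f|_a$ while $\partial^{\zeta-\eta}_x g$ is interpolated between $|g|_b$ and $|\nabla^{s-1}g|_b$.

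The endpoints $k=1$ and $k=s$ reproduce the two summands on the right-hand side of \eqref{ku11} by H\"older's inequality alone, with no interpolation needed. The main bookkeeping obstacle will be verifying that the chosen exponents $(p_k,q_k)$ remain admissible for Gagliardo--Nirenberg for \emph{every} intermediate $k$ simultaneously, together with the degenerate endpoint cases $a=\infty$ or $b=\infty$, in which one factor is controlled directly by a single norm and no interpolation is required. Since this is a classical Moser/Kato--Ponce product estimate, no essentially new idea is involved and the argument amounts to careful exponent tracking, which is presumably why the paper omits the details and refers to Majda \cite{amj}.
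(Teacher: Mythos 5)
Your proposal is correct and is exactly the classical Moser-type commutator argument (Leibniz expansion, H\"older with interpolated exponents, Gagliardo--Nirenberg with $\theta=j/m$ so the scaling terms cancel, then Young's inequality); the paper gives no proof of this lemma at all, simply citing Majda \cite{amj}, and your argument is the standard one found there. The exponent bookkeeping you defer is routine and works out as you describe, including the endpoint cases $k=1$, $k=s$, and $a$ or $b$ equal to $\infty$.
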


The final lemma is useful to improve weak convergence to strong convergence.
\begin{lemma}
\label{zheng5}
If the function sequence $\{w_n\}^\infty_{n=1}$ converges weakly  to $w$ in an Hilbert space $X$,
then it converges strongly to $w$ in $X$ if and only if
$$
\|w\|_X \geq \lim \text{sup}_{n \rightarrow \infty} \|w_n\|_X.
$$
\end{lemma}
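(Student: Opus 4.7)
The statement is a standard characterization of strong convergence in a Hilbert space; the plan is to use the inner product structure together with the weak lower semicontinuity of the norm.

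For the forward direction, I would simply observe that strong convergence $w_n \to w$ in $X$ implies $\|w_n\|_X \to \|w\|_X$ by continuity of the norm, which in particular gives $\limsup_{n\to\infty}\|w_n\|_X = \|w\|_X$, so the desired inequality $\|w\|_X \geq \limsup_{n\to\infty}\|w_n\|_X$ holds trivially (with equality).

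For the reverse direction, the key observation is that weak convergence $w_n \rightharpoonup w$ already yields the weak lower semicontinuity bound
\begin{equation*}
\|w\|_X \leq \liminf_{n\to\infty}\|w_n\|_X,
\end{equation*}
since for any $w \neq 0$ one has $\|w\|_X = \langle w, w/\|w\|_X\rangle = \lim_{n\to\infty}\langle w_n, w/\|w\|_X\rangle \leq \liminf_{n\to\infty}\|w_n\|_X$ (and the case $w=0$ is trivial). Combining this with the hypothesis $\limsup_{n\to\infty}\|w_n\|_X \leq \|w\|_X$ forces
\begin{equation*}
\lim_{n\to\infty}\|w_n\|_X = \|w\|_X.
\end{equation*}

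Once the norms converge, I would expand the inner product using the Hilbert space identity
\begin{equation*}
\|w_n - w\|_X^2 = \|w_n\|_X^2 - 2\,\mathrm{Re}\,\langle w_n, w\rangle_X + \|w\|_X^2.
\end{equation*}
Since $w_n \rightharpoonup w$ weakly, the cross term satisfies $\langle w_n, w\rangle_X \to \langle w, w\rangle_X = \|w\|_X^2$, and since $\|w_n\|_X^2 \to \|w\|_X^2$ by the step above, the right-hand side tends to $\|w\|_X^2 - 2\|w\|_X^2 + \|w\|_X^2 = 0$. This yields $w_n \to w$ strongly in $X$.

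There is no real obstacle here; the argument is entirely elementary once one invokes the inner-product structure of $X$ and the weak lower semicontinuity of the norm. The only subtlety worth flagging in the write-up is that the hypothesis $\|w\|_X \geq \limsup \|w_n\|_X$ is actually equivalent to the convergence $\|w_n\|_X \to \|w\|_X$ once weak convergence is assumed, because the reverse inequality $\liminf \|w_n\|_X \geq \|w\|_X$ comes for free.
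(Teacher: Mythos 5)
Your proof is correct and is the standard argument (weak lower semicontinuity of the norm forces $\|w_n\|_X \to \|w\|_X$, and then the inner-product expansion of $\|w_n-w\|_X^2$ gives strong convergence); the paper states this lemma in the appendix without proof, so there is nothing to compare against beyond noting that your argument is exactly the expected one.
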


\bigskip
\bigskip
{\bf Acknowledgement:}
The research of Geng Chen was supported in part by the US National Science Foundation Grant DMS-1715012.
The research of Gui-Qiang G. Chen was supported in part by
the UK Engineering and Physical Sciences Research Council Awards
EP/L015811/1 and EP/V008854/1, and the Royal Society--Wolfson Research Merit Award (UK).
The research of Shengguo Zhu was supported in part by
the China National Natural Science Foundation under Grant 1210010137, the Royal Society--Newton International Fellowships NF170015,
the Newton International Fellowships Alumni AL/201021 and AL/211005,
the Monash University-Robert Bartnik Visiting Fellowships,
and the Shanghai Frontier Science Research Center for Modern Analysis.

\bigskip

\end{document}